	\newmdenv[innerlinewidth=0.5pt, roundcorner=4pt,linecolor=outerspace,innerleftmargin=6pt,
	innerrightmargin=6pt,innertopmargin=6pt,innerbottommargin=6pt]{mybox}
	\newtcolorbox{mycolorbox}{colback=blue,colframe=outerspace}
	\newtcolorbox{mytbox}[1]{arc=8ex,lower separated=false,coltitle=black,colback=white,colframe=gray!20!white,boxrule=1pt,leftrule=1mm,rightrule=1mm,title=#1}
\newenvironment{proof}{\paragraph{\underline{Proof} :}}{\hfill$\square$}
\newenvironment{proofth}[1]{\paragraph{\underline{#1}} :}{\hfill$\square$}
\newtheorem{theorem}{Theorem}
\newtheorem{proposition}{Proposition}
\newtheorem{lemma}{Lemma}
\newtheorem{remark}{Remark}
\newtheorem{corollary}{Corollary}
\newcommand{\A}{\mathcal{A}}
\newcommand{\B}{\mathcal{B}}
\newcommand{\D}{\mathcal{D}}
\newcommand{\I}{\mathcal{I}}
\newcommand{\E}{\mathcal{E}}
\newcommand{\N}{\mathcal{N}}
\newcommand\CC{\hbox{C\kern -.58em {\raise .54ex \hbox
			{$\scriptscriptstyle |$}}
		\kern-.55em {\raise .53ex \hbox{$\scriptscriptstyle |$}} }}
\newcommand\qd{\hfill$\sqcap\kern-8.0pt\hbox{$\sqcup$}$}
\newcommand\NN{\hbox{I\kern-.2em\hbox{N}}}
\newcommand\nn{\hbox{I\kern-.2em\hbox{N}}}
\newcommand\RR{I\!\!R}
\newcommand\sRR{{\sl \hbox{I\kern-.2em\hbox{R}}}}
\newcommand\QQ{\hbox{I\kern-.53em\hbox{p}}}
\newcommand{\argmin}{\hbox{argmin}}     
\DeclareMathOperator*{\argmax}{arg\,max}
\title{Cross-Diffusion Theory for Overcrowding Dispersal \\  in Interacting Species System }
\author{Noureddine Igbida  \thanks{Institut de recherche XLIM, UMR-CNRS 7252, Faculté des Sciences et Techniques, Université de Limoges, France. Email:  noureddine.igbida@unilim.fr}}
\date{\today}
\begin{document}
 \maketitle

 \begin{abstract}

This work introduces  a new class of cross-diffusion systems for studying   overcrowding dispersal of two species.  The approach, based on proximal minimization energy through a minimum flow process, offers a potential generalization of existing segregation models. Unlike prior methods using PDEs or $W_2$-Wasserstein flows, it establishes a well-posed PDE  framework for capturing the interplay between diffusion and concentration gradients. This framework has the potential to significantly improve our understanding of how cross-diffusion shapes spatial patterns, coexistence, and overall distribution of multiple species. Notably, for homogeneous cases, the approach definitely leads   to a well-defined PDE grounded in a new general $H^{-1}$-theory specifically developed for overcrowding dispersal. This theory provides a robust foundation for further analysis.

 \end{abstract}

\section{Introduction and preliminaries}
 
  In this study, we focus on a specific type of cross-diffusion system involving overcrowding dispersal of two species. Overcrowding dispersal in interacting species refers to the phenomenon where two or more species, when encountering high population density in a shared environment, exhibit behaviors that encourage them to spread out and reduce their local density. 
  Cross-diffusion systems have gained significant attention in mathematical modeling as they provide a more realistic description of interactions and dynamics between different species in various scientific fields, including ecology, chemistry, and biology. In this context, a cross-diffusion system captures the interplay between multiple species by considering the influence of one species on the diffusion rates of another.  It is known that  mathematical analysis and numerical simulations of these kind of problems  exhibit complex dynamics and patterns  which may be handled using  cross-diffusion system. The cross-diffusion terms in the system take into account the fact that the diffusion rates of each species are influenced not only by their own concentration gradient but also by the concentration gradient of the other species.

  Let us denote by $\rho_k(t,x)$ the concentration of each specie $k=1,2,$ l  at time $t\geq 0$ at position $x\in \Omega$, where  $\Omega\subset \RR^N $ is a given open bounded domain. Here  $N\geq 1$, which represents space dimension,  may be considered equals to $1,2$ or $3$ for practical situations.     A particular example of cross-diffusion  system we shall discuss in this paper for the couple of interacting densities $(\rho_1,\rho_2)$   is given by the balances law  for each $\rho_k,$ $k=1,2,$ 
  $$  	\partial_t \rho_k +\nabla \cdot J_k[\rho_1,\rho_2]= f_k, \quad \hbox{ in } Q:(0,T)\times \Omega,$$
   where $T>0$ is a given horizon time, $J_k[\rho_1,\rho_2]$ is the mass-flux   and $f_k$ represents  reaction term or simply  species supply. 
  To model the dispersal phenomena Gurtin et al. propose in \cite{Gurtin84} to consider the intuitive constitutive relation for each    $J_k$ given by   
 $$J_k[\rho_1,\rho_2]= \rho_k \: \upsilon _k[\rho_1,\rho_2]  ,$$
relating the   mass-flux  $J_k[\rho_1,\rho_2]  $  to  the dispersal velocity $\upsilon_k,$  which measures, among other things, the desire of individuals for each specie $k=1,2,$ to migrate.   Moreover, for the dispersal phenomena, they propose  to consider, for each $k=1,2,$ 
 $$ \upsilon_k[\rho_1,\rho_2] = -  \sigma_k  \: \nabla (\rho_1+\rho_2) ,$$
  where $0<\sigma_k$ is assumed to be  connected to the \emph{dispersivity}, and may depend on $\rho_k$ and $\rho_1+\rho_2,$ so that each specie    disperses (locally) toward lower values of total population.    
 Then, the underlying PDEs take the form of cross-diffusion system 
\begin{equation}\label{crossgen1}
	 \left\{ \begin{array}{ll}
 	\partial_t \rho_1-  \nabla\cdot(  \sigma_1 \:  \rho_1\:    \nabla (\rho_1+\rho_2)) = f_1\\   \\ 
 	\partial_t \rho_2-  \nabla\cdot(  \sigma_2 \:   \rho_2\:   \nabla (\rho_1+\rho_2))= f_2
 \end{array}\right. \quad \hbox{ in } Q . 
\end{equation}
Since the seminal paper by  \cite{Gurtin84},   the cross-dffusion system \eqref{crossgen1}   was proposed in a series of works with  $\sigma_k=1$ ; i.e 
\begin{equation}\label{crossgen2}
	\left\{ \begin{array}{ll}
		\partial_t \rho_1-  \nabla\cdot(   \rho_1  \:   \nabla (\rho_1+\rho_2)) = f_1\\   \\ 
		\partial_t \rho_2-  \nabla\cdot(   \rho_2   \:  \nabla (\rho_1+\rho_2))= f_2.
	\end{array}\right.   \quad \hbox{ in } Q 
\end{equation} subject to homogeneous Neumann boundary condition  to study segregation theory where one species dominates (see \cite{Hilhorst87,Hilhorst2012,SantambrogioCD,Perthame12}).  Following  the works in \cite{BDFS,SantambrogioCD,KM} in the   framework of gradient flow in Wasserstein space 
it is well-established that systems like   \eqref{crossgen1}  are closely linked to a minimization process of an internal energy functional acting on the total mass $\rho_2+\rho_2,$ i.e. $\int\tilde \beta(\rho_1+\rho_2)$. 
 However, existing results are often partial and heavily influenced by "stable" configurations (as defined in \cite{KM}) that avoid mixing between the species. This limitation restricts the study of these type of systems to specific spatial dimensions and  source/reaction term behaviors $f_k.$  The case where $f_k$ arises from a  drift   is concerned  as well, since the   study  become particularly complex  even in one dimensional, as explored in   \cite{KM}.    
  
Actually, while the system \eqref{crossgen1} initially appears to exhibit fast-wave propagation (hyperbolic behavior) for each individual density $\rho_k,$   it surprisingly becomes diffusion-dominated (parabolic behavior) when considering the total cell density $\rho_1+\rho_2$.    Interestingly, stable configuration (the segregation regime ; i.e. $\rho_1\: \rho_2=0$)  suggests that the system might still retain some diffusion-like behavior even for individual densities.      Nevertheless, this \emph{strong} formulation (hyperbolic) in  \eqref{crossgen2} essentially implies that the dynamics of each population are solely determined by the combined dynamics of both populations. This  seems to  imposes  a somewhat inflexible constraint on the individual dynamic of each population. The situation appears to be better from theoretical point of view  when the reaction terms $f_k$ make possible segregation phenomena (see \cite{Hilhorst87,Hilhorst2012,SantambrogioCD,Perthame12}).  The formulation is     suitable  also  when coming moreover with a diffusion terms for each populations   allowing to consider  in some sens individual  Brownian motion for each populations (cf. \cite{Laborde} for the study of crowd motion).   In this case the mass-fluxes turns into  
  $$J_k[\rho_1,\rho_2] = -  \sigma_k  \: \nabla (\rho_1+\rho_2) - \nabla \rho_k, $$ 
  so that a diffusion terms $-\Delta  \rho_k$  associated with each density $\rho_k$ enter in the system and make it more flexible from the theoretical and numerical points of view. 
  
  \medskip 
  Our approach in this paper is   different,   we follow gradient flow approach  in Wasserstein spaces, by using   the system's internal energy expressed through  a cost functional acting on the total density ; $\int_\Omega  \beta(\rho_1+\rho_2).$ Here $\beta$ is assumed to be a convex and lower semi-continuous function, which may vary spatially.   We employ a process based on the steepest descent algorithm. However, to measure the transition work, we utilize the minimal flow approach we introduced in \cite{IgGF} for single-species, as an alternative of Wasserstein distance.  Roughly speaking, for a given initial condition $(\rho_{01},\rho_{02})$  and a fixed time step $\tau>0,$ we set  $(\rho^1_0,\rho_2^0)= (\rho_{01},\rho_{02}),$   and then recursively define 
 \begin{equation}\label{Prox0}
 	  (\rho_1^{i},\rho_2^{i}):= \argmin_{(\rho_1,\rho_2)} \left\{ \frac{1}{2\tau } (\I (\rho_1,\rho_1^{i-1}) + \I (\rho_2,\rho_2^{i-1}) +  \int_\Omega \beta(\rho_1+\rho_2) \: dx  \right\} ,\quad i=1,...n, 
 \end{equation}
  where $\I (\rho_k,\rho_k^{i-1}) $ is given by the minimum flow problem associated with the densities $\rho_k$ and $\rho_k^{i-1}$ (the exact definition in Section \ref{SectionProx}).  Remember that  this minimal flow approach is well-suited for incorporating diffusion processes into dynamical systems which  naturally tend towards minimizing their internal energy (cf. \cite{IgGF}).  This framework allows us here to  recover the   concept of dispersal potentials, $\eta_k,$ which capture the individual's desire to migrate based on the local densities of both species and the total density $\rho_1+\rho_2.$    Basically, 
  we  encode the dispersal behavior using specific parameters, $\eta _k,$ to rewrite    the mass flux for each species   directly  as:
  $$J_k[\rho_1,\rho_2] = - \sigma_k  \:  \nabla   \eta _k ,\quad  \hbox{ for each }k=1,2.$$  
   This formulation leads to parabolic equations governing the dynamics of the species densities,  $\rho_k$:
   $$ \partial_t \rho_k -\nabla \cdot(\sigma_k\:  \nabla \eta_k ) = f_k , \quad   \text{ for } k=1,2. $$
    By delving deeper into the optimization process for the system's internal energy, we uncover a crucial link between the dispersal potentials, $\eta_k,$  the individual densities, $\rho_k,$    the total density, $\rho_1+\rho_2,$ and the spatial   
    rearrangements  of the densities.    This connection leads  into significant equations, providing valuable tools for analyzing the dynamic behavior of the system.  For instance, in the case where $\beta$ is differentiable, this link may be written in the following form 
     \begin{equation} \label{Bcondition1}
    	\eta_k -  \beta'(\rho_1+\rho_2) \in \partial I\!\! I_{[0,\infty)} (\rho_k) \quad  \hbox{ for each }k=1,2,    \end{equation}
    	      which is equivalent to say    (see Remark \ref{Rmqequivalentform} for different related equivalent formulations)
      \begin{equation} \label{Bcondition0}
    	\left\{ \begin{array}{ll}
    		\eta_k=   \beta'(\rho_1+\rho_2) \quad &\hbox{ in }[\rho_k>0]\\  \\    \eta_k\leq     \beta'(\rho_1+\rho_2)   &\hbox{ in }[\rho_k=0].
    	\end{array}\right. 
    \end{equation}
    In the context of the dispersal overcrowding example \eqref{crossgen2}, this novel approach introduces a new model to capture the dynamics of this phenomenon. The method utilizes a parabolic function linked to a minimum internal energy process acting on the total density, which in this case is squared; specifically, this integral is given by : $\frac{1}{2}\int_\Omega (\rho_1+\rho_2)^2.$ The approach differs from traditional methods rooted in \cite{Gurtin84} which rely on phenomenological processes to model mass fluxes. The key distinction lies in how we handle the mass flux, $J_k[\rho_1,\rho_2],$  for each species $(k=1,2).$  We propose a different treatment within their active zones and outside them. While the proposed system enables to cover the system \eqref{crossgen2}  in the segregated regions  (i.e.  $[\rho_1>0\ \&\: \rho_2=0]$ and $[\rho_1=0\ \&\: \rho_2>0]$), it fundamentally changes in the regions where neither species are present (i.e.,  $[\rho_1>0\ \&\: \rho_2>0]$). More precisely, we propose reformulating the fluxes in  \eqref{crossgen2} as follows 
   \begin{equation}  
   	 \left.  \begin{array}{ll}
   	J_k[\rho_1,\rho_2] =-(\rho_1+\rho_2)    \: 	\nabla (\rho_1+\rho_2)  - \nabla \tilde \eta_k  \\  \\   
   	\tilde \eta_k \in \partial I\!\! I_{[0,\infty)}(\rho_k)
   	\end{array}\right\}  \quad \hbox{ for }k=1,2. 
	   \end{equation}  
 Here $\tilde \eta_k$ is an unknown potential   to be determined also.  From mathematical point of view it can be viewed  as a Lagrange multiplier associated with the positive sign constraint of the density $\rho_k.$ From a physical perspective, it could be interpreted as a corrective potential which prevents the system from collapsing into an \emph{unphysical} two-phase problem (i.e., changing sign solutions). This  offers valuable significant insights into the  behavior of the system. Notably, this allows us to address well-posedness questions, such as existence and uniqueness of solutions for more general situations (see Section \ref{Sexistence}).

   \medskip 
 Additionally, although the hyperbolic formulation in equation \eqref{crossgen2} might seem appropriate for Neumann boundary conditions (conservative case), it becomes less suitable for scenarios involving dynamic behavior with Dirichlet boundary conditions, such as collective motions with distinct  ''evacuation exits''.   This is because the hyperbolic nature suggests rapid wave propagation, which might not accurately capture the dynamics of controlled outflow through a specific location like an exit.   As we will explore further, among others this presents an intriguing challenge that our approach seeks to address.     Actually, following the approach \cite{IgGF}, by incorporating the boundary condition in an adequate way in the  internal energy (see Section \ref{SectionProx}), we suggest to handle the associate boundary value problem through the general mixed  boundary conditions 
   $$\left\{ \begin{array}{ll} 
J_k[\rho_1,\rho_2]\cdot \nu = \pi_k \quad &     \hbox{ on } \Gamma_{N_k} \\  \\    
 \eta_k=g_k \quad &     \hbox{ on } \Gamma_{N_k}    \end{array}\right. 
  \quad \text{ for } k=1,2,$$  
 where, for each $k=1,2,$ $\Gamma_{N_k}$ and $\Gamma_{D_k}$ constitute partitions of the boundary $\partial\Omega,$    $\pi_k$ represents a given normal flux for each specie $k$   and $g_k$ is a given charge related to  Dirichlet boundary section (an exit for instance).

  \medskip 
 Despite the variety of   boundary conditions, the framework  enables also  to integrate  further analysis in the  study of overcrowding dispersal dynamics.  Notably, it allows to investigate the general scenario where each specie   dynamic  is governed together by their  intrinsic  overcrowding dispersal forces and external actions  acting  like transport/convection phenomena. 
 To this aim,  we  introduce the cross-diffusion system 
 \begin{equation}\label{newcd0}
 	\partial_t 	\rho _k- \nabla\cdot ( \sigma_k\: \nabla\eta _k+ \rho_k V_k)=  f _k  ,  \quad  \hbox{ for }k=1,2,\quad \hbox{ in }Q:=(0,T)\times \Omega,
 \end{equation} 
where  we require that the dispersal potentials, $\eta _k,$  depend on the local densities of both species ($\rho_1$ and $\rho_2$), as well as the total density $\rho_1+\rho_2,$  following the specified  treatment  \eqref{Bcondition1}, or equivalently, any of the formulations presented in Remark~\ref{Rmqequivalentform}. Here $V_k$ is a given drift for each specie ($k=1,2$) assumed to be in $L^\infty(Q).$
 
 \medskip
Beside the freedom the new system  demonstrates for well-posedness   questions   for describing overcrowding dispersal dynamics, the approach  establishes that in the homogeneous case ; i.e. $\pi_k=0$ and $g_k=0$, the underlying PDE falls within  a  new general $H^{-1}$-gradient flow theory   specifically developed for overcrowding dispersal. In fact, the transition works $\I(\rho_k,\rho_k^{i-1})$ in \eqref{Prox0} we consider coincides with the square of $H^{-1}$-norm  of $\rho_k-\rho_k^{i-1}$  in this specific case (see Section \ref{SH-1}).  This theoretical framework provides a robust foundation for further analysis of the system's behavior like uniqueness and large time behavior.   

 It is meaningful to mention that while  gradient flow theories in $H^{-1}$ and $W_2$-Wasserstein space share   many conceptual similarities  for single-species overcrowding, existing studies  for multi-species system  using $W_2$-Wasserstein for interacting species haven't yielded a valuable PDE framework for well-posedness.   Additionally, for specific  scenarios like segregated densities, the $W_2$-Wasserstein approach seems to align more closely with hyperbolic à la  Gurtin et al. models than the parabolic cross-diffusion system proposed here.  This suggests a need for further investigation into the connections between these approaches for cross-diffusion systems.
 
  The most related work appears to be by \cite{KM}, who explore  degenerate cross-diffusion models with different drift velocities within a $W_2$-Wasserstein framework. In this work, the authors  consider the case where the energy is a power $m$ of the total density $\rho_1+\rho_2$ with $m$ possibly equal to infinity.   While their approach shares some similarities with ours at the discrete-time  level, the complexity of handling density mixing and strong restrictions both on the drift and the space dimension limit   the applicability of their continuum limit.  This highlights the potential of our proposed approach for broader applicability in studying overcrowding dispersal dynamics.

  \medskip
 The plan of the paper is the following: in Section \ref{Smain}, we gather some notation and the assumptions and state the main results. We present the existence of a  weak solution for the problem \eqref{PDE1}.  Then, for the homogeneous case and vanishing transport; i.e. $V\equiv 0,$ we show how one can connect the problem \eqref{PDE1} to a well established $H^{-1}-$gradient flow dynamic.  In Section \ref{Smodeling},  we show how to establish  the model by using minimum flow steepest gradient descent algorithm. In Section \ref{Sexistence}, we prove the existence of a weak solution by employing steepest gradient descent algorithm and passing to the limit in the approximate solution.  Finally, in  Section \ref{SH-1}, we provide the proof of the connection of the approach with $H^{-1}-$gradient flow in   the homogeneous case.

 \section{Assumptions and main results}\label{Smain}

 Throughout the paper, we assume that $\Omega$ is bounded regular domain with Lipschitz boundary $\partial \Omega,$ which can be decomposed as follows 
\begin{equation}\label{boundarydecomp}
	 \partial \Omega=\underbrace{\Gamma_{D_1}\cup \Gamma_{D_2} }_{\Gamma_D}  \cup\underbrace{  \Gamma_{N_1} \cup \Gamma_{N_2} }_{\Gamma_N}    , 
\end{equation}  with 
  $$ \mathcal H^{N-1}(\Gamma_{D_k})>0\quad  \hbox{ and }\quad \overline \Gamma_{D_k}\cap \overline \Gamma_{N_k}=\emptyset,\quad \hbox{ for each } k=1,2.$$  
 We consider a   function  $\beta\: :\:  \Omega\times \RR  \to \RR $  a carathéodory application satisfying the following assumptions: 
   \begin{itemize} 
   	
   	\item [(H1)]     $\beta(x,0)=0$ a.e. $x\in \Omega$  and, for a.e. $x\in \Omega$,  the application 
   	$$ r \in \RR \to \beta(x,r )   \hbox{ is lower semi-continuous  (l.s.c.) and convex} $$

   	\item [(H2)]    there exists $C,\: M>0$ such that 
   	$$C\: (\vert r\vert -M)^{+2 }\leq \beta(x,r),\quad \hbox{ for any }r\in \RR .$$   
   	
   \end{itemize}  
  Moreover, we fix   $\sigma=(\sigma_1,\sigma_2)\in [L^\infty(\Omega)]^2$ such that 
\begin{equation}\label{minsigma}
	0<\min_{x\in \overline \Omega} \min(\sigma_1(x),\sigma_2(x) ) .  
\end{equation}

\bigskip   
 In line with the preliminary discussion in the introduction, we will now formally introduce and analyze the cross-diffusion system : 
     \begin{equation} \label{CDpde}
    	\left\{  \begin{array}{ll} 
   			\partial_t 	\rho _k- \nabla\cdot ( \sigma_k\: \nabla\eta _k+ \rho_k V_k)=  f _k  ,   \\  \\ 
\eta_k -\tilde \eta    \in  \partial  I\!\! I_{[0,\infty)}(\rho_k),\: \tilde  \eta \in \partial \beta(x,  \rho_1+\rho_2)   
   		\end{array}\right\} 	\quad  \hbox{ in } Q,    \quad  \hbox{ for }k=1,2 .
   \end{equation}   
Here,  $V_k$ represents a vector-valued field, and $f_k$ is a source term, assumed to be provided for both species. We consider system \eqref{CDpde} with mixed boundary conditions of the following type: 
     \begin{equation} \label{BC0}	\left\{\begin{array}{ll} 
  		(\sigma_k\:\nabla\eta_k + \rho_k V_k )\cdot \nu  =  \tilde \pi_k   & \hbox{ on }\Sigma_{N _k}\\  \\
  		\eta_k  =  g _k ,\quad    & \hbox{ on }\Sigma_{D _k},
  	\end{array}
  	\right. 	 
  \end{equation}    
 where  $g_k$ and $\tilde \pi_k$ are given boundary data, for $k=1,2,$  we precise later.      We will provide a detailed motivation  for this system in Section  \ref{Smodeling}.

\bigskip 
To begin with, for  any $1\leq p\leq \infty$ and   $\tilde\Gamma \subset \partial\Omega $ given,   let us remind here some  notations and functional sets we use throughout the paper.  
 \begin{itemize}
 	 	\item[-]  
 	$L^{ p}(\Omega)$   denotes the usual Lebesgue  spaces endowed with their  natural norms.   Unless otherwise stated, and without abuse of notation, we will use $\int_\Omega  h$  to denote the Lebesgue integral over $\Omega,$ i.e. 
 	$$\int_\Omega  h := \int_\Omega h(x)\: dx, \quad \hbox{ for any }h\in \L^1(\Omega). $$
 	Moreover,     $L^p_+(\Omega)$ denotes the set of nonegative     functions of $L^p(\Omega).$  
 
 	 	 	\item[-]  
 	$H^1(\Omega)$ denotes the usual Sobolev space endowed with its natural norm 
 	$$ \Vert z\Vert_{H^1(\Omega)} =  \left( \int_\Omega  \vert z\vert^2 +\int_\Omega  \vert \nabla z\vert^2 \right) ^{1/2}.$$
 	
 	 	 	\item[-]   $H^1_{\tilde\Gamma} (\Omega) $ denotes  the closure, in $H^1(\Omega)$,  of $\mathcal C^1(\overline \Omega)$ functions which are null on $\tilde\Gamma.$   In particular, for any 
 	$z\in H^1_{\tilde\Gamma} (\Omega),$  $\gamma(z)=0$ on $\tilde\Gamma,$  where   $\gamma $  denotes the usual trace application defined from $H^1(\Omega)\to L^2(\partial\Omega).$  
 	
 	 	 	\item[-]      $ H^{-1}_{\tilde\Gamma }(\Omega)$ denotes the dual space  of $H^1_{\tilde\Gamma }(\Omega),$ $(H^1_{\tilde\Gamma }(\Omega)'.$ We know that (see for instance \cite{IgGF}) ,  for any $f\in   H^{-1}_{\tilde\Gamma }(\Omega),$ there exists a couple  $(f_0, \overline f)\in L^2(\Omega)\times L^2(\Omega) ^N,$ such that  
 	\begin{equation}\label{decompositionH}
 		\langle f,\xi\rangle_{H^{-1}_{\tilde\Gamma }(\Omega),H^1_{\tilde\Gamma }(\Omega)}  = \int_\Omega f_0\:\xi  -   \int_\Omega \overline f\cdot \nabla \xi  ,\quad \hbox{ for any }\xi\in 
 		H^1_{\Gamma_D}(\Omega). 
 	\end{equation}
 	Without abusing, we will use the couple $(f_0,\overline f)$ to identify    $f\in 	 H^{-1}_{\tilde\Gamma }(\Omega)$ and denote the duality bracket by $\langle .,.\rangle_{\Omega} $ ; i.e.  
 	$$\langle f,\xi\rangle_{\Omega}:= \langle f,\xi\rangle_{H^{-1}_{\tilde\Gamma }(\Omega),H^1_{\tilde\Gamma }(\Omega)},\quad \forall \xi\in H^1_{\tilde\Gamma }(\Omega).$$ 
 	Notice   that the couple $(f_0,\overline f)$ is not unique.  However, the resulting dual bracket does not depend on this specific  choice.

 	 	 	\item[-]      $H^{1/2}(\tilde\Gamma)$ denotes the usual   space given by $\gamma_{/\tilde\Gamma}(H^1(\Omega)),$ where $ \gamma_{/\tilde\Gamma}$ is the trace application  restricted to $\tilde\Gamma.$  We  need to define moreover   
 	$$  H^{1/2}_{00}(\tilde\Gamma):=\Big\{  \kappa\in L^2(\tilde\Gamma)\: :\:  \exists \: \tilde \kappa\in H^1 (\Omega),\: \gamma(\tilde \kappa)=\kappa \hbox{ on }  \tilde\Gamma  \hbox{ and } \gamma(\tilde \kappa)=0 \hbox{ on }\partial \Omega \setminus \tilde \Gamma  \Big\}.$$
 That is  $f=f_0+\nabla \cdot \overline f $ in $\D'(\overline \Omega\setminus\tilde  \Gamma).$
 	Thanks to \eqref{boundarydecomp}, taking $\tilde \Gamma=\Gamma_{D_k}$ or $\tilde \Gamma=  \Gamma_{N_k},$ for $k=1,2,$  the space  $H^{1/2}_{00}(\tilde\Gamma)$  coincides with the set of     functions belonging to   $H^{1/2}(\tilde\Gamma)$   and vanishing  on the   boundary  $\partial \Omega\setminus \tilde \Gamma.$  Remember that this not automatically true for any $\tilde\Gamma \subset \partial\Omega $.   

 	 	 	\item[-]   $ H^{-1/2}(\tilde\Gamma)$ denotes the usual   (topological) dual space of   $H^{1/2}_{00} ( \tilde\Gamma),$ $\left( H^{1/2}_{00}( \tilde\Gamma) \right)'  .$   We denote the duality bracket   simply by the    formal expression $\langle \pi ,\kappa\rangle_{\tilde \Gamma}$ ; i.e. \begin{equation} \label{notation1}
 		\langle \pi ,\kappa\rangle_{\tilde \Gamma} :=  \langle\pi,\kappa \rangle_{ H^{-1/2}( \tilde\Gamma),  H^{1/2}_{00}( \tilde\Gamma)} ,\quad \forall \kappa \in H^{1/2}_{00}( \tilde\Gamma) \hbox{ and }\pi \in H^{-1/2}(\tilde\Gamma)   .
 	\end{equation}
 	 
 	 	 	\item[-]   	We can define simultaneously the space  
 	$$ H_{div}(\Omega):=\Big\{  \upsilon\in L^2(\Omega)^N \: :\:   \nabla \cdot \upsilon  \in L^2(\Omega)
 	\Big\} ,$$
 	where $\nabla \cdot \upsilon$ is taken in $\mathcal D'(\Omega).$    

 	 	 	\item[-]   For any  $\upsilon\in H_{div}(\Omega),$   the normal trace  $\upsilon\cdot \nu$ is well defined on $\tilde\Gamma$ by duality.   More precisely,   $\upsilon\cdot \nu\in H^{-1/2}(\tilde\Gamma) $,    and  
 	\begin{equation}\label{tracexpression}
 		\langle \upsilon \cdot \nu ,\kappa  \rangle_{  \tilde\Gamma}  =\int_\Omega\upsilon \cdot \nabla \tilde \kappa  \: dx +  \int_\Omega \tilde \kappa \: \nabla \cdot \upsilon\: dx, 
 	\end{equation}  
 	for any $\kappa\in  H^{1/2}(\tilde\Gamma)$ and   $\tilde \kappa \in  H^1 (\Omega)$  such that  $\tilde \kappa =\kappa$ on $\tilde\Gamma  , $    and $\tilde \kappa =0$ on  $ \partial\Omega\setminus \tilde\Gamma .$

 	When dealing with the case where $\upsilon \in L^{p'}(\Omega)$ and $-\nabla \cdot \upsilon =f \in 	 H^{-1}_{\tilde\Gamma }(\Omega),$  defining the trace of $\upsilon$ on the boundary can be tricky. In general, it might not have a well-defined meaning.  Around this, the trace $(\upsilon+\overline f)\cdot \nu$ is well defined on $\partial \Omega,$ and does not depend on   the specific choice $\overline f$   in the decomposition $f=f_0+\nabla \cdot \overline f$  given by \eqref{decompositionH}.     	Therefore, when working with mixed boundary conditions like equation (\ref{BC0}), it becomes crucial to consider the individual boundary trace of  each  $f_k$ on $\Gamma_{N_k}.$ 

 \end{itemize}

 \subsection{Existence result}\label{sectionmainresultexist}
 
 To treat \eqref{CDpde} for general source terms $f_k$ in the dual space $ H^{-1}_{\Gamma_{D_k}}(\Omega)$, $k=1,2,$ we consider the following cross-diffusion system  
  \begin{equation}
 	\label{PDEevol}
 	\left\{\begin{array}{ll} 
 		\left.  \begin{array}{ll} 
 			\partial_t 	\rho _k- \nabla\cdot ( \sigma_k\: \nabla\eta _k+ \rho_k V_k+\overline f_k)=  f _{0k}  ,  \quad  \\ 
 			\\   \eta_k     \in \partial \beta(x,  \rho_1+\rho_2)  + \partial  I\!\! I_{[0,\infty)}(\rho_k)
 		\end{array}\right\} 	\quad & \hbox{ in } Q, \\  \\
 		(\sigma_k\: \nabla\eta_k + \rho_k V_k +\overline f_k)\cdot \nu  =  \pi_k   & \hbox{ on }\Sigma_{N _k}\\  \\
 		\eta_k  =  g _k ,\quad    & \hbox{ on }\Sigma_{D _k},\\  \\ 
 		\rho_k(0)=\rho_{0k}  & \hbox{ in } \Omega, 
 	\end{array}
 	\right\}  	   \hbox{ for }k=1,2 
 \end{equation}   
%
 where 
 \begin{itemize}
 	\item $(\pi_1,\pi_2)=\pi\in  H^{-1/2}(\Gamma_{N_1}) \times H^{-1/2}(\Gamma_{N_2})  $  
 	\item  $ (g_1,g_2)=g\in  H^{1/2}_{00}(\Gamma_{D_1}) \times H^{1/2}_{00}(\Gamma_{D_2})  $
 	 	\item $(f_1,f_2)=f\in L^2(0,T;  H^{-1}_{ \Gamma_{D_1} }(\Omega)\times H^{-1}_{ \Gamma_{D_2} }(\Omega)) $, where each $f_k$ is identified with the couple     $ (f_{0k},\overline f_k)   \in L^{2}(Q)\times L^{2}(Q) ^N,$ for each $k=1,2,$  as given by \eqref{decompositionH}.
 	\item $ (V_1,V_2)=V\in L^{\infty}(Q)^N\times L^{\infty}(Q) ^N $ 
 	\item  $  (\rho_{01},\rho_{02}) =\rho_0 \in L^{2}(Q)\times L^{2}(Q), $ assumed to be such that 
 	\begin{equation}\label{Condrho0}
 		\int_\Omega \beta(.,\rho_{01}+\rho_{02})<\infty.
 	\end{equation}    
 \end{itemize}
 
 \bigskip 
  Our first main result concerns existence of weak solution to \eqref{PDEevol}.

 \begin{theorem} \label{texistuniq} Under the previous assumptions,   the system of PDE  \eqref{PDEevol}  
 	has a solution $(\rho,\eta)$ in  the sense that,        for each $k=1,2,$  $\rho_k \in L^2_+(Q) \cap  W^{1,2}(0,T;H^{-1}_{\Gamma_{D_k}}(\Omega)),$  $\rho_{k}(0)=\rho_{0k},$ $\eta_k\in L^2(0,T;H^1 (\Omega)),$  $\eta_k=g_k$ on $\Gamma_{D_k},$     	
 	\begin{equation} \label{Stateequations}
 	    \eta_1\vee\eta_2 =:\tilde\eta \in \partial \beta(x,   \rho_1+ \rho_2),\: \quad  \eta_k -\tilde \eta    \in  \partial  I\!\! I_{[0,\infty)}(\rho_k),   \quad \hbox{ a.e. in }Q, 
 	\end{equation} 
 	and    
 	\begin{equation}\label{weakform}
 		\frac{d}{dt}\int_\Omega  \rho _k\: \xi _k    + \int_\Omega (\sigma_k\: \nabla\eta_k+\rho\: V_k+\overline f_k) \cdot \nabla \xi_k  =  \int_\Omega f_{0k}\: \xi_k   +\langle  \pi_k, \xi_k\rangle_{\Gamma_{N_k}}  ,\quad     \hbox{ in }\D('0,T) 
 	\end{equation}  
 	for any $  (\xi_1,\xi_2) \in H^1_{\Gamma_{D_1} } (\Omega) \times  H^1_{\Gamma_{D_2} } (\Omega).$   
 \end{theorem}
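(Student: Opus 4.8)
The plan is to realize the solution as the limit of the minimizing-movement (JKO-type) scheme \eqref{Prox0}, so the proof splits into three stages: (i) solvability and optimality of each proximal step, (ii) $\tau$-uniform a priori estimates, and (iii) passage to the limit, the last carrying the only genuine difficulty.

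For stage (i), fix $\tau=T/n$ and, starting from $(\rho_1^0,\rho_2^0)=(\rho_{01},\rho_{02})$, define $(\rho_1^i,\rho_2^i)$ as a minimizer of the functional in \eqref{Prox0}, augmented by the affine contributions of the data $(f_{0k},\overline f_k,\pi_k,g_k)$ and by an explicit (frozen from the previous iterate) treatment of the transport flux $\rho_k V_k$, consistently with the minimum-flow formulation of \cite{IgGF}. Existence of a minimizer follows from the direct method: the map $(\rho_1,\rho_2)\mapsto\I(\rho_1,\rho_1^{i-1})+\I(\rho_2,\rho_2^{i-1})+\int_\Omega\beta(x,\rho_1+\rho_2)$ is convex, lower semicontinuous by (H1) and the l.s.c.\ of the minimum-flow cost, and coercive in $L^2(Q)$ thanks to the quadratic lower bound (H2) together with the nonnegativity constraint encoded in $\partial I\!\! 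I_{[0,\infty)}$. Writing the Euler--Lagrange inclusion --- differentiating $\I(\cdot,\rho_k^{i-1})$ through the duality of \cite{IgGF}, which produces the dispersal potential $\eta_k^i\in H^1(\Omega)$ with $\eta_k^i=g_k$ on $\Gamma_{D_k}$ --- gives the discrete state relation $(\eta_1^i,\eta_2^i)\in\partial\Psi(\rho_1^i,\rho_2^i)$, where $\Psi(\rho_1,\rho_2):=\int_\Omega\beta(x,\rho_1+\rho_2)+I\!\! I_{[0,\infty)}(\rho_1)+I\!\! I_{[0,\infty)}(\rho_2)$, together with the discrete balance law, i.e.\ the weak form \eqref{weakform} with $\partial_t\rho_k$ replaced by $(\rho_k^i-\rho_k^{i-1})/\tau$.

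For stage (ii), I would test the discrete balance law by $\eta_k^i-G_k$, where $G_k\in H^1(\Omega)$ is a fixed lift of $g_k$, sum over $i$, and exploit the subgradient (discrete chain-rule) inequality $\langle\eta^i,\rho^i-\rho^{i-1}\rangle\ge\Psi(\rho^i)-\Psi(\rho^{i-1})$ afforded by $(\eta^i)\in\partial\Psi(\rho^i)$. Telescoping and using $\Psi\ge0$ (since $\beta\ge0$ by (H2)) and $\Psi(\rho^0)<\infty$ by \eqref{Condrho0}, the coercivity \eqref{minsigma} of $\sigma_k$ absorbs the leading dissipation while the drift, source and boundary contributions are controlled by Young's inequality and a discrete Grönwall argument (using $V_k\in L^\infty(Q)$). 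This yields, uniformly in $\tau$: a bound on $\rho_k$ in $L^\infty(0,T;L^2(\Omega))$ (via (H2) and $0\le\rho_k\le\rho_1+\rho_2$), a bound on $\eta_k$ in $L^2(0,T;H^1(\Omega))$ (using \eqref{minsigma}, $\eta_k=g_k$ on $\Gamma_{D_k}$, $\mathcal H^{N-1}(\Gamma_{D_k})>0$ and Poincaré on $H^1_{\Gamma_{D_k}}(\Omega)$), and --- reading the balance law backwards --- a bound on $\partial_t\rho_k$ in $L^2(0,T;H^{-1}_{\Gamma_{D_k}}(\Omega))$.

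Stage (iii) is where I expect the main obstacle. Introducing the piecewise-constant and piecewise-affine interpolants, I would extract subsequences with $\rho_k^\tau\rightharpoonup\rho_k$ in $L^2(Q)$ and in $W^{1,2}(0,T;H^{-1}_{\Gamma_{D_k}})$, and $\eta_k^\tau\rightharpoonup\eta_k$ in $L^2(0,T;H^1)$; since $L^2(\Omega)\hookrightarrow H^{-1}_{\Gamma_{D_k}}(\Omega)$ is compact, Aubin--Lions--Simon upgrades this to strong convergence $\rho_k^\tau\to\rho_k$ in $L^2(0,T;H^{-1}_{\Gamma_{D_k}})$. The linear terms of \eqref{weakform} then pass to the limit by weak convergence (the drift term as $\rho_k^\tau V_k\rightharpoonup\rho_k V_k$ in $L^2(Q)$, since $V_k$ is a fixed $L^\infty$ multiplier), and $\rho_k(0)=\rho_{0k}$ follows from $W^{1,2}(0,T;H^{-1})\hookrightarrow C([0,T];H^{-1})$. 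The crux is recovering the state relation \eqref{Stateequations}, i.e.\ $(\eta_1,\eta_2)\in\partial\Psi(\rho_1,\rho_2)$, in the limit. Here I would use the maximal monotonicity of $\partial\Psi$ together with the pairing identity $\int_Q\eta_k^\tau\rho_k^\tau\to\int_Q\eta_k\rho_k$, which holds because $\rho_k^\tau\to\rho_k$ strongly in $L^2(0,T;H^{-1}_{\Gamma_{D_k}})$ while $\eta_k^\tau-G_k\rightharpoonup\eta_k-G_k$ weakly in $L^2(0,T;H^1_{\Gamma_{D_k}})$ (strong--weak convergence in the dual pairing). The Minty-type criterion $\limsup_\tau\int_Q(\eta_1^\tau\rho_1^\tau+\eta_2^\tau\rho_2^\tau)\le\int_Q(\eta_1\rho_1+\eta_2\rho_2)$ is therefore satisfied (in fact with equality), which closes the graph and delivers the inclusions of \eqref{Stateequations} a.e.\ in $Q$. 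The last delicate point is to identify the common multiplier $\tilde\eta\in\partial\beta(x,\rho_1+\rho_2)$ with the selection $\eta_1\vee\eta_2$: this is immediate on the set where at least one density is positive (there $\eta_k=\tilde\eta$ by the complementarity $(\eta_k-\tilde\eta)\rho_k=0$), while on the coincidence/vacuum set $[\rho_1=\rho_2=0]$ it requires the additional information carried by the elliptic structure of the minimum-flow potentials, which pins down $\eta_k$ beyond the monotone inclusion alone.
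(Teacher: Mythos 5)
Your proposal is correct and follows the same overall strategy as the paper: the minimizing-movement scheme with the minimum-flow transition cost and frozen drift, the Euler--Lagrange/duality step producing the discrete potentials $\eta_k^i$ (this is the paper's Theorem \ref{tduality1}), the energy estimate obtained by testing with $\eta_k^i-\tilde g_k$ and telescoping the subgradient inequality, the discrete Gr\"onwall bound, and the weak compactness of the interpolants. The one place where you genuinely diverge is the closure of the nonlinear state relation. You close the \emph{vector} graph $\partial\B(x,\cdot)$ by the Minty trick, using the strong convergence of $\rho_k^\tau$ in $L^2(0,T;H^{-1}_{\Gamma_{D_k}})$ (Aubin--Lions--Simon) paired against the weak $L^2(0,T;H^1)$ convergence of $\eta_k^\tau$ to get $\int_Q\eta_k^\tau\rho_k^\tau\to\int_Q\eta_k\rho_k$; the paper instead invokes the compensated-compactness/weak Aubin lemmas of \cite{Moussa,ACM} to pass to the limit in the \emph{scalar} relation $\eta_1^\tau\vee\eta_2^\tau\in\partial\beta(\cdot,\rho_1^\tau+\rho_2^\tau)$, which forces it to deal separately with the fact that the weak limit $\overline\eta$ of $\eta_1^\tau\vee\eta_2^\tau$ need not be $\eta_1\vee\eta_2$ (it only satisfies $\eta_1\vee\eta_2\le\overline\eta$), and to argue on $[\rho_1+\rho_2>0]$ and on the vacuum set via the interval structure of $\partial\beta(x,0)$. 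Your route buys a cleaner final step: once the Minty argument delivers $(\eta_1,\eta_2)\in\partial\B(x,(\rho_1,\rho_2))$ a.e., the equivalences in the paper's Proposition~1 \emph{are} exactly the state equations \eqref{Stateequations} with $\tilde\eta=\eta_1\vee\eta_2$, including on $[\rho_1=\rho_2=0]$ where the inclusion reduces to $\eta_1\vee\eta_2\in\partial\beta(x,0)$. So the residual difficulty you flag in your last sentence --- needing extra elliptic information to pin down $\eta_k$ on the vacuum set --- does not actually arise in your approach; no information beyond the monotone inclusion is required.
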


 \bigskip 
 
 While a detailed improvement  will be provided in Section~\ref{Smodeling}, we observe  here that the connection between $\rho$ and $\eta $ in   \eqref{Stateequations} hinges essentially on the graph $\partial \B(x,.),$   where for a.e. $x\in \Omega,$ $\B(x,.) \: :\:  \RR^2 \to \RR^+$ is  the application given by 
 $$ \B(x,r)=\left\{ \begin{array}{ll} 
 	\beta(x,r_1+r_2)\quad &\hbox{ if } r_1,\: r_2\geq 0 \\  \\ 
 	+\infty &\hbox{ otherwise} .\end{array} \right. $$  
Remember that $d\in \partial \B(x,r)$   if and only if  $d,\: r\in \RR^2$ and $d\cdot (s-r)\leq \B(x,s)-  \B(x,r),$ for any $s\in \RR^2.$   Working with $ \max_{m\in \RR^2} (m\cdot  q-\beta(x,m)) =:\beta^*(x,q),$   the so called Legendre transform of $\beta(x,.),$   we have 
  
  \begin{proposition}
For a.e. $x\in \Omega,$ $\partial \B(x,.)$ defines a maximal montone graph in $\RR^2\times \RR^2.$ Moreover, for any  $d=(d_1,d_2)$ and $r=(r_1,r_2)\in \RR^2$    we have 
  		\begin{eqnarray*} \label{equivBeta}
  		d\in \partial \B(x,r)\quad
  		&\Leftrightarrow & 	(r_1,r_2)\in \argmax_{s_1,s_2\geq 0 } \Big\{  s_1d_1 + s_2d_2-\beta(x,s_1  +s_2)\Big \}   \\  \\
  		  		&\Leftrightarrow & 		r_1,r_2\geq 0,\: 	  r_1d_1 + r_2d_2-\beta(x,r_1  +r_2)=  \beta^*\left(x,d_1\vee d_2\right)  \\  \\    &\Leftrightarrow&  \quad 		\left\{  \begin{array}{ll} 
  		  			r_1,r_2\geq 0,\: 	d_1\vee d_2 \in \partial\beta \left(x, r_1+r_2\right)   \\ \\  
  		  			r_1\left(d_2-d_1\right)^+ = r_2\left(d_1-d_2\right)^+=0 
  		  			\end{array}\right.  
  		  			 \\  \\ 
  		  			&\Leftrightarrow &    d_1\vee d_2  \in \partial \beta(x,r_1  +r_2)+ ,\: d_k-d_1\vee d_2  \in\partial I\!\! I_{[0,\infty)}(\rho_k)  ,\quad \hbox{ for each }k=1,2.
  	\end{eqnarray*}   
  \end{proposition}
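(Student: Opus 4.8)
The plan is to recognise $\B(x,\cdot)$ as a proper convex lower semicontinuous function on $\RR^2$ and to read every equivalence off Fenchel--Young duality together with one explicit one--dimensional reduction. First I would check that $\B(x,\cdot)$ is proper, convex and l.s.c.: it is proper since $\B(x,0)=\beta(x,0)=0<\infty$ by (H1); writing $\B(x,r)=\beta(x,r_1+r_2)+I\!\! I_{[0,\infty)^2}(r)$ exhibits it as the sum of the composition of the convex l.s.c. map $\beta(x,\cdot)$ with the linear form $r\mapsto r_1+r_2$ and the indicator of the closed convex cone $[0,\infty)^2$, so both summands are convex and l.s.c. and hence so is $\B(x,\cdot)$. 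Maximal monotonicity of $\partial\B(x,\cdot)$ then follows at once from the classical theorem of Rockafellar: the subdifferential of a proper convex l.s.c. function on a finite--dimensional space is a maximal monotone operator.

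For the chain of equivalences I would work through the conjugate function. The first equivalence is precisely the subgradient relation in variational form: $d\in\partial\B(x,r)$ holds iff the Fenchel--Young equality $\B(x,r)+\B^*(x,d)=d\cdot r$ holds, i.e. iff $r$ realises the supremum defining $\B^*(x,d)$; since $\B(x,\cdot)=+\infty$ off the first quadrant, this supremum is the constrained one $\max_{s_1,s_2\geq 0}\{s_1 d_1+s_2 d_2-\beta(x,s_1+s_2)\}$, which is the $\argmax$ in the statement. The decisive computation is the evaluation of this maximum. I would substitute $t=s_1+s_2\geq 0$ and split the maximisation: for fixed total mass $t$ the inner linear problem $\max\{s_1 d_1+s_2 d_2 : s_1+s_2=t,\ s_1,s_2\geq 0\}$ has value $t\,(d_1\vee d_2)$, attained by loading all the mass onto the larger of $d_1,d_2$. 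The outer maximisation over $t\geq 0$ then yields $\max_{t\geq 0}\{t\,(d_1\vee d_2)-\beta(x,t)\}=\beta^*(x,d_1\vee d_2)$, coercivity (H2) guaranteeing that this maximum is attained and finite. This identifies $\B^*(x,d)=\beta^*(x,d_1\vee d_2)$ and gives the second equivalence: $r$ is a maximiser iff it lies in the quadrant and the objective attains the value $\beta^*(x,d_1\vee d_2)$.

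The passage to the third, coupled, form I would obtain by separating the two optimality conditions hidden in that equality. Achieving the inner (allocation) optimum forces the mass onto the coordinate with the larger multiplier, which is exactly the complementarity pair $r_1(d_2-d_1)^+=r_2(d_1-d_2)^+=0$; achieving the outer (total--mass) optimum at $t=r_1+r_2$ is, by the scalar Fenchel--Young relation for $\beta(x,\cdot)$, the inclusion $d_1\vee d_2\in\partial\beta(x,r_1+r_2)$. Finally I would rewrite this system in the intrinsic last form: since $d_k\leq d_1\vee d_2$ for $k=1,2$, the quantity $d_k-d_1\vee d_2$ is nonpositive, and nonpositivity together with the complementarity $r_k\,(d_1\vee d_2-d_k)=0$ is exactly the statement $d_k-d_1\vee d_2\in\partial I\!\! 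I_{[0,\infty)}(r_k)$, because $\partial I\!\! I_{[0,\infty)}(s)$ equals $\{0\}$ for $s>0$ and $(-\infty,0]$ for $s=0$.

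The step I expect to be the genuine obstacle is the identification $\B^*(x,d)=\beta^*(x,d_1\vee d_2)$ at the boundary of the admissible cone, namely at $r=0$ (equivalently $r_1+r_2=0$). There one must ensure that the constrained transform $\max_{t\geq 0}$ really coincides with $\beta^*(x,d_1\vee d_2)$ even when $d_1\vee d_2<0$, and that the boundary first--order condition reads $d_1\vee d_2\in\partial\beta(x,0)$ rather than the interior equation $d_1\vee d_2=\beta'(x,0)$. This rests on $0$ being the global minimiser of $\beta(x,\cdot)$ (a consequence of $\beta(x,0)=0$ and the sign in (H2)) and on reading $\beta^*$ as the Legendre transform of $\beta(x,\cdot)$ restricted to the physically relevant half--line $[0,\infty)$, so that $\partial\beta(x,0)$ is the half--line $(-\infty,\beta'_+(x,0)]$; only with this reading does the identity survive on the boundary. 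Everything else is a routine unwinding of convex duality, and the measurable dependence on $x$ plays no role beyond the Carathéodory hypothesis already granted in (H1).
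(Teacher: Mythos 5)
Your proposal is correct and follows essentially the same route as the paper: maximal monotonicity from convexity and lower semicontinuity of $\B(x,\cdot)$, the first two equivalences from the Fenchel--Young equality, and the remaining ones from the identification $\B^*(x,d)=\beta^*(x,d_1\vee d_2)$ together with the characterization of when equality holds (your two-stage maximization over the allocation $s_1+s_2=t$ and then over $t$ is exactly the paper's inequality chain $s_1d_1+s_2d_2\leq (s_1+s_2)\,d_1\vee d_2\leq \beta(x,s_1+s_2)+\beta^*(x,d_1\vee d_2)$ read as two successive optimality conditions). Your closing remark about the boundary case $r_1+r_2=0$ and the reading of $\beta^*$ on the half-line is a legitimate subtlety that the paper passes over silently, so flagging it is a point in your favor rather than a gap.
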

  \begin{proof}  	It is clear that, for a.e. $x\in \Omega,$  	   $\B(x,.)$ is   convex and l.s.c, so that   $\partial \B(x,.)$ is a maximal monotone graph in $\RR^2\times \RR^2.$ Remember that,   $d\in \partial \B(x,r)$ is equivalent to say $r_1,\: r_2\: \geq 0$ and 
  	\begin{eqnarray*}
  		r_1d_1 + r_2d_2-	\underbrace{ \beta(x,r_1+r_2)}_{\B(x,r)} = \underbrace{ \max_{s_1,s_2\geq 0 } \Big\{  s_1d_1 + s_2d_2  -\beta(s_1+s_2  )\Big\} }_{=:\B^*(x,d)} ,
  	\end{eqnarray*}
  	where $\B^*(x,.)$ denotes the Fenchel conjugate of $\B(x,.).$  This implies the first two equivalence assertions. The remaining parts follows readily from the combination of two facts:  the inequality   
  	$$s_1d_1 + s_2d_2 \leq  (s_1+s_2)\:  d_1\vee d_2 \leq  \beta(x,s_1+s_2) +\beta^*\left(x,  d_1\vee d_2\right), $$
  holds to be 	true for any $d_1,\: d_2\in \RR$ and $s_1,s_2\in \RR^+,$ and    the equality   ;  i.e. $r_1d_1 + r_2d_2 =  \beta(x,s_1+s_2) +\beta^*\left(x,  d_1\vee d_2\right),$   holds  to be true  for $r_1,r_2\geq 0$ if and only if    
  	\begin{equation}
  	r_1,r_2\geq 0,\:  	r_1\left( d_2-d_1\vee d_2\right)^+= 	r_2\left( d_1-d_1\vee d_2\right)^+ =  0  \hbox{ and } d_1\vee d_2   \in  \partial\beta \left(x, r_1+r_2\right)   .  
  	\end{equation}

  \end{proof}

 \begin{remark} \label{Rmqequivalentform}
 	To illustrate the relationships between the densities $\rho_1,\: \rho_2,$  and the potentials $\eta_1,\: \eta_2,$  let us explore potential  behaviors of the graph $\partial \B(x,.)$ in $\RR^2 \times \RR^2.$  We present some explicit formulations  of $\partial \B$ and its inverse $(\partial \B(x,.))^{-1},$ which coincides with  $\partial \B^*(x,.). $    
 	\begin{enumerate} 
 	\item 	 For a.e. $x\in \Omega,$ $\partial \B(.,r)$    
 			$$\partial \B(x,r) =  \left\{\begin{array}{ll} 
 				\Big	\{ (d_1,d_2) \: :\:    d_1\vee d_2 \in  \partial \beta(x,0) \Big \}   \quad  & \hbox{ if } r_1=r_2=0\\   
 				\Big	\{(d_1,t)\: :\:  d_1\in \partial \beta_1(x,r_1), \:  t\leq d_1   \Big\} &  \hbox{ if } r_1>0 \hbox{ and } r_2=0\\   
 				\Big	\{( t ,d_2)\: :\:  d_2\in \partial \beta(x,r_2) , \:  t\leq d_2    \Big  \} &  \hbox{ if } r_1=0 \hbox{ and } r_2>0\\  
 				\Big	\{ (t,t) \: :\:   t\in \partial \beta(x,r_1+r_2) \Big \}  &  \hbox{ if } r_1>0 \hbox{ and } r_2>0 ,
 			\end{array}\right. \quad \forall r\in \RR^+\times \RR^+$$ 
  and 
 			$$\partial \B^*(x,d) =  \left\{\begin{array}{ll} 
 				\Big	\{ (0,t) \: :\:   t\in     \partial \beta^*(x,d_2) \Big \}   \quad  & \hbox{ if } d_1<d_2\\   
 				\Big	\{ (t,0) \: :\:   t\in     \partial \beta^*(x,d_1) \Big \}   \quad  & \hbox{ if } d_1>d_2\\  
 				\Big	\{ (t_1,t_2) \: :\:  t_1\geq 0,\: t_2\geq 0,\: t_1+t_2\in     \partial \beta^*(d_1) \Big \}   \quad  & \hbox{ if } d_1=d_2\\  
 			\end{array}\right. \quad \forall d\in \RR\times \RR.$$

 			\item In the case where $\beta(x,.)$  (resp. $\beta_1^*(x,.)$) is differentiable, 	 for a.e. $x\in \Omega,$ we have 
 			$$\partial \B (x,r) =  \left\{\begin{array}{ll} 
 				\Big	\{ (d_1,d_2) \: :\:    d_1\vee d_2 =0 \Big \}   \quad  & \hbox{ if } r_1=r_2=0\\   
 				\Big	\{(\beta'(x,r_1),t)\: :\:     t\leq \beta'(x,r_1)    \Big\} &  \hbox{ if } r_1>0 \hbox{ and } r_2=0\\  
 				\Big	\{( t ,\beta'(x,r_2))\: :\:    t\leq \beta'(x,r_2)  \Big  \} &  \hbox{ if } r_1=0 \hbox{ and } r_2>0\\   
 				\Big (\beta'(r_1+r_2),\beta'(x,r_1+r_2)\Big)    &  \hbox{ if } r_1>0 \hbox{ and } r_2>0 ,
 			\end{array}\right.\quad \forall r\in \RR^+\times \RR^+ $$   		
 		and 
 			$$\partial \B^*(x,d) =  \left\{\begin{array}{ll} 
 				(0,(\beta^*)'(x,d_2)   \quad  & \hbox{ if } d_1<d_2\\   
 				((\beta^*)'(x,d_1),0)   \quad  & \hbox{ if } d_1>d_2\\  
 				\Big	\{ (t_1,t_2) \: :\:  t_1\geq 0,\: t_2\geq 0,\: t_1+t_2=        (\beta^*)'(x,d_1)  \quad  & \hbox{ if } d_1=d_2\\   
 			\end{array}\right.\quad \forall d\in \RR \times \RR  .$$

 				\item For instance in the case where $\beta(r)=\frac{1}{m+1} r^{m+1}$ (porous medium equation)	 $\partial \B(r) $    may be defined by  
 			$$\partial \B(r) =  \left\{\begin{array}{ll} 
 				\Big	\{ (d_1,d_2) \: :\:    d_1\vee d_2 =0 \Big \}   \quad  & \hbox{ if } r_1=r_2=0\\   
 				\Big	\{(r_1^m,t)\: :\:     t\leq r_1^m   \Big\} &  \hbox{ if } r_1>0 \hbox{ and } r_2=0\\  
 				\Big	\{( t ,r_2^m)\: :\:    t\leq r_2^m \Big  \} &  \hbox{ if } r_1=0 \hbox{ and } r_2>0\\   
 				\Big ( (r_1+r_2)^m, (r_1+r_2)^m\Big)    &  \hbox{ if } r_1>0 \hbox{ and } r_2>0 ,
 			\end{array}\right.\quad \forall r\in \RR^+\times \RR^+ .$$

 				\item In the case where $\beta=I\!\!I_{[0,1]}$, which corresponds to crowed motion models, 	 $\partial \B(r) $    may be defined by  
 			$$\partial \B(r) =  \left\{\begin{array}{ll} 
 				\Big	\{ (d_1,d_2) \: :\:    d_1\vee d_2 =0 \Big \}   \quad  & \hbox{ if } r_1=r_2=0\\   
 				\Big	\{(\eta_1,t)\: :\:     t\leq \eta_1,\: \eta_1\in \partial I\!\!I_{[0,1]} (r_1)  \Big\} &  \hbox{ if } r_1>0 \hbox{ and } r_2=0\\  
 				\Big	\{( t ,\eta_2)\: :\:    t\leq \eta_2,\: \eta_2\in \partial I\!\!I_{[0,1]} (r_2)  \Big  \} &  \hbox{ if } r_1=0 \hbox{ and } r_2>0\\   
 			\Big	\{( t ,t)\: :\:    \: t\in \partial I\!\!I_{[0,1]} (r_1+r_2)   \Big  \}    &  \hbox{ if } r_1>0 \hbox{ and } r_2>0 ,
 			\end{array}\right.\quad \forall r\in \RR^+\times \RR^+ .$$

%
 				%
 				%
 			%
 			%
 			%
 			%
 			%
 			%
 			%
 			%
 			%
 			
 		\end{enumerate}

 \end{remark}

 \begin{remark}\label{RspatialR}
Assume for instance that  $\overline f$ is equal to $0,$ and $\beta$ is differentiable.  On sees  that the dynamic of the system \eqref{PDEevol}   splits formally into three driver regions, $S_1:=[\rho_1>0\: \: \&\:\:  \rho_2=0]$, $S_2:=[\rho_1=0\: \: \&\: \: \rho_2>0]$ and $S:=[\rho_1>0\: \: \&\: \: \rho_2>0]$ each exhibiting specific behavior.  Assuming they are sufficiently regular, we can formally observe the following behavior  

  	\begin{minipage}[t]{0.49\textwidth} \vspace{0pt}
 		\begin{mytbox}{$${S_1=[\rho_1>0\: \&\: \rho_2=0]}$$}
 			$\left\{ \begin{array}{lll} 
 				\partial_t \rho_1- \nabla\cdot ( \sigma_1\: \nabla \beta'(\rho_1)+ \rho_1V_1)=f_1 \\ \\ 
 				- \nabla \cdot(  \sigma_2\:\nabla  \eta_2)=f_2,\:   \eta_2 \leq   \beta'(\rho_1) \end{array} \right. $
 		\end{mytbox}
 	\end{minipage} 
 	\begin{minipage}[t]{0.49\textwidth} \vspace{0pt}
 		\begin{mytbox}{$${S_2= [ \rho_1=0\: \&\: \rho_2>0 ]}$$}
 			$\left\{\begin{array}{lll} 
 				-\nabla \cdot( \sigma_1\:  \nabla \eta_1)=f_1,\:  \eta_1 \leq  \beta'(\rho_2)\\  \\ 
 				\partial_t  	\rho_2- \nabla\cdot (   \sigma_2\: \nabla \beta'(  \rho_2)+ \rho_2V_2) =f_2 	\end{array}  \right. $
 		\end{mytbox}
 	\end{minipage}  
 	
 	\centerline{	\begin{minipage}[t]{0.6\textwidth} \vspace{0pt}
 			\begin{mytbox}{$${S= [\rho_1>0\:  \& \:  \rho_2>0]}$$}
 				$\left\{\begin{array}{lll} 
 					\partial_t 	\rho_1- \nabla\cdot ( \sigma_1\: \nabla \beta'(\rho_1+\rho_2)+ \rho_1V_1)  =f_1	\\  \\  
 					\partial_t 	\rho_2-   \nabla\cdot (   \sigma_2\: \nabla \beta'(\rho_1+\rho_2)+ \rho_2V_2) =f_2	\end{array} \right.$
 			\end{mytbox}
 	\end{minipage} }
 
 	The system in equation \eqref{PDEevol} exhibits similarities to the commonly used system in the literature shown in 
 	\begin{equation} \label{PDEevol0}
 		\left\{ \begin{array}{ll}
 			\partial_t \rho_1-   \nabla\cdot(   \sigma_1\:   \rho_1  \:  \nabla \varphi' (\rho_1+\rho_2)+\rho_1  \:V_1) = f_1\\   \\ 
 			\partial_t \rho_2-  \nabla\cdot(     \sigma_2\:   \rho_2   \: \nabla \varphi' (\rho_1+\rho_2)+\rho_1  \: V_1) = f_2 .
 		\end{array}\right.  
 	\end{equation} 
 	Both systems share some  similar behaviors in the segregated regions, $S_1,$ and $S_2$ as long as the function $\varphi$  satisfies $\beta(r)=r\: \varphi(r)-r,$ for any $r\in \RR.$  However, there are key differences. For instance :\\
 	- \underline {Compatibility with source/reaction terms:} Equation \eqref{PDEevol} appears to be more compatible and adaptable to include more possibilities for source/reaction terms compared to \eqref{PDEevol0}, even in segregated regions. \\
 	- \underline{Parabolic nature:} In the aggregated region, $S,$ the systems diverge significantly. While  \eqref{PDEevol} exhibits a fully parabolic regime with respect to all three variables $\rho_1,$ $\rho_2$ and $\rho_1+\rho_2,$   equation \eqref{PDEevol0} is only parabolic with respect $\rho_1+\rho_2.$  \\
 	 	- \underline{Correlation of solutions in the segregation regime:}  We observe that if a solution $(\rho,\eta)$  to  \eqref{PDEevol} satisfies the condition $[\rho_1\rho_2\neq 0]=\emptyset,$ (i.e., purely segregation regime), then it is also a weak solution to  \eqref{PDEevol0}. However, it is not yet clear if the converse implication holds true in general. 	The existence of a segregation regime for the system described by Equation~\eqref{PDEevol} is an intriguing question that we do not address in this paper.

%
%
 	 
 \end{remark}

  \begin{remark}
  	Since we're only interested in nonnegative solutions, the behavior of the function $\beta(x,r)$ for negative values of $r$    is irrelevant. We can assume that $\beta(x,r)=0$ for any $r\leq 0.$  As a consequence, for any $s\in \partial \beta(x,r),$ we have  $s\geq 0.$   This nonnegativity property has a crucial implication for solutions of equation (\ref{PDEevol}). Specifically, for any solution $(\rho,\eta)$   we can systematically conclude that:
  	\begin{equation}
  		\label{signeta}
  		\eta_1 \vee \eta_2 \geq 0 \quad \text{ a.e.  in } Q.
  	\end{equation}

   \end{remark}
  
%
%
%
%
%
%
 
 \subsection{$H^{-1}-$gradient flow approach}\label{sectionmainrH-1}
   
 An interesting future  direction of this approach lies in establishing a formal connection between the dynamics in equation \eqref{PDEevol} and $H^{-1}-$gradient flow, mirroring the relationship observed in single-species models. We demonstrate this connection in the context of two species, thereby providing a general abstract framework for cross-diffusion systems arising from overcrowding dispersal phenomena.   For each $k=1,2,$ we consider the dual Sobolev space $H^{-1}_{\Gamma_{D_k}}(\Omega)$. It is not difficult to see that 
 \begin{equation} 
	\Vert f_k\Vert _{H^{-1}_{\Gamma_{D_k}}} =  	\min_{\omega\in L^2(\Omega)^N}\left \{  \left(\int_\Omega \sigma_k\: \vert   \omega   \vert^2\right)^{1/2}    \:  :\:  - \nabla \cdot ( \sigma_k\: \omega+ \overline f_k )   =f_{0k}  \hbox{ in }\Omega \hbox{ and }   ( \sigma_k\: \omega  +\overline f_k )\cdot \nu =0 \hbox{ on }\Gamma_{N_k }  \right\}  ,
\end{equation}  for any  $f\in [H^{-1}_{\Gamma_{D}}(\Omega)]^2,$ defines a norm on  $H^{-1}_{\Gamma_{D_k}}(\Omega).$  This norm is clearly associated with the inner product   
$$\langle f_k,g_k\rangle_{k,\Omega} : =  \int_\Omega\sigma_k \: \phi^f_k\cdot \phi^g_k , \quad \hbox{ for any }f,g\: \in H^{-1}_{\Gamma_{D_k}}(\Omega)$$
where $\phi^f_k$ and $\phi^g_k$ are (unique) given by 
$$\phi^f_k =  \argmin_{\omega \in L^2(\Omega)^N}\left \{   \int_\Omega    \sigma_k\:\vert  \omega   \vert^2    \:  :\:  - \nabla \cdot (\sigma_k\:\omega+ \overline f_k )   =f_{0k}  \hbox{ in }\Omega \hbox{ and }   (\sigma_k\: \omega  +\overline f_k )\cdot \nu =0 \hbox{ on }\Gamma_{N_k }  \right\}   $$
and 
$$\phi^g_k =  \argmin_{\omega \in L^2(\Omega)^N}\left \{   \int_\Omega\sigma_k\: \vert   \omega   \vert^2    \:  :\:  - \nabla \cdot (\sigma_k\: \omega+ \overline g_k )   =g_{0k}  \hbox{ in }\Omega \hbox{ and }   (\sigma_k\:\omega  +\overline g_k )\cdot \nu =0 \hbox{ on }\Gamma_{N_k }  \right\} .  $$  
Remember here (see for instance Proposition \ref{Lduals}), that   there exists a unique $\eta^f_k\in H^1_{\Gamma_{D_k}}\! \!(\Omega)$  (resp. $\eta^g_k\in H^1_{\Gamma_{D_k}}\! \!(\Omega)$) such that $\phi^f_k= \nabla \eta^f_k$ ( resp. $\phi^g_k= \nabla \eta^g_k$) a.e. in $\Omega.$ 
 This implies that  $H^{-1}_{\Gamma_{D_k}}\! \!(\Omega)$ equipped with  the norm $\Vert .\Vert _{H^{-1}_{\Gamma_{D_k}}}  $ and the inner product  $\langle .,.\rangle_{k,\Omega}$ is     a Hilbert space.   
   
  This being said,  let us consider the space  
  $H^{-1}_{\Gamma_{D_1}}\! \!(\Omega)\times H^{-1}_{\Gamma_{D_2}}\! \!(\Omega) ,$ endowed with the norm 
 $$\Vert f\Vert_{H^{-1}_{\Gamma_{D}}}  =\left( \Vert f_1\Vert _{H^{-1}_{\Gamma_{D_1}}}^2  +\Vert f_2\Vert _{H^{-1}_{\Gamma_{D_2}}} ^2   \right)^{1/2}, \quad \hbox{ for any }f=(f_1,f_2)\in  H^{-1}_{\Gamma_{D_1}}\! \!(\Omega)\times H^{-1}_{\Gamma_{D_2}}\! \!(\Omega) .$$
This is in turn  a Hilbert space, and the    associate  inner product  is given by
$$[ f,g]_{\Omega} =  \langle f_1,g_1\rangle_{1,\Omega}  + \langle f_2,g_2\rangle_{k,\Omega} ,\quad \hbox{ for any }f,g\in   H^{-1}_{\Gamma_{D_1}}\! \!(\Omega)\times H^{-1}_{\Gamma_{D_2}}\! \!(\Omega).$$ 

\bigskip 

Now, let us define the functional $\E\: :\: H^{-1}_{\Gamma_{D_1}}\! \!(\Omega)\times H^{-1}_{\Gamma_{D_2}}\! \!(\Omega)  \to [0,\infty]$  by 
\begin{equation}\label{enrgy}
	 \E(\rho):= \left\{ \begin{array}{ll}
	\int_\Omega \beta(.,\rho_1+\rho_2)\quad  &\hbox{ if }\rho_1,\: \rho_2,\:  \beta(.,\rho_1+\rho_2) \in L^1_+(\Omega) \\  \\
	+\infty  &\hbox{ otherwise }.	\end{array}   \right. 
\end{equation}  
Thanks to   assumptions $(H1)$ and $(H2)$, it is not difficult to  see that $\E$ is convex and  l.s.c. in   $H^{-1}_{\Gamma_{D_1}}\! \!(\Omega)\times H^{-1}_{\Gamma_{D_2}}\! \!(\Omega) .$ This   implies that the sub-differential   of $\E,$ $\partial\E,$ is well-defined  and forms a maximal monotone graph within   $ \left(H^{-1}_{\Gamma_{D_1}}\! \!(\Omega)\times H^{-1}_{\Gamma_{D_2}}\! \!(\Omega), \Vert -\Vert_{H^{-1}_{\Gamma_{D}}} \right). $  So, we can consider the $H^{-1}_{\Gamma_{D_1}}\! \!(\Omega)\times H^{-1}_{\Gamma_{D_2}}\! \!(\Omega)-$gradient flow  problem 
\begin{equation}\label{EvolH-1}
	\left\{  \begin{array}{ll}
		\frac{d}{dt}\rho (t)+ \partial \E(\rho(t))\ni h(t) \quad  & \hbox{ a.e.  }t\in (0,T) \\  \\ 
		\rho(0)=\rho_0,
	\end{array}\right. 
\end{equation} 
where $h\in L^2(0,T;H^{-1}_{\Gamma_{D_1}}(\Omega)\times H^{-1}_{\Gamma_{D_2}}(\Omega) ).$
Our aim now, is to establish the connection  between  the problems \eqref{PDEevol} and  \eqref{EvolH-1}.   This is summarized in the following theorem.

  \begin{theorem}\label{ThH-1} Under the assumptions   $g\equiv 0$,   $\pi\equiv 0$,   $f\in L^2\left(0,T; H^{-1}_{\Gamma_{D_1}}\! \!(\Omega)\times H^{-1}_{\Gamma_{D_2}}\! \!(\Omega)\right)$ and $V\in [L^\infty(Q)]^2,$      let   us consider $(\rho,\eta)$ be a weak solution of  the system of PDE  \eqref{PDEevol} given by Theorem \ref{texistuniq}. Then, $\rho$    coincides with the   strong solution of  \eqref{EvolH-1}, where 
  	$$h= (f_1-\nabla \cdot (\rho_1\: V_1) , f_2-\nabla \cdot ( \rho_2\: V_2 )).$$  
  \end{theorem}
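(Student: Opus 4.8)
The plan is to show that the pair $\rho=(\rho_1,\rho_2)$ produced by Theorem~\ref{texistuniq} satisfies, for a.e.\ $t$, the abstract inclusion $h(t)-\frac{d}{dt}\rho(t)\in\partial\E(\rho(t))$ in the product Hilbert space $\mathbf H:=H^{-1}_{\Gamma_{D_1}}(\Omega)\times H^{-1}_{\Gamma_{D_2}}(\Omega)$, and then to invoke uniqueness of strong solutions of the gradient flow \eqref{EvolH-1}. Since $(H1)$--$(H2)$ make $\E$ proper, convex and l.s.c.\ on $\mathbf H$, the operator $\partial\E$ is maximal monotone and \eqref{EvolH-1} has a unique strong solution for the given $h\in L^2(0,T;\mathbf H)$ and $\rho_0\in D(\E)$ (the latter by \eqref{Condrho0}); hence it suffices to check that $\rho$ is \emph{a} strong solution. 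Observe first that $\rho\in W^{1,2}(0,T;\mathbf H)$ because $\rho_k\in W^{1,2}(0,T;H^{-1}_{\Gamma_{D_k}}(\Omega))$, and that $h\in L^2(0,T;\mathbf H)$ because $f\in L^2(0,T;\mathbf H)$ and $\rho_kV_k\in L^2(Q)^N$, so the derivative and the forcing live in the right class.

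The first real step is to identify the element $\zeta_k:=h_k-\frac{d}{dt}\rho_k\in H^{-1}_{\Gamma_{D_k}}(\Omega)$. Inserting the definition of $h_k$ into the weak formulation \eqref{weakform} and using the homogeneous data $g\equiv0$, $\pi\equiv0$, a direct manipulation makes the $f_{0k}$, $\overline f_k$ and $\rho_kV_k$ contributions cancel, leaving for every $\xi_k\in H^1_{\Gamma_{D_k}}(\Omega)$ the identity $\langle\zeta_k,\xi_k\rangle_\Omega=\int_\Omega\sigma_k\,\nabla\eta_k\cdot\nabla\xi_k$. Thus $\zeta_k=-\nabla\cdot(\sigma_k\nabla\eta_k)$ in $H^{-1}_{\Gamma_{D_k}}(\Omega)$, with the conormal condition $\sigma_k\nabla\eta_k\cdot\nu=0$ on $\Gamma_{N_k}$ encoded through the restriction $\xi_k\in H^1_{\Gamma_{D_k}}(\Omega)$. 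Since $g_k=0$ forces $\eta_k\in H^1_{\Gamma_{D_k}}(\Omega)$, uniqueness of the Riesz potential (Proposition~\ref{Lduals}) identifies $\eta_k$ as exactly the $H^{-1}_{\Gamma_{D_k}}$-potential of $\zeta_k$, i.e.\ $\phi^{\zeta_k}_k=\nabla\eta_k$.

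Next I would evaluate the inner product against an arbitrary competitor $\mu=(\mu_1,\mu_2)\in\mathbf H$. Symmetry of $\langle\cdot,\cdot\rangle_{k,\Omega}$ together with the identification of the potential gives $\langle\zeta_k,\mu_k-\rho_k\rangle_{k,\Omega}=\langle\mu_k-\rho_k,\eta_k\rangle_\Omega$. It is enough to treat $\mu\in D(\E)$, the inequality being trivial otherwise; and on $D(\E)$ assumption $(H2)$ forces $\mu_1+\mu_2\in L^2_+(\Omega)$, hence $\mu_k\in L^2_+(\Omega)$, so the duality bracket collapses to $\int_\Omega\eta_k(\mu_k-\rho_k)$. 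Summing in $k$ yields $[\zeta,\mu-\rho]_\Omega=\int_\Omega\eta_1(\mu_1-\rho_1)+\int_\Omega\eta_2(\mu_2-\rho_2)$. I would then feed in the state relations \eqref{Stateequations}, which by the equivalences established for $\partial\B(x,\cdot)$ mean precisely $(\eta_1,\eta_2)\in\partial\B(x,\rho_1,\rho_2)$ for a.e.\ $x$. The pointwise subdifferential inequality for the convex map $\B(x,\cdot)$ tested at $(\mu_1,\mu_2)$ reads $\eta_1(\mu_1-\rho_1)+\eta_2(\mu_2-\rho_2)\le\B(x,\mu)-\B(x,\rho)$; integrating over $\Omega$ and using $\int_\Omega\B(x,\mu)=\E(\mu)$, $\int_\Omega\B(x,\rho)=\E(\rho)$ gives $[\zeta,\mu-\rho]_\Omega\le\E(\mu)-\E(\rho)$ for all $\mu\in\mathbf H$, that is $\zeta(t)\in\partial\E(\rho(t))$ a.e.\ $t$. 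Together with $\rho(0)=\rho_0$ this exhibits $\rho$ as a strong solution of \eqref{EvolH-1}, and uniqueness finishes the proof.

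I expect the main obstacle to be the identification carried out in the second step: pinning down $\zeta_k=h_k-\frac{d}{dt}\rho_k$ as the precise element whose Riesz potential is $\eta_k$ requires care with the mixed boundary conditions (the null Dirichlet trace on $\Gamma_{D_k}$ against the null conormal flux on $\Gamma_{N_k}$) and with the bookkeeping of the transport and $\overline f_k$ terms, which is exactly where the definition of $h$ is used. Once this is in place, the passage from the pointwise monotonicity of $\partial\B$ to the $H^{-1}$-subdifferential inequality, and the appeal to the uniqueness theory for maximal monotone flows, are routine.
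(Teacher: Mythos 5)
Your proposal is correct and follows essentially the same route as the paper: the paper delegates the key step to Proposition~\ref{PH-1}, whose proof of the inclusion $\A\subset\partial\E$ is exactly your computation $[\zeta,\mu-\rho]_\Omega=\int_\Omega\eta_1(\mu_1-\rho_1)+\int_\Omega\eta_2(\mu_2-\rho_2)\le\E(\mu)-\E(\rho)$ via the Riesz-potential identification and the state relations \eqref{Stateequations}. The only cosmetic difference is that you establish just the one inclusion $\zeta(t)\in\partial\E(\rho(t))$ needed for the theorem, whereas the paper proves the full identification $\partial\E=\A$ by adding a maximality/surjectivity argument from Corollary~\ref{CorEst}.
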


Thanks to general theory of evolution problem governed by maximal monotone graph on Hilbert space,  this theorem implies in particular that, if $(\rho^1,\eta^1)$ and $(\rho^2,\eta^2)$ are  weak solutions of  the system of PDE  \eqref{PDEevol} corresponding to given $(f^1,V^1)$ and $(f^2,V^2),$ respectively, then   
 \begin{equation}\label{Hcomparison}
 	\begin{array}{c}  	\frac{1}{2}	\frac{d}{dt} \sum_{k=1,2} \Vert \rho^1_k-\rho^2_k \Vert_{H^{-1}_{\Gamma_{D_k}}(\Omega) }^2 + 
 		[\partial \E(\rho^1) - \partial \E(\rho^2),  \rho^1-\rho^2]_\Omega  \\  
 		\leq    \sum_{k=1,2}  \langle f^1_k-f^2_k,  \rho^1_k-\rho^2_k  \rangle_{ k,\Omega} -   \sum_{k=1,2}  \langle \nabla \cdot(\rho^1_k\: V^1_k-\rho^2\: V^2_k),  \rho^1_k-\rho^2_k  \rangle_{ k,\Omega}   ,\quad \hbox{ in }\D'(0,T).
 \end{array} 	\end{equation}
 More specifically, this implies the following  uniqueness result. 
 
 \begin{corollary}\label{Corcontraction}
Under the assumptions   $g\equiv 0$   $\pi\equiv 0$ and $ f  \in L^2\left(0,T; H^{-1}_{\Gamma_{D_1}}\! \!(\Omega) \times H^{-1}_{\Gamma_{D_2}}\! \!(\Omega)\right)$, let us consider  $V^1,\: V^2\in L^\infty(Q)$.    If  $(\rho^1,\eta^1)$ and $(\rho^2,\eta^2)$ are  weak solutions of  the system of PDE  \eqref{PDEevol} corresponding to $ V^1 $ and $ V^2 ,$ respectively, such that $\rho^1(0)=\rho^2(0)$ and there exists $c>0$ such that 	\begin{equation}\label{suffuniq}
	\begin{array}{c}   		[\partial \E(\rho^1) - \partial \E(\rho^2),  \rho^1-\rho^2]_\Omega +   \sum_{k=1,2}  \langle \nabla \cdot(\rho^1_k\: V^1_k-\rho^2\: V^2_k),  \rho^1_k-\rho^2_k  \rangle_{ k,\Omega}  \\  \geq -  c \: \sum_{k=1,2}\Vert \rho^1_k-\rho^2_k \Vert_{H^{-1}_{\Gamma_{D_k}}(\Omega) }^2,\quad \hbox{ a.e. in  }(0,T),
\end{array}  \end{equation}
then  $\rho_1=\rho_2.$  
 \end{corollary}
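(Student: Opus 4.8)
The plan is to reduce the statement to a single scalar differential inequality for
$$\Phi(t):=\sum_{k=1,2}\Vert \rho^1_k(t)-\rho^2_k(t)\Vert_{H^{-1}_{\Gamma_{D_k}}(\Omega)}^2,$$
and to close the argument with Gronwall's lemma. The essential input is the comparison inequality \eqref{Hcomparison}, which is already available as a consequence of Theorem~\ref{ThH-1} together with the abstract theory of evolution problems governed by a maximal monotone operator on a Hilbert space. Since here both solutions are driven by the \emph{same} source $f^1=f^2=f$, the term $\sum_{k}\langle f^1_k-f^2_k,\rho^1_k-\rho^2_k\rangle_{k,\Omega}$ appearing on the right-hand side of \eqref{Hcomparison} vanishes identically, which is precisely what makes the scheme work.

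First I would specialize \eqref{Hcomparison} to this common source. Moving the drift contribution to the left-hand side, this leaves, in $\D'(0,T)$,
$$\frac{1}{2}\,\frac{d}{dt}\Phi(t)+[\partial\E(\rho^1)-\partial\E(\rho^2),\rho^1-\rho^2]_\Omega+\sum_{k=1,2}\langle\nabla\cdot(\rho^1_k V^1_k-\rho^2_k V^2_k),\rho^1_k-\rho^2_k\rangle_{k,\Omega}\le 0.$$
Next I would invoke the coercivity hypothesis \eqref{suffuniq}, which states precisely that the sum of the last two terms above is bounded below by $-c\,\Phi(t)$ for a.e. $t\in(0,T)$. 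Substituting this lower bound (equivalently, bounding the negative of those two terms from above by $c\,\Phi$) yields the scalar inequality $\frac{1}{2}\,\Phi'(t)\le c\,\Phi(t)$, that is $\Phi'(t)\le 2c\,\Phi(t)$, still in the sense of distributions on $(0,T)$.

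To conclude I would first note that $\Phi$ is absolutely continuous: since $\rho^1_k-\rho^2_k\in W^{1,2}(0,T;H^{-1}_{\Gamma_{D_k}}(\Omega))$ for each $k$, the squared Hilbert norm $t\mapsto\Vert\rho^1_k(t)-\rho^2_k(t)\Vert^2_{H^{-1}_{\Gamma_{D_k}}(\Omega)}$ belongs to $W^{1,1}(0,T)$ and its distributional derivative agrees a.e. with the classical one. Hence the distributional inequality $\Phi'\le 2c\,\Phi$ holds pointwise a.e., and Gronwall's lemma gives $\Phi(t)\le \Phi(0)\,e^{2ct}$ for every $t\in[0,T]$. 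Because $\rho^1(0)=\rho^2(0)$ we have $\Phi(0)=0$, and since $\Phi\ge 0$ this forces $\Phi\equiv 0$, i.e. $\rho^1=\rho^2$, which is the asserted uniqueness.

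I do not expect a genuine obstacle here, since the whole analytic difficulty has been front-loaded into the comparison inequality \eqref{Hcomparison} (equivalently, into Theorem~\ref{ThH-1}, which identifies $\rho$ as the strong $H^{-1}$-gradient flow solution of \eqref{EvolH-1}). The only points requiring real care are the sign bookkeeping when transferring the drift term from the right-hand side of \eqref{Hcomparison} into the coercivity bound \eqref{suffuniq}, and the justification that $\Phi$ is absolutely continuous, so that the distributional differential inequality may legitimately be integrated through Gronwall's lemma.
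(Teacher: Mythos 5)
Your proof is correct and follows exactly the route of the paper, which simply states that the result is "a simple consequence of \eqref{Hcomparison} and Grönwall inequality"; you have merely filled in the sign bookkeeping and the absolute continuity of $\Phi$, both of which are handled correctly.
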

 \begin{proof}The proof is a simple consequence of   \eqref{Hcomparison} and Grönwall inequality.  	
 	\end{proof}

  \bigskip

 Equation~\eqref{suffuniq} incorporates assumptions about $\beta$ and $V^s,$ for $s=1,2.$   This condition becomes  particularly relevant in applications where the drift term depends on the densities themselves. However, in the special case where $V\equiv 0$ (i.e., no potential term), the uniqueness of the weak solution can be established without additional assumptions. This directly follows from the monotone property of $\partial \E$, expressed as :
  $$[\partial \E(\rho^1(t)) - \partial \E(\rho^2(t)), \rho^1 - \rho^2]_\Omega \geq 0.$$
  Therefore, the following uniqueness result holds.
  
 \begin{corollary}\label{Coruniquness}
 	Under the assumptions $g\equiv 0,$   $\pi\equiv 0$, $V\equiv  0,$    $f\in L^2\left(0,T; H^{-1}_{\Gamma_{D_1}}\! \!(\Omega)\times H^{-1}_{\Gamma_{D_2}}\! \!(\Omega)\right)$ and  $ \rho_0 \in L^{2}(Q)\times L^{2}(Q) $ satisfying 
\eqref{Condrho0},     the system of PDE  \eqref{PDEevol} admits a unique weak solution. 
 \end{corollary}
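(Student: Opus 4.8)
The plan is to obtain both assertions from results already in hand, splitting the statement into its existence and uniqueness parts. Existence of a weak solution is nothing but Theorem~\ref{texistuniq} read off under the data $g\equiv 0$, $\pi\equiv 0$, $V\equiv 0$ and $f\in L^2(0,T;H^{-1}_{\Gamma_{D_1}}(\Omega)\times H^{-1}_{\Gamma_{D_2}}(\Omega))$, so no fresh work is required there. The entire content of the corollary therefore reduces to uniqueness, and for this I would lean on the gradient-flow identification furnished by Theorem~\ref{ThH-1}.

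For uniqueness, let $(\rho^1,\eta^1)$ and $(\rho^2,\eta^2)$ be two weak solutions of \eqref{PDEevol} sharing the initial datum $\rho^1(0)=\rho^2(0)=\rho_0$. Since $V\equiv 0$, Theorem~\ref{ThH-1} applies to each with $h=f$, so $\rho^1$ and $\rho^2$ are both strong solutions of one and the same $H^{-1}$-gradient flow \eqref{EvolH-1}. I would then invoke the comparison inequality \eqref{Hcomparison}, which for the common source $f^1=f^2=f$ and the vanishing drifts $V^1=V^2=0$ collapses to
\begin{equation*}
\tfrac12\frac{d}{dt}\sum_{k=1,2}\Vert \rho^1_k-\rho^2_k\Vert_{H^{-1}_{\Gamma_{D_k}}(\Omega)}^2 + [\partial\E(\rho^1)-\partial\E(\rho^2),\rho^1-\rho^2]_\Omega \le 0 \quad\text{in }\D'(0,T).
\end{equation*}
The decisive point is that $\E$ is convex and lower semicontinuous on the Hilbert space $H^{-1}_{\Gamma_{D_1}}(\Omega)\times H^{-1}_{\Gamma_{D_2}}(\Omega)$, so $\partial\E$ is (maximal) monotone and hence $[\partial\E(\rho^1)-\partial\E(\rho^2),\rho^1-\rho^2]_\Omega\ge 0$. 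Discarding this nonnegative term leaves $\frac{d}{dt}\sum_{k}\Vert\rho^1_k-\rho^2_k\Vert_{H^{-1}_{\Gamma_{D_k}}(\Omega)}^2\le 0$; integrating in time and using $\rho^1(0)=\rho^2(0)$ forces the left-hand norm to vanish for every $t$, whence $\rho^1=\rho^2$.

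Put differently, this is exactly Corollary~\ref{Corcontraction} with the drift hypothesis \eqref{suffuniq} automatically in force for any $c>0$: taking $V^1=V^2=0$ kills the convective brackets, and the surviving term $[\partial\E(\rho^1)-\partial\E(\rho^2),\rho^1-\rho^2]_\Omega\ge 0$ trivially dominates $-c\sum_k\Vert\rho^1_k-\rho^2_k\Vert_{H^{-1}_{\Gamma_{D_k}}(\Omega)}^2$, so the conclusion follows from Grönwall as in that proof. I do not expect a genuine obstacle, as all the analytic machinery is prepackaged in Theorems~\ref{texistuniq} and \ref{ThH-1}; the only point deserving care is to ensure that the two solutions are compared as elements of the same Hilbert space with matching initial data, so that the contraction principle underlying \eqref{Hcomparison} legitimately applies.
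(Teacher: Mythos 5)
Your proposal is correct and follows exactly the route the paper takes: existence is read off from Theorem~\ref{texistuniq}, and uniqueness follows from the gradient-flow identification of Theorem~\ref{ThH-1} together with the comparison inequality \eqref{Hcomparison}, where the monotonicity of $\partial\E$ makes the bracket term nonnegative so that the $H^{-1}$-distance between two solutions is nonincreasing. This is precisely the argument the paper sketches in the paragraph preceding the corollary (equivalently, Corollary~\ref{Corcontraction} with $V^1=V^2=0$, where \eqref{suffuniq} holds trivially).
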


 \begin{remark} \label{remarqyeBeta}
 \begin{enumerate}
 	\item Theorem \ref{ThH-1} bridges a connection  between the cross-diffusion system \eqref{PDE1} and the well-established theory of gradient flows in Hilbert spaces.  Furthermore, the specific form of the internal energy \eqref{enrgy} allows us to definitely connect the differential operator $\A$ (defined precisely in Proposition \ref{PH-1}) governing \eqref{PDE1} to the dispersal dynamics of overcrowding for two species.  As to the connection with systems of type \eqref{crossgen2} and \eqref{PDEevol0} commonly used in the literature, we de believe that  is likely more nuanced and may hold true only in specific cases, particularly when solutions exist, such as in fully segregated scenarios.

 	 \item  Theorem \ref{ThH-1} also yields other crucial results which uses the equivalence between weak solutions and $H^{-1}_{\Gamma_{D_1}}\! \!(\Omega)\times H^{-1}_{\Gamma_{D_2}}\! \!(\Omega)-$gradient flow solutions. Beyond uniqueness results of Corollary \ref{Corcontraction} and Corollary \ref{Coruniquness}, the large time behavior which could be of great importance for the applications maybe concerned too.  Indeed, thanks to the theory of gradient flows in Hilbert spaces, we see  that if $f\in L^2(0,\infty ;H^{-1}_{\Gamma_{D_1}}\! \!(\Omega)\times H^{-1}_{\Gamma_{D_2}}\! \!(\Omega)) $ is such that, as $t\to\infty,$   
 	$$f(t)\to f_\infty,\quad \hbox{ in } H^{-1}_{\Gamma_{D_1}}\! \!(\Omega)\times H^{-1}_{\Gamma_{D_2}},$$
 	then the solution $\rho(t)$ converges uniformly in $H^{-1}_{\Gamma_{D_1}}\! \!(\Omega)\times H^{-1}_{\Gamma_{D_2}}\! \!(\Omega)$  to $\rho,$ as $t\to\infty,$ where $ \rho \in L^{2}(\Omega)\times L^{2}(\Omega) $ satisfies  
 	\eqref{Condrho0} and there exists $\eta\in [H^1(\Omega)]^2$ for which the couple $(\rho,\eta)$ is a solution of the stationary problem 
 	 \begin{equation} 
 		\left\{\begin{array}{ll} 
 			\left.  \begin{array}{ll} 
 	 - \nabla\cdot ( \sigma_k\:  \nabla\eta _k+  \overline f_{\infty k})=  f _{\infty 0k}  ,  \quad  \hbox{ for }k=1,2 \\ 
 				\\  		\eta_1\vee\eta_2 =:\tilde\eta \in \partial \beta(x,   \rho_1+ \rho_2),\: \quad  \eta_k -\tilde \eta    \in  \partial  I\!\! I_{[0,\infty)}(\rho_k),   
 			\end{array}\right\} 	\quad & \hbox{ in } Q, \\  \\
 			(\sigma_k\: \nabla\eta_k + \overline f_{\infty k})\cdot \nu  =  \pi_k   & \hbox{ on }\Sigma_{N _k}\\  \\
 			\eta_k  =  g _k ,\quad    & \hbox{ on }\Sigma_{D _k}. 
 	\end{array}	\right. 	 
 	\end{equation}

 	\item  In the case where $V\not\equiv 0,$ the application of $ H^{-1}_{\Gamma_{D_1}}\! \!(\Omega)\times H^{-1}_{\Gamma_{D_2}}\! \!(\Omega)-$gradient flow theory  remains to be restrictive. The uniqueness of weak solutions of the problem \eqref{PDE1} remains to be a challenging open problem. Remember that for the case of one specie, the uniqueness may follows  through contraction principles in $L^1$ and/or in  $\mathcal W_2-$Wasserstein space (one can see \cite{IgShaw,KM} for discussions and references in this direction).    It is not clear yet for us how to deal with $L^1-$theory for cross-diffusion of the type \eqref{PDE1}.

 	\item In this paper, we focus on the case where source term  $f=f(t,x).$ However, one sees that one can use the results of Theorem \ref{ThH-1} also  for treating more general reaction terms $f=f(t,x,\rho,\eta).$  
 	
 	\item At first glance,  at least for overcrowding  dynamics of one specie, the theories of gradient flows in $H^{-1}$ and $\mathcal W_2-$Wasserstein space   appear to be similar, providing comparable dynamic behaviors through more or less similar internal energies.  It is surprising to note here that, this seems not to be the case for two species. Indeed,  recent works
 	 suggests that the theory of gradient flows in  $\mathcal W_2-$Wasserstein space appears to be closer to the model  
   \eqref{PDEevol0}  à la  Gurtin et al. (cf. \cite{Gurtin84})   	than the $H^{-1}$-theory, which provides complete and general results for \eqref{PDE1}  and seems to generalize  \eqref{PDEevol0}.  We believe that further developments using $\mathcal W_2$ and $H^{-1}$ frameworks should be carried out to complete or explain more rigorously the links between both approaches in the case of two species. 
 	
 \end{enumerate}
 \end{remark}

  \subsection{Weighted internal energy}
  The results of Section \ref{sectionmainresultexist} and Section \ref{sectionmainrH-1}  enable  also to treat the case where the densities are combined with specific weights in the internal energy. These weights represent the relative importance of each density in influencing the internal energy.  In this case the cross-diffusion system reads 
  \begin{equation}
  	\label{PDEevolalpha}
  	\left\{\begin{array}{ll} 
  		\left.  \begin{array}{ll} 
  			\partial_t 	\rho _k- \nabla\cdot ( \sigma_k\:   \nabla\eta _k+ \rho_k V_k+\overline f_k)=  f _{0k}  ,  \quad  \\ 
  			\\    \eta_k    \in \partial \beta(x,  \alpha_1\rho_1+\alpha_2\rho_2)   + \partial  I\!\! I_{[0,\infty)}(\rho_k)
  		\end{array}\right\} 	\quad & \hbox{ in } Q, \\  \\
  		(\sigma_k\: \nabla\eta_k + \rho_k V_k +\overline f_k)\cdot \nu  =  \pi_k   & \hbox{ on }\Sigma_{N _k}\\  \\
  		\eta_k  =  g _k ,\quad    & \hbox{ on }\Sigma_{D _k},\\  \\ 
  		\rho_k(0)=\rho_{0k}  & \hbox{ in } \Omega, 
  	\end{array}
  	\right\}  	   \hbox{ for }k=1,2 ,
  \end{equation}   
  where $(\alpha_1,\alpha_2)\in \RR^2$ are given and assumed to satisfy the following condition 
  \begin{equation} 
  	0< \min(\alpha_1 ,\alpha_2 ) .  
  \end{equation} 
  
  \begin{theorem} \label{Cexistuniq} Under the previous assumptions, assume moreover that 
  	$$\int_\Omega \beta(.,\alpha_1\rho_{01}+\alpha_2\rho_{02})<\infty.$$
  	Then,  the system of PDE  \eqref{PDEevolalpha}    has a solution $(\rho,\eta)$ in  the sense that,        for each $k=1,2,$  $\rho_k \in L^2_+(Q) \cap  W^{1,2}(0,T;H^{-1}_{\Gamma_{D_k}}(\Omega)),$  $\rho_k(0)=\rho_{0k},$  $\eta_k\in L^2(0,T;H^1 (\Omega)),$  $\eta_k=g_k$ on $\Gamma_{D_k},$     	
  	\begin{equation} \label{Stateequations1}
  		\left\{ \begin{array}{l}    \eta_1  \vee  \eta_2  =:\tilde\eta \in \partial \beta(x, \alpha_1 \rho_1+\alpha_2\rho_2),\: \\  \\    \eta_k -\tilde \eta    \in  \partial  I\!\! I_{[0,\infty)}(\rho_k),
  		\end{array} \right.      \quad \hbox{ a.e. in }Q, 
  	\end{equation} 
  	and    
  	\begin{equation}\label{weakform1}
  		\frac{d}{dt}\int_\Omega  \rho _k\: \xi _k    + \int_\Omega (\sigma_k\: \nabla\eta_k+\rho\: V_k+\overline f_k) \cdot \nabla \xi_k  =  \int_\Omega f_{0k}\: \xi_k   +\langle  \pi_k, \xi_k\rangle_{\Gamma_{N_k}}  ,\quad     \hbox{ in }\D('0,T), 
  	\end{equation}  
  	for any $  (\xi_1,\xi_2) \in H^1_{\Gamma_{D_1} } (\Omega) \times  H^1_{\Gamma_{D_2} } (\Omega).$ 
  \end{theorem}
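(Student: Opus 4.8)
The plan is to reduce Theorem~\ref{Cexistuniq} to the already established Theorem~\ref{texistuniq} by a linear change of unknowns. Since $\alpha_1,\alpha_2>0$, the map $(\rho_1,\rho_2)\mapsto(\tilde\rho_1,\tilde\rho_2):=(\alpha_1\rho_1,\alpha_2\rho_2)$ is an invertible, positivity-preserving rescaling of each density, and it converts the weighted argument $\alpha_1\rho_1+\alpha_2\rho_2$ appearing inside $\beta$ into the unweighted sum $\tilde\rho_1+\tilde\rho_2$. The idea is that, written in the variables $(\tilde\rho,\eta)$, the system \eqref{PDEevolalpha} is exactly a system of the form \eqref{PDEevol} with suitably rescaled data, to which Theorem~\ref{texistuniq} applies verbatim.

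First I would record how the data transform. Multiplying the $k$-th balance law in \eqref{PDEevolalpha} by $\alpha_k$ and using $\tilde\rho_k=\alpha_k\rho_k$ yields
\[
\partial_t\tilde\rho_k-\nabla\cdot(\tilde\sigma_k\,\nabla\eta_k+\tilde\rho_k V_k+\tilde{\overline f}_k)=\tilde f_{0k},
\]
with $\tilde\sigma_k:=\alpha_k\sigma_k$, $\tilde f_{0k}:=\alpha_k f_{0k}$, $\tilde{\overline f}_k:=\alpha_k\overline f_k$, while $V_k$ is unchanged; the flux rescales by $\alpha_k$, so the Neumann condition on $\Sigma_{N_k}$ becomes $(\tilde\sigma_k\nabla\eta_k+\tilde\rho_k V_k+\tilde{\overline f}_k)\cdot\nu=\tilde\pi_k:=\alpha_k\pi_k$, the Dirichlet datum $g_k$ is unchanged since $\eta_k$ is not rescaled, and the initial datum becomes $\tilde\rho_{0k}:=\alpha_k\rho_{0k}$. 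I would then check that all hypotheses of Theorem~\ref{texistuniq} hold for this rescaled data: $\tilde\sigma\in[L^\infty(\Omega)]^2$ with $\min_{\overline\Omega}\min(\tilde\sigma_1,\tilde\sigma_2)>0$ because $\alpha_k>0$ and $\sigma$ satisfies \eqref{minsigma}; $(\tilde f_{0k},\tilde{\overline f}_k)\in L^2(Q)\times L^2(Q)^N$, $\tilde\pi_k\in H^{-1/2}(\Gamma_{N_k})$, $V_k\in L^\infty(Q)^N$, and $\tilde\rho_{0k}\in L^2(\Omega)$; finally the compatibility \eqref{Condrho0} for the rescaled initial data is precisely the added hypothesis $\int_\Omega\beta(.,\alpha_1\rho_{01}+\alpha_2\rho_{02})<\infty$, since $\tilde\rho_{01}+\tilde\rho_{02}=\alpha_1\rho_{01}+\alpha_2\rho_{02}$. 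Here $\beta$ itself is not modified, so assumptions (H1)--(H2) are untouched.

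Next I would apply Theorem~\ref{texistuniq} to the rescaled system to obtain $(\tilde\rho,\eta)$ with $\tilde\rho_k\in L^2_+(Q)\cap W^{1,2}(0,T;H^{-1}_{\Gamma_{D_k}}(\Omega))$, $\eta_k\in L^2(0,T;H^1(\Omega))$, $\eta_k=g_k$ on $\Gamma_{D_k}$, satisfying \eqref{Stateequations} and the weak formulation \eqref{weakform} for the tilded data, and then set $\rho_k:=\tilde\rho_k/\alpha_k$. Scaling by the positive constant $1/\alpha_k$ preserves nonnegativity and the stated regularity, so $\rho_k\in L^2_+(Q)\cap W^{1,2}(0,T;H^{-1}_{\Gamma_{D_k}}(\Omega))$ and $\rho_k(0)=\rho_{0k}$. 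The state relation transfers immediately: because $\tilde\rho_1+\tilde\rho_2=\alpha_1\rho_1+\alpha_2\rho_2$ and, as $\alpha_k>0$ gives $[\rho_k=0]=[\tilde\rho_k=0]$, one has $\partial I\!\! I_{[0,\infty)}(\tilde\rho_k)=\partial I\!\! I_{[0,\infty)}(\rho_k)$ pointwise, so \eqref{Stateequations} becomes exactly \eqref{Stateequations1}. Dividing the weak identity for the tilded problem by $\alpha_k>0$ and undoing the substitutions $\tilde\sigma_k=\alpha_k\sigma_k$, $\tilde{\overline f}_k=\alpha_k\overline f_k$, $\tilde f_{0k}=\alpha_k f_{0k}$, $\tilde\pi_k=\alpha_k\pi_k$, $\tilde\rho_k=\alpha_k\rho_k$ reproduces \eqref{weakform1} verbatim. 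This yields the desired solution $(\rho,\eta)$ of \eqref{PDEevolalpha}.

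There is no genuine analytic obstacle here, since all the hard work---the minimizing-movements construction and the passage to the limit---is already contained in Theorem~\ref{texistuniq}; the only care needed is bookkeeping, ensuring that every piece of data scales consistently (in particular that the flux, and hence the normal trace on $\Sigma_{N_k}$, carries the factor $\alpha_k$) and that the invariance $\partial I\!\! I_{[0,\infty)}(\alpha_k\rho_k)=\partial I\!\! I_{[0,\infty)}(\rho_k)$ is used correctly so that the sign-constraint multipliers are not spuriously rescaled. If one instead preferred a self-contained argument, the same steepest-descent scheme \eqref{Prox0} could be run directly with the weighted energy $\int_\Omega\beta(.,\alpha_1\rho_1+\alpha_2\rho_2)$, but the change of variables makes this unnecessary.
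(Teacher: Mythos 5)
Your proof is correct and follows essentially the same route as the paper: the change of unknowns $\tilde\rho_k=\alpha_k\rho_k$ reduces \eqref{PDEevolalpha} to an instance of \eqref{PDEevol} to which Theorem~\ref{texistuniq} applies, using that $\alpha_k>0$ preserves positivity, leaves $\partial I\!\! I_{[0,\infty)}$ unchanged, and turns the hypothesis $\int_\Omega\beta(.,\alpha_1\rho_{01}+\alpha_2\rho_{02})<\infty$ into \eqref{Condrho0}. In fact your bookkeeping is the more careful of the two: multiplying the $k$-th balance law by $\alpha_k$ yields rescaled data $\alpha_k\sigma_k$, $\alpha_k\overline f_k$, $\alpha_k f_{0k}$, $\alpha_k\pi_k$, whereas the auxiliary system displayed in the paper's proof divides these quantities by $\alpha_k$, which does not transform back to \eqref{PDEevolalpha} unless $\alpha_k=1$ and appears to be a typo.
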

 \begin{proof}  The proof of this result follows directly by   using Theorem \ref{texistuniq} and the fact that $(\rho,\eta)$ is a solution of \eqref{PDEevolalpha} if and only if the couple $(\tilde \rho,\eta),$ where $\tilde \rho:=(\alpha_1\:\rho_1,\alpha_2\:\rho_2),$ is a solution of the  
 	 \begin{equation}
 		\left\{\begin{array}{ll} 
 			\left.  \begin{array}{ll} 
 				\partial_t  \tilde 	\rho _k- \nabla\cdot (  \sigma_k/\alpha_k\:   \nabla\eta _k+ \tilde  \rho_k  V_k+ \overline f_k/\alpha_k)=  f _{0k}/\alpha_k  ,  \quad  \\ 
 				\\    \eta_k    \in \partial \beta(x, \tilde\rho_1+\tilde\rho_2)   + \partial  I\!\! I_{[0,\infty)}(\tilde \rho_k)
 			\end{array}\right\} 	\quad & \hbox{ in } Q, \\  \\
 			(\sigma_k/\alpha_k\: \nabla\eta_k + \tilde \rho_k V_k +\overline f_k/\alpha_k)\cdot \nu  =  \pi_k/\alpha_k   & \hbox{ on }\Sigma_{N _k}\\  \\
 			\eta_k  =  g _k ,\quad    & \hbox{ on }\Sigma_{D _k},\\  \\ 
 			\rho_k(0)=\rho_{0k}  & \hbox{ in } \Omega, 
 		\end{array}
 		\right\}  	   \hbox{ for }k=1,2 .
 	\end{equation}    
 	\end{proof}

   \section{Modeling through proximal minimization algorithm}
   \label{Smodeling}

   \subsection{Preliminaries }
   As introduced earlier, our approach to cross-diffusion modeling relies on a proximal minimization algorithm. This algorithm, commonly used in the context of nonlinear semigroups within Banach/Hilbert spaces, emerges from the Euler-implicit scheme. It also plays a crucial role in the JKO scheme, minimizing movement within the gradient flow in Wasserstein space, and showcases connections to the gradient descent method used for optimization. In a previous work \cite{IgGF}, we prove its application to establish a general nonlinear diffusion equation for a single species. This paper aims to extend this approach to the case of two interacting species. We refer readers to  \cite{IgGF} for further details, discussions, development, and references related to this method.

Recall that      proximal minimization algorithms target minimizing the system's internal energy against the work associated with transitions from a given state to a next one. Following \cite{IgGF}, we propose employing the minimum flow process to measure this work. This process enables to entail the appropriate dynamic procedure that continuously updates the system's state, aiming to decrease its internal energy through a nonlinear diffusion process.

   \medskip

   More formally,   let $X$ be a state space and $\mathcal E(\rho)$ be the internal energy of a given system  at state $\rho\in X.$   At each time $t>0,$  the dynamic system tends simultaneously  to decrease its internal energy and to minimize the work required to move from state $\rho(t)$ to state $\rho(t+h).$  This can be achieved by solving the following optimization problem (proximal energy) at each time step:
   $$\min_{\rho \in X}  \Big\{  \mathcal E(\rho) + W^h(\rho(t), \rho) \Big\} ,$$ 
   where $W^h(\rho(t),\rho)$ is a measure of the work required to move from state $\rho(t)$ to state $\rho,$ throughout small time step $h>0.$  The solution to this optimization problem $\rho(t+h) $ is the next state of the system.   To the point, one can  consider the  $\tau-$time steps defined by  $t_0 = 0 < t_1<\cdots < t_n <T$, and    the sequence of piecewise constant curves
   $$\rho_\tau =\sum_{i=1}^n\rho_i   \:  \chi_{]t_{i-1}  ,t_{i}]}  , $$
   where  \begin{equation}\label{steepestdescent0}
   	\rho_i= \argmin_{\rho \in X} \Big\{  \mathcal E  ( \rho) + W^\tau(\rho_{i-1}, \rho)\Big\} ,\quad \hbox{   for }i=1,...n, \hbox{ with }\rho_0=\rho(0).
   \end{equation}
   Then, we expect  the limit as $\tau\to 0$  of $\rho_\tau$ converges to  the solution of a continuous evolution problem.   
   Working within the framework of   Hilbert  space $(X,\Vert -\Vert),$ the general theory suggests to connect $W^\tau(\rho(t), \rho)$ to $\Vert\rho(t)- \rho \Vert^2/2\tau $   to  obtain a representation of the solution through  the gradient flow equation:
   \begin{equation}\label{evol0}
   	\left\{ \begin{array}{ll} \rho_t(t)    +\partial \mathcal E(\rho(t)) \ni 0\quad \hbox{ in }(0,T) \\  \\ 
   		\rho(0)\hbox{ is given},  \end{array}\right. 
   \end{equation} 
   where $\partial \mathcal E(\rho)$ denote sthe sub-differential (possibly multi-valued map) of the functional $\rho\in X\to \mathcal E(\rho)\in (-\infty,\infty].$   The expression \eqref{steepestdescent0} can be related to   both to the resolvent associated with the operator $\partial \mathcal E $  and Euler-Implicit discretisation of \eqref{evol0}.  The   evolution problem  in turn  is   a gradient flow  in a Hilbert  space  which describes the evolution of a continuous curve $t\to \rho(t)\in X$ that follows the direction of steepest descent of the  functional $\mathcal E$ (cf. \cite{Br}).  It is possible also to consider a more general situation where  $(X,d)$ is metric space and  $W^h$  is build on the distance $d$. In this case, the expression \eqref{steepestdescent0} reveals   the minimizing movement scheme à la  De Giorgi (see \cite{AGS}). 
   The approach resulted in a broad theory of gradient flows in a metric space where the derivatives $  \rho_t  $ and $ \partial  \mathcal E(\rho)$ need to be interpreted in an appropriate   way in $(X,d)$ (cf. the book \cite{AGS}).

   \medskip  
   For instance, the porous medium-like equation
   \begin{equation}\label{PME} 
   	\partial_t \rho -\Delta \rho^m = 0 
   \end{equation}
   in a bounded domain  arises in various applications, especially in biology, to represent the evolution  of species as they strive to minimize their internal energy $\mathcal E(\rho),$ which depends inherently on the density $\rho.$  Since the works \cite{OttoPME} and \cite{JKO}, it has been established that the PDE \eqref{PME} subject to homogeneous Neuman boundary condition can  be derived by employing $\mathcal E(\rho)=\frac{1}{m-1}\int_\Omega  \rho^m $  in the context of $W_2-$Wasserstein distance by setting $W^\tau (\rho,\tilde \rho)= \frac{1}{2\tau} \mathcal W_2( \rho,\tilde \rho)^2$.   
   It is well known that it can be also derived by utilizing the steepest descent algorithm for the internal energy $\mathcal E(\rho)=\frac{1}{m+1}\int_\Omega  \rho^{m+1} $  in the context of $H^{-1}-$norm.  This is achieved by setting $W^\tau (\rho,\tilde \rho)= \frac{1}{2\tau} \Vert \rho-\tilde \rho\Vert _{H^{-1}}^2$  (cf. \cite{Barbu,IgGF}).     These concepts can be expanded to accommodate some monotone nonlinearity $\eta(r)$ instead of $r^m,$ both for $W_2-$Wasserstein distance and also $H^{-1}-$norm   frameworks (cf. \cite{KM,Dam,DK,Kenmochi}).  These frameworks  can be also extended to nonlinear diffusion operators (see \cite{Agueh}  for the $W_c-$Wasserstein distance framework, and the recent work \cite{IgGF} for the the case a generalized dual Sobolev  approach). 
     
   \medskip
    This paper focuses on a system with two species $(k=1,2)$, represented by a couple of densities, $(\rho_1, \rho_2),$ in  the state space   $X=X_1\times X_2.$  To capture the overcrowding dispersal dynamic, we consider an internal energy function dependent on the sum of both densities ; i.e.  $\E(\rho)=\tilde \E(\rho_1+\rho_2).$  
    We employ a proximal optimization algorithm to construct a sequence of density pairs $(\rho_1 ,\rho_2)$ starting from initial densities $(\rho_{01}, \rho_{02})$. The algorithm iteratively updates the densities using the following steepest descent formula:
   	\begin{equation}\label{steepestdescent1}
   	\rho^i= \argmin_{\rho \in X} \Big\{  \tilde \E(\rho_1+\rho_2)   + W^\tau_1(\rho_1^{i-1}, \rho_1)+ W^\tau_2(\rho_2^{i-1}, \rho_2) \Big\} .
   \end{equation}
   Here,   $W^\tau_k$  measures the work required for each species $k$  to move from state $\rho_k^{i-1}$ to state $\rho_k,$ in a  small time step $\tau>0.$  For PME nonlinear diffusion process,   the work in \cite{KM}  suggests to take  $W^\tau_k $   defined by the  $\mathcal W_2$-Wasserstein distance and $ \E(\rho )=\frac{1}{m-1}\int_\Omega  (\rho_1+\rho_2)^m ,$ where we assume to simplify the presentation that the drift is equal to $0.$ This approach leads to  interesting optimal transportation interpretation of the solution at the discrete time level  using Kantorovich potentials, and  transports maps  for each specie.     However, the continuum limit seems to be complex, and there does not seem to be a PDE approach to yield well-posedness on the continuum PDE, even if  the authors seem   to expect  to cover with this approach (at least for the particular situations   where the solutions are well-behaved  to stay  separated throughout the evolution, with stable interface in between them)  the solutions of the  cross-diffusion system :  
  \begin{equation}\label{CDOTM}
  	  \left\{ \begin{array}{ll} \partial_t{\rho_1}   -\frac{m}{m-1}\:  \nabla \cdot \left( \rho_1\:  \nabla  (\rho_1+\rho_2)^m \right)=0 \\  \\  
   \partial_t	{\rho_2}   -\frac{m}{m-1}\:   \nabla \cdot \left( \rho_2\:  \nabla     (\rho_1+\rho_2)^m \right)=0   
   \end{array}\right.  \quad \hbox{ in }(0,\infty)\times \Omega.  
  \end{equation} 
   	 	This system, subject to homogeneous Neumann boundary conditions and initial data, describes how both species avoid overcrowding regions through hyperbolic dynamics   in the spirit of  \eqref{crossgen2}.  	However, in this paper we deviate from the previous approach and proposes using the minimum flow problem to measure the work required for state transitions. We  consider:
    \begin{equation}\label{steepestdescent2}
   		\rho_i= \argmin_{\rho \in X} \Big\{    \tilde \E(\rho_1+\rho_2)     + \frac{1}{2\tau}\: I_1(\rho_1^{i-1}, \rho_1)+ \frac{1}{2\tau}\:  I_2(\rho_2^{i-1}, \rho_2) \Big\} ,
   	\end{equation}
   	 	where $I_k(\rho_k^{i-1}, \rho_k)$ minimizes the following cost function:
   	  \begin{equation}\label{balanceEq0}
   	\frac{1}{2} \int_\Omega \sigma_k\: \vert \omega\vert^2 -  \langle  \omega\cdot \nu,g_k\rangle_{\Gamma_{D_k}}   
   \end{equation}
   	 over $\omega\in L^2(\Omega)^N$ subject to the constraint:
\begin{equation}\label{div0}
	 -\nabla \cdot (\sigma_k\: \omega) = \rho_k^{i-1} - \rho_k, \quad \text{ in } \Omega, 
\end{equation}
   	 	with possible boundary conditions for $\sigma_k\: \omega\cdot \nu$ on $\partial\Omega\setminus \Gamma_{N_k}.$ 
   	The goal of $I_k(\rho_k^{i-1}, \rho_k)$   is to find the most efficient "traffic scheme" for transferring mass from $\rho_k^{i-1}$  to  $\rho_k.$  This scheme considers both the quadratic work of the flow $\omega$  between densities  $\rho_k^{i-1}$ and $ \rho_k,$  and the charge $g_k$ for $ \phi \cdot \nu$ at at specific boundary locations $  \Gamma_{D_k}.$
   	 Since the balance equation,  \eqref{balanceEq0} captures the essence of mass transfer, the transfer fee effectively models the diffusion process with boundary conditions, enabling analysis of overcrowding dynamics for two interacting species. Interestingly, unlike the single-species case, we'll see that taking for instance $\mathcal E(\rho)=\frac{1}{m+1}\int_\Omega ( \rho_1+\rho_2)^{m+1} $ this approach leads to a fundamentally different cross-diffusion system generalizing in some sense \eqref{CDOTM} (see Remark \ref{RspatialR}). This new system is parabolic in nature : 
   	 \begin{equation} 
   	 	\partial_t 	\rho _k- \nabla \cdot(\sigma_k\: \nabla \eta_k) = 0 ,  \quad  \hbox{ for }k=1,2,\quad \hbox{ in }(0,\infty)\times \Omega, 
   	 \end{equation} 
   	 where the potential  $\eta _k$ satisfies   the state equations 
   	  \begin{equation}  
   	 	\eta_k - (\rho_1+\rho_2)^m \in \partial I\!\! I_{[0,\infty)} (\rho_k), \quad  \hbox{ for each }k=1,2.    \end{equation} 
   	 Thanks to Remark \ref{remarqyeBeta}, this is   clearly connected to the maximization problem 
   	 	$$\max_{\rho_1,\rho_2\geq 0 } \Big\{  \rho_1\eta_1 + \rho_2\eta_2- \frac{1}{m+1}(  \rho_1  +\rho_2)^{m+1}\Big \}.$$ 
 Moreover,   the approach  incorporates mixed boundary condition for each specie, respect to the choice of the boundary condition in \eqref{balanceEq0} and \eqref{div0}.

   	\bigskip 
 In the following section, we precise the assumptions  and give the rigorous proofs for the results in a general case. Then, we show how one can use this approach to prove the results of Section \ref{Smain}. We show also how to connect the approach with gradient flow in dual Sobolev space in the homogeneous case.

   \subsection{Notations}
   
   For the sake of simplicity and ease of presentation,    we adopt the following   formal  notations: 
   \begin{itemize}
   	
   		\item[-]   $ [H^1_{\Gamma_D}(\Omega)]^2 := H^1_{\Gamma_{D_1}}(\Omega) \times H^1_{\Gamma_{D_2}}(\Omega) .$ 

   		\item[-]   $ [H^{-1}_{\Gamma_D}(\Omega)]^2 = H^{-1}_{\Gamma_{D_1}}(\Omega) \times H^{-1}_{\Gamma_{D_2}}(\Omega) .$  

   		\item[-]    $  [H^{1/2}(\Gamma_D) ]^2:=H^{1/2}(\Gamma_{D_1}) \times H^{1/2}(\Gamma_{D_2})  $ 
   		and $    [H^{1/2}_{00}(\Gamma_D) ]^2:=H^{1/2}_{00}(\Gamma_{D_1}) \times H^{1/2}_{00}(\Gamma_{D_2}) . $ 
   	Moreover, for any $\xi\in [H^1 (\Omega)]^2 $ and $g\in  [H^{1/2}(\Gamma_D)]^2,$ we say $\xi_{/\Gamma_{D}}=g,$   if and only if 
   	$$\xi_{1/\Gamma_{D_1}}=g_1\hbox{ and }\xi_{2/\Gamma_{D_2}}=g_2 .  $$

   		\item[-]  $[H^{-1/2}(\Gamma_{D})]^2:=    H^{-1/2}(\Gamma_{D_1})  \times H^{-1/2}(\Gamma_{D_2})  .$ 
   	Moreover,  for any  $h=(h_1,h_2)\in [H^{-1/2}(\Gamma_{D})]^2   $ and $g=(g_1,g_2)\in  [H^{1/2}_{00}(\Gamma_D) ]^2,  $        we use the notation   
   	$$  [h,g]_{\Gamma_D} \:   :=   \langle h_1,    g_1  \rangle_{ { H^{-1/2}(\Gamma_{D_1}),  H^{1/2}(\Gamma_{D_1})} }  +  \langle h_2,    g_2 \rangle_{ { H^{-1/2}(\Gamma_{D_2}),  H^{1/2}(\Gamma_{D_2})} }  . $$
   	 	Without abusing,  for any  $ \phi= (\phi _1,\phi_2)\in H_{div}(\Omega) \times H_{div}(\Omega) ,$  we'll use again the notation    $  [  \phi\cdot  \nu,g]_{\Gamma_D}    $ to   point out the sum of the duality products $<\phi_k\cdot \nu,g_k>$ on $\Gamma_{D_k}$  ; i.e.  
   	$$	\begin{array}{l}   
   		[\phi\cdot  \nu,g]_{\Gamma_D}  :=  \langle  \phi_1\cdot  \nu,g_1  \rangle_{ { H^{-1/2}(\Gamma_{D_1}),  H^{1/2}(\Gamma_{D_1})} }   +  \langle    \phi_2\cdot  \nu,g_2   \rangle_{ { H^{-1/2}(\Gamma_{D_2}),  H^{1/2}(\Gamma_{D_2})} } .	\end{array} $$
   	%
   	
   \end{itemize}

   At last, for any  $ \phi= ( \phi _1,\phi_2)\in L^2(\Omega)^N \times L^2(\Omega)^N$ and $\sigma =(\sigma_1,\sigma_2)\in [L^\infty(\Omega)]^2,$     we  use  the notations 
   $$\sigma\: \phi: =(\sigma_1\phi_1,\sigma_2\:\phi_2)  $$
   $$ [\nabla \cdot \sigma  \phi]  := (\nabla \cdot (\sigma_1\: \phi_1), \nabla \cdot (\sigma_2\: \phi_2 )),\quad \forall \:   \phi= ( \phi _1,\phi_2)\in L^2(\Omega)^N \times L^2(\Omega)^N$$  and   $$[\sigma\:\nabla \eta]:= (\sigma_1\:\nabla\eta_1,\sigma_2\: \nabla \eta_2),\quad \forall \eta=(\eta_1,\eta_2) \in [H^1(\Omega)]^2 .$$ 
    Then, we use the notations  $F_\sigma[\phi],$   for the quantity  
\begin{equation}\label{condF}
	  F_\sigma[\phi]= \frac{1}{2}\left( \sigma_1\:\vert \phi_1\vert^2  + \sigma_2 \vert \phi_2\vert^2\right),\quad \hbox{ for any }\phi\in [L^2(\Omega)^N]^2 
\end{equation}
   and   $\beta[\rho]$ for 
   $$\beta[\rho]:= \beta(.,\rho_1+\rho_2),\quad \hbox{ for any } \rho\in [L^2(\Omega)]^2.  $$

  \subsection{Proximal minimization for cross-diffusion modeling}\label{SectionProx}

   Let    
  \begin{equation} \label{hypggamma}
 \pi =(\pi_1,\pi_2)\in [H^{-1/2}(\Gamma_N)]^2\quad \hbox{ and }\quad g=(g_1,g_2)\in [H^{1/2}_{00}(\Gamma_D)]^2 , \end{equation} 
be given.      We denote by $\tilde g $ the  $[H^1 (\Omega)]^2$ function such that   
\begin{equation}\label{tildeg}
\tilde g =0 \hbox{ on }\Gamma_N\hbox{  and }  \tilde g =g  \hbox{ on }\Gamma_D.
\end{equation} 

\medskip 
 To define rigorously the transition work, 	for any   $\mu  \in [L^2(\Omega)]^2$  and $\chi   \in  [L^2(\Omega)^N]^2$,  we consider   
$$ \A_{\pi}^{\chi}[\mu]_k= \Big\{  \omega   \in    L^2(\Omega)^N,\: - \nabla \cdot (\sigma_k\: \omega ) =\mu_k +\nabla \cdot \chi_k  \hbox{ in }\Omega \hbox{ and }   (\sigma_k\omega  +\chi_k)\cdot \nu =\pi_k  \hbox{ on }\Gamma_{N_k }    \Big\} ,\quad   k=1,2. $$
That is, for any $k=1,2,$  $  \omega     \in \A_{\pi}^{\chi}[\mu]_k $ if and only if $\omega  \in   L^2(\Omega)^N$ and 
$$\int_\Omega (\sigma_k\: \omega +\chi_k)\cdot \nabla z \: dx =\int_\Omega \mu_k\:  z  \: dx   +\langle \pi_k,z\rangle_{\Gamma_{N_k}}\quad \hbox{ for any }z\in H^1_{\Gamma_{D_k  }}(\Omega). $$
In some sense, for each $k=1,2,$  $\A_{\pi}^{\chi}[\mu]_k$ is the set of all mass fluxes which balance  the mass  $\mu_k,$ under the action of the  external  force $\chi_k$ and the boundary action $\pi_k$ on $\Gamma_k.$   
Recall that  (see for instance \cite{IgGF}),   for any  $k=1,2,$  we have 
$$  \A_{\pi}^{\chi}[\mu]_k  \neq\emptyset,\quad \hbox{ for any }\mu \in  [L^2(\Omega)]^2  \hbox{ and }\chi\in   [L^2(\Omega)^N]^2  . $$

   \bigskip 
 Now, for any  $\mu=(\mu_1,\mu_2)\in [L^2(\Omega)]^2$ and  $\chi=(\chi_1,\chi_2) \in  [L^2(\Omega)^N]^2 ,$   we consider the  
 optimization problem 
 $$ \N_{\pi,g}^\chi [\mu]   :=    	 	\inf_{\rho,\phi}\left \{\int_\Omega   \beta[\rho] +  \int_\Omega F_\sigma[ \phi ]  -   [( \sigma\: \phi+\chi)\cdot  \nu, g]_{\Gamma_D}  \:  :\:     \rho\in [L^2_+(\Omega)]^2    ,\:   \phi \in     \A_{\pi}^{\chi}[\mu-\rho]   \right\},  $$    
 where 
 $$   \A_{\pi}^{\chi}[\mu-\rho]:=  \A_{\pi}^{\chi}[\mu-\rho]_1\times    \A_{\pi}^{\chi}[\mu-\rho]_2  .$$
%
  See that $ \N_{\pi,g}^\chi [\mu]$ may be written as 
  $$ \N_{\pi,g}^\chi [\mu]  = \inf_{\rho,\omega}\left \{\int_\Omega   \beta[\rho] + \underbrace{	\I_{\pi,g}^{\chi}[\mu-\rho]_1 +	\I_{\pi,g}^{\chi}[\mu-\rho]_2  }_{	\I_{\pi,g}^{\chi}[\mu-\rho]}  \: :\:  \rho\in [L^2_+(\Omega)]^2\right\},  $$    
 where  
 \begin{equation}\label{Imu}
 	\I_{\pi,g}^{\chi}[\mu-\rho]_k := 	\inf_{\omega}\left \{   \frac{1}{2}\int_\Omega \sigma_k\: \vert   \omega   \vert^2   -\langle (\sigma\: \omega+\chi_k)\cdot \nu, g_k\rangle_{\Gamma_{D_k}} \:  :\:    \omega\in  \A_{\pi}^\chi[\mu-\rho]_k   \right\}.
 \end{equation}
 Here the quantity  $ \I_{\pi,g}^\chi[\mu-\rho] $ represents   the transition work associated  with the distribution of mass  $\mu-\rho$, and the term $\nabla \cdot \chi $  can be utilized to represent a significant external influence exerted on the system by an external force, $\chi$ (along with boundary conditions).

  \medskip

 Thanks to  \cite{IgGF}, remember that 
  \begin{proposition} (cf. \cite{IgGF}) \label{Lduals} 
For any $k=1,2,$ we have  
  	\begin{equation}\label{std}
  		\I_{\pi,g}^{\chi}[\mu]_k 
  		= 	\max_{z\in H^1(\Omega),\: z_{/\Gamma_{D _k}}=g _k }\left \{   \int_\Omega  \mu_k\: z    -  \int_\Omega \chi_k \cdot \nabla  z    - \frac{1}{2}   \int_\Omega \sigma_k\:  \vert \nabla z\vert^2    +  \langle 	\pi_k,z \rangle_{\Gamma_{N_k}}   \right\}   .  
  \end{equation}
  Moreover, $ \omega$ and $z $ are optimal if  and only if $\omega=\nabla z$ and $z$  is the unique solution of   the following PDE 
  \begin{equation}
  	\label{system1}
  	\left\{
  	\begin{array}{ll}
  		-\nabla\cdot(\sigma_k\: \nabla  z)  =\mu_k -\nabla \cdot \chi_k   
  		\quad & \hbox{ in } \Omega,\\  \\
  		(\sigma_k\:\nabla z +\chi_k)\cdot \nu  =  \pi_k & \hbox{ on }\Gamma_{N _k}\\  \\
  		z  =  g_k ,\quad    & \hbox{ on }\Gamma_{D _k}.
  	\end{array}
  	\right.
  \end{equation} 
  \end{proposition}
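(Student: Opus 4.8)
The plan is to read \eqref{Imu} as a strictly convex quadratic minimization of the functional
$$J_k(\omega):=\frac12\int_\Omega\sigma_k\,|\omega|^2-\langle(\sigma_k\,\omega+\chi_k)\cdot\nu,g_k\rangle_{\Gamma_{D_k}}$$
over the affine constraint set $\A_\pi^\chi[\mu]_k$, and to identify the right-hand side of \eqref{std} as its Fenchel dual. Writing
$$G_k(z):=\int_\Omega\mu_k\,z-\int_\Omega\chi_k\cdot\nabla z-\frac12\int_\Omega\sigma_k\,|\nabla z|^2+\langle\pi_k,z\rangle_{\Gamma_{N_k}}$$
for $z\in H^1(\Omega)$ with $z_{/\Gamma_{D_k}}=g_k$, the whole statement will collapse onto the single algebraic identity $J_k(\omega)-G_k(z)=\tfrac12\int_\Omega\sigma_k\,|\omega-\nabla z|^2$, valid for every feasible $\omega$ and admissible $z$.

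First I would establish weak duality. Fix $\omega\in\A_\pi^\chi[\mu]_k$; then $\sigma_k\omega+\chi_k\in H_{div}(\Omega)$ because $\sigma_k\in L^\infty$, $\omega,\chi_k\in L^2$ and $\nabla\cdot(\sigma_k\omega+\chi_k)=-\mu_k\in L^2(\Omega)$. Applying the Green/normal-trace calculus \eqref{tracexpression} to $\sigma_k\omega+\chi_k$ tested against $z$ and splitting $\partial\Omega=\Gamma_{D_k}\cup\Gamma_{N_k}$ — on $\Gamma_{N_k}$ the normal trace is $\pi_k$ by the definition of $\A_\pi^\chi[\mu]_k$, while on $\Gamma_{D_k}$ one has $z=g_k$ — yields
$$\langle(\sigma_k\omega+\chi_k)\cdot\nu,g_k\rangle_{\Gamma_{D_k}}=\int_\Omega(\sigma_k\omega+\chi_k)\cdot\nabla z-\int_\Omega\mu_k\,z-\langle\pi_k,z\rangle_{\Gamma_{N_k}}.$$
Substituting into $J_k(\omega)$, every term linear in $z$ cancels against $G_k(z)$ and one is left with $\tfrac12\int_\Omega\sigma_k\,|\omega-\nabla z|^2\ge0$, using $\sigma_k\ge\sigma_{\min}>0$ from \eqref{minsigma}. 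Hence $J_k(\omega)\ge G_k(z)$ always, with equality iff $\omega=\nabla z$ a.e.

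Next I would produce an optimal dual $z^\ast$ and close the gap. Writing $z=\tilde g_k+w$ with $w\in H^1_{\Gamma_{D_k}}(\Omega)$, the functional $-G_k$ is strictly convex and coercive: its quadratic part dominates $\tfrac{\sigma_{\min}}2\|\nabla w\|_{L^2}^2$, and the Poincaré inequality on $H^1_{\Gamma_{D_k}}(\Omega)$ — available since $\mathcal H^{N-1}(\Gamma_{D_k})>0$ — upgrades this to full $H^1$-coercivity, absorbing the continuous linear terms (including $\langle\pi_k,\cdot\rangle_{\Gamma_{N_k}}$). By the direct method $G_k$ attains its maximum at a unique $z^\ast$, whose Euler–Lagrange equation, $\int_\Omega(\sigma_k\nabla z^\ast+\chi_k)\cdot\nabla\zeta=\int_\Omega\mu_k\zeta+\langle\pi_k,\zeta\rangle_{\Gamma_{N_k}}$ for all $\zeta\in H^1_{\Gamma_{D_k}}(\Omega)$, is exactly the weak formulation asserting $\nabla z^\ast\in\A_\pi^\chi[\mu]_k$; that is, $z^\ast$ solves \eqref{system1} and is unique by strict convexity. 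Setting $\omega^\ast:=\nabla z^\ast$ gives a feasible primal point with $J_k(\omega^\ast)=G_k(z^\ast)$ by the equality case, so the primal infimum equals the dual maximum and both are attained; strict convexity of $J_k$ makes $\omega^\ast$ the unique primal minimizer, and the difference identity forces any optimal pair to satisfy $\omega=\nabla z$, which is the stated optimality characterization.

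The convexity and coercivity bookkeeping is routine. The step demanding genuine care — the main obstacle — is the integration by parts across the \emph{mixed} boundary: one must justify that the normal trace $(\sigma_k\omega+\chi_k)\cdot\nu$ of an $H_{div}$ field splits consistently into its $H^{-1/2}(\Gamma_{D_k})$ and $H^{-1/2}(\Gamma_{N_k})$ pieces and pairs correctly against $g_k\in H^{1/2}_{00}(\Gamma_{D_k})$ and the trace of $z$, using precisely the duality conventions fixed in \eqref{notation1}--\eqref{tracexpression}. Once this trace calculus is in place, the identity $J_k-G_k=\tfrac12\int_\Omega\sigma_k\,|\omega-\nabla z|^2$, and with it the whole proposition, follows at once.
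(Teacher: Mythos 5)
Your proof is correct and complete. Note that the paper does not actually prove Proposition \ref{Lduals}: it is recalled from \cite{IgGF} with a bare citation, and the only duality argument spelled out in the paper (Lemma \ref{Kconvecl.s.c.} and Theorem \ref{tduality1}) runs through the abstract perturbation/biconjugation machinery $-K[0]=-K^{**}[0]=\min K^{*}$ of Ekeland--Temam. Your route is genuinely more elementary for this scalar quadratic case: the completing-the-square identity $J_k(\omega)-G_k(z)=\tfrac12\int_\Omega\sigma_k|\omega-\nabla z|^2$ gives weak duality, the equality case, and the optimality characterization $\omega=\nabla z$ in one stroke, and the direct method plus the Euler--Lagrange equation closes the gap by exhibiting a feasible primal point $\nabla z^\ast$. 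What the abstract approach buys instead is uniformity — it is what the paper needs for the genuinely coupled functional $\N^{\chi}_{\pi,g}[\mu]$ in Theorem \ref{tduality1}, where no such explicit square completion is available — but for \eqref{std} your argument is self-contained and arguably preferable. Two small points worth flagging: (i) you correctly work with the weak formulation of $\A^{\chi}_{\pi}[\mu]_k$ (divergence of $\sigma_k\omega+\chi_k$ equal to $-\mu_k$, normal trace $\pi_k$ on $\Gamma_{N_k}$), which is the convention used consistently in the paper's weak forms; the displayed strong form \eqref{system1} carries a sign slip ($\mu_k-\nabla\cdot\chi_k$ versus $\mu_k+\nabla\cdot\chi_k$) that is incompatible with its own Neumann line, so your Euler--Lagrange equation is the right one. (ii) The pairing $\langle\pi_k,z\rangle_{\Gamma_{N_k}}$ for $z$ with nonzero trace on $\Gamma_{D_k}$ should strictly be read as $\langle\pi_k,z-\tilde g_k\rangle_{\Gamma_{N_k}}$ so that the test function lies in $H^{1/2}_{00}(\Gamma_{N_k})$; since $\tilde g_k$ vanishes on $\Gamma_{N_k}$ this changes nothing in your cancellation, and your identification of the mixed-boundary trace splitting as the one delicate step is exactly right — it is legitimate here precisely because $\overline\Gamma_{D_k}\cap\overline\Gamma_{N_k}=\emptyset$ and $g_k\in H^{1/2}_{00}(\Gamma_{D_k})$.
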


 Coming back to the problem $ \N_{\pi,g}^\chi [\mu],$ we have the following duality result. 
 \begin{theorem}	\label{tduality1}
 	For any $\mu=(\mu_1,\mu_2)\in [L^2(\Omega)]^2$ and $\chi=(\chi_1,\chi_2)\in  [L^2(\Omega)^N]^2 ,$   we have  :  
\begin{equation}
 \N_{\pi,g}^\chi [\mu]  = 	\underbrace{ \max_{\eta\in [H^1(\Omega)]^2,\eta_{/\Gamma_D}=g} \left\{  \int_\Omega  \mu\cdot\eta+\int_\Omega [\chi\cdot\nabla\eta]  -   \int_\Omega   F_\sigma[\nabla\eta]   -\int_{\Omega } \beta^*[\eta]   +  [\pi,\eta]_{\Gamma_N}  \right\}}_{ =:D_{\pi,g}^{\chi}[\mu]} ,
\end{equation}     
where                                                                                                                              
$$\beta^*[\eta]:= \beta^*\left(x, \eta_1\vee \eta_2\right),\quad \hbox{ for any }\eta \in [L^2(\Omega)]^2.$$  
 	Moreover, $(\rho,\phi)$ and $\eta$ are solutions of  $ \N_{\pi,g}^\chi [\mu]  $ and $ D_{\pi,g}^{\chi}[\mu] ,$ respectively,  if and only if     $\phi_k= \nabla \eta_k $ and the couple $(\rho_k,\eta_k)$ satisfies the following system of PDE :  
 		\begin{equation}              	\label{PDE1}                                                                                                                    
 		\left\{\begin{array}{ll}                                                                                                 	\left.  \begin{array}{ll}                                                                                                      
 				\rho _k- \nabla \cdot(\sigma_k\:   \nabla \eta_k) =  \mu _k +\nabla \cdot \chi_k  \\               
 				\\  			   \eta_k \in \partial\beta(.,\rho_1+\rho_2)+\partial I\!\!I_{[0,\infty)}(\rho_k) 
 			\end{array}\right\} 	\quad & \hbox{ in } \Omega, \\  \\                                                                        
 			(\sigma_k\: \nabla\eta_k+\chi_k)  \cdot \nu  =  \pi_k   & \hbox{ on }\Gamma_{N _k}\\  \\                                        
 			\eta_k  =  g _k \quad    & \hbox{ on }\Gamma_{D _k} ,   	\end{array}  		\right. 	  \quad  \hbox{ for }k=1,2                                                                                                                     
 	\end{equation}  
 	in the sense that $\rho _k\in L^2_+(\Omega),$  $\eta_k\in H^1 (\Omega),$  $\eta_k=g_k$ on $\Gamma_{D _k},$  
 	$$ \eta_1\vee\eta_2 =:\tilde\eta \in \partial \beta(x,   \rho_1+ \rho_2),\: \quad  \eta_k -\tilde \eta    \in  \partial  I\!\! I_{[0,\infty)}(\rho_k),   \quad \hbox{ a.e.  in }\Omega,$$ 
 	and 
 			$$\int_\Omega  \rho _k\: \xi_k   + \int_\Omega(\sigma_k\:  \nabla\eta_k+\chi_k) \cdot \nabla \xi_k  =\int_\Omega \mu _k\: \xi_k     +\langle \pi_k, \xi_k \rangle_{\Gamma_k} , \quad \forall\:    \xi_k\in  H^1_{\Gamma_{_k} } (\Omega) .$$
  \end{theorem}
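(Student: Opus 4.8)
The plan is to read $\N_{\pi,g}^\chi[\mu]$ as a convex--concave saddle problem and to exhibit the saddle point explicitly, so that both the identity with $D_{\pi,g}^\chi[\mu]$ and the characterisation \eqref{PDE1} drop out simultaneously. Starting from the reduced form $\N_{\pi,g}^\chi[\mu]=\inf_{\rho\in[L^2_+(\Omega)]^2}\{\int_\Omega\beta[\rho]+\I_{\pi,g}^\chi[\mu-\rho]\}$ already recorded in the text, I would substitute into each $\I_{\pi,g}^\chi[\mu-\rho]_k$ the dual representation \eqref{std} of Proposition~\ref{Lduals} and write $\N_{\pi,g}^\chi[\mu]=\inf_{\rho\geq0}\sup_{\eta}L(\rho,\eta)$, where $\eta$ ranges over $\{\eta\in[H^1(\Omega)]^2:\eta_{/\Gamma_D}=g\}$ and
$$L(\rho,\eta):=\int_\Omega\beta[\rho]+\sum_{k=1,2}\Big(\int_\Omega(\mu_k-\rho_k)\,\eta_k-\int_\Omega\chi_k\cdot\nabla\eta_k-\frac12\int_\Omega\sigma_k\,|\nabla\eta_k|^2+\langle\pi_k,\eta_k\rangle_{\Gamma_{N_k}}\Big).$$
By \eqref{std}, $\sup_\eta L(\rho,\cdot)$ returns exactly $\int_\Omega\beta[\rho]+\I_{\pi,g}^\chi[\mu-\rho]$, so taking $\inf_\rho\sup_\eta$ recovers $\N_{\pi,g}^\chi[\mu]$. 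The functional $L$ is convex in $\rho$ and concave in $\eta$, strictly so in $\nabla\eta$ by \eqref{minsigma}.

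Weak duality $\N_{\pi,g}^\chi[\mu]\geq D_{\pi,g}^\chi[\mu]$ is then immediate. For a fixed admissible $\eta$, collapsing the $\rho$--dependent part of $L$ uses the Fenchel--Young inequality $\rho_1\eta_1+\rho_2\eta_2\leq\beta(x,\rho_1+\rho_2)+\beta^*(x,\eta_1\vee\eta_2)$, valid for $\rho_k\geq0$ by the Proposition characterising $\partial\B(x,\cdot)$, together with the identity $\B^*(x,d)=\beta^*(x,d_1\vee d_2)$. This gives $\inf_{\rho\geq0}L(\rho,\eta)=\int_\Omega\mu\cdot\eta-\int_\Omega[\chi\cdot\nabla\eta]-\int_\Omega F_\sigma[\nabla\eta]-\int_\Omega\beta^*[\eta]+[\pi,\eta]_{\Gamma_N}$, i.e. the dual integrand, and supremising over $\eta$ closes this half.

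For the reverse inequality I would produce a dual maximiser by the direct method. The concave functional defining $D_{\pi,g}^\chi[\mu]$ is coercive on the affine set $\{\eta_{/\Gamma_D}=g\}$: the term $-\int_\Omega F_\sigma[\nabla\eta]$ controls $\|\nabla\eta\|_{L^2}^2$ since $\sigma_k$ is bounded below by \eqref{minsigma}, Poincaré's inequality (legitimate because $\mathcal H^{N-1}(\Gamma_{D_k})>0$ in \eqref{boundarydecomp}) upgrades this to the full $[H^1(\Omega)]^2$--norm, the penalisation $-\int_\Omega\beta^*[\eta]\leq0$ is harmless as $\beta^*\geq0$ (from $\beta(x,0)=0$ in $(H1)$), and the linear contributions are absorbed. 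Weak upper semicontinuity along a maximising sequence follows from weak lower semicontinuity of the Dirichlet energy $\int_\Omega F_\sigma[\nabla\eta]$ and from Rellich compactness for the $\beta^*$--term (weak $H^1$ convergence gives strong $L^2$ convergence, and $(\eta_1,\eta_2)\mapsto\eta_1\vee\eta_2$ and $\beta^*(x,\cdot)$ are continuous, the quadratic control of $\beta^*$ supplied by $(H2)$ furnishing the needed uniform integrability). Hence a maximiser $\eta^\ast$ exists.

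Finally I would read off the optimality system at $\eta^\ast$ and thereby construct the primal optimiser. Testing stationarity of $D_{\pi,g}^\chi[\mu]$ against directions $\xi\in[H^1_{\Gamma_{D}}(\Omega)]^2$, every term is smooth except $\int_\Omega\beta^*[\eta]$; interchanging subdifferentiation and integration for this Carathéodory integrand yields a measurable selection $\rho^\ast(x)\in\partial\B^*(x,\eta^\ast(x))$ a.e., and the Euler--Lagrange identity becomes precisely the weak form of \eqref{PDE1} with $\phi_k^\ast=\nabla\eta_k^\ast$. By the equivalences of the Proposition on $\partial\B$, $\rho^\ast\in\partial\B^*(x,\eta^\ast)\Leftrightarrow\eta^\ast\in\partial\B(x,\rho^\ast)$, which is exactly the pair of state relations $\eta_1^\ast\vee\eta_2^\ast\in\partial\beta(x,\rho_1^\ast+\rho_2^\ast)$ and $\eta_k^\ast-\eta_1^\ast\vee\eta_2^\ast\in\partial I\!\!I_{[0,\infty)}(\rho_k^\ast)$. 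The pair $(\rho^\ast,\phi^\ast)$ is primal admissible and, by the equality case of Fenchel--Young, realises $\int_\Omega\beta[\rho^\ast]+\I_{\pi,g}^\chi[\mu-\rho^\ast]=D_{\pi,g}^\chi[\mu]$, so $\N_{\pi,g}^\chi[\mu]\leq D_{\pi,g}^\chi[\mu]$; with weak duality this gives equality and, running the chain backwards, characterises all optimal pairs. I expect the genuine difficulty to lie in this last subdifferential--selection step, namely justifying the pointwise inclusion $\rho^\ast\in\partial\B^*(x,\eta^\ast)$ from the variational stationarity of the nonsmooth integral $\int_\Omega\beta^*[\eta]$, alongside the coercivity bookkeeping, both of which rest on \eqref{minsigma}, \eqref{boundarydecomp}, and the quadratic lower bound $(H2)$.
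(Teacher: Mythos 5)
Your proposal is correct in substance but follows a genuinely different route from the paper. The paper runs the Fenchel--Rockafellar perturbation machinery: it introduces the value function $K[f]$ of the problem perturbed by a source $f\in[H^{-1}_{\Gamma_D}(\Omega)]^2$, proves in Lemma \ref{Kconvecl.s.c.} that $K$ is convex and l.s.c.\ (this is where the compactness of minimizing sequences is spent), concludes $-K[0]=\min_\eta K^*[\eta]$ by biconjugation, and computes $K^*$ explicitly; the optimality system \eqref{PDE1} is then extracted \emph{a posteriori} by subtracting the primal and dual optimal values and invoking the pointwise equality cases of Young's and Fenchel's inequalities, with existence of optimizers supplied separately by Proposition \ref{Pexistprimal}. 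You instead set up the inf--sup saddle functional built from the dual representation \eqref{std} of Proposition \ref{Lduals}, get weak duality for free, and close the gap constructively by producing the dual maximizer via the direct method and reading off its Euler--Lagrange system. The trade-off is real: the paper never has to differentiate the nonsmooth integral $\int_\Omega\beta^*[\eta]$, whereas your argument hinges on the step you yourself flag --- identifying $\partial\bigl(\int_\Omega\beta^*[\cdot]\bigr)(\eta^*)$ with the measurable selections $\rho^*(x)\in\partial\B^*(x,\eta^*(x))$ and applying a sum rule on the affine set $\{\eta_{/\Gamma_D}=g\}$. That step is legitimate (Rockafellar's interchange theorem for normal convex integrands applies because $(H2)$ gives $\beta^*$ at most quadratic growth, so $\int_\Omega\beta^*[\cdot]$ is finite and continuous on $[L^2(\Omega)]^2$, which also validates the chain rule through the embedding $H^1\hookrightarrow L^2$), but it must be written out for the proof to be complete; in exchange you obtain existence of the dual maximizer and the characterization of optimal pairs in one pass, without the separate l.s.c.\ argument for the value function. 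One bookkeeping remark: your dual integrand carries $-\int_\Omega[\chi\cdot\nabla\eta]$, which agrees with Proposition \ref{Lduals} and with the paper's own computation of $K^*$ at the end of its proof, whereas the displayed statement of $D_{\pi,g}^\chi[\mu]$ in Theorem \ref{tduality1} shows $+\int_\Omega[\chi\cdot\nabla\eta]$; this sign discrepancy is internal to the paper and not an error on your part.
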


  \bigskip 
To the proof of this theorem, we see first that we have. 
  
 \begin{proposition}\label{Pexistprimal}
 	For any $\mu=(\mu_1,\mu_2)\in [L^2(\Omega)]^2$ and $\chi\in  [L^2(\Omega)^N]^2 ,$    the problems $ \N_{\pi,g}^\chi [\mu]  $ and $ D_{\pi,g}^{\chi}[\mu] $ have  solutions. 
 \end{proposition}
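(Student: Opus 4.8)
The plan is to establish existence for each of the two problems separately by the direct method of the calculus of variations, arguing independently of the duality identity of Theorem~\ref{tduality1} (which is not yet available at this stage). For the primal problem $\N_{\pi,g}^\chi[\mu]$, the first step is to observe that the feasible set is nonempty: by the recalled nonemptiness of $\A_\pi^\chi[\mu-\rho]_k$ and the admissibility of $\rho=0$, the infimum is finite. For coercivity I would rewrite the Dirichlet boundary term using the constraint. For $\phi_k\in\A_\pi^\chi[\mu-\rho]_k$ one has $\nabla\cdot(\sigma_k\phi_k+\chi_k)=\rho_k-\mu_k$, so by the trace formula \eqref{tracexpression} with the fixed extension $\tilde g$ of \eqref{tildeg},
\[
\langle(\sigma_k\phi_k+\chi_k)\cdot\nu,\,g_k\rangle_{\Gamma_{D_k}}=\int_\Omega(\sigma_k\phi_k+\chi_k)\cdot\nabla\tilde g_k+\int_\Omega\tilde g_k\,(\rho_k-\mu_k).
\]
Then, writing $c:=\min_k\min_{\overline\Omega}\sigma_k>0$ from \eqref{minsigma}, a Young inequality absorbs $\int_\Omega\sigma_k\phi_k\cdot\nabla\tilde g_k$ into $\tfrac14\int_\Omega\sigma_k|\phi_k|^2$; the quadratic lower bound (H2) controls $\|\rho_1+\rho_2\|_{L^2}$; and, crucially, the pointwise sign constraint $0\le\rho_k\le\rho_1+\rho_2$ lets the remaining linear term $\int_\Omega\tilde g_k\rho_k$ be dominated by that quadratic. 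This yields a bound of the shape $\N_{\pi,g}^\chi[\mu]\ge c_0\big(\|\rho_1+\rho_2\|_{L^2}^2+\sum_k\|\phi_k\|_{L^2}^2\big)-C$.

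Next I would pick a minimizing sequence $(\rho^n,\phi^n)$. Coercivity bounds $\rho_1^n+\rho_2^n$ and $\phi^n$ in $L^2$, and the sign constraint $0\le\rho_k^n\le\rho_1^n+\rho_2^n$ then bounds each $\rho_k^n$ in $L^2$. Passing to a weakly convergent subsequence $\rho_k^n\rightharpoonup\rho_k$, $\phi_k^n\rightharpoonup\phi_k$ in $L^2$, the affine constraint defining $\A_\pi^\chi[\mu-\rho^n]_k$ (tested against fixed $z\in H^1_{\Gamma_{D_k}}(\Omega)$) passes to the limit, and $[L^2_+(\Omega)]^2$ is convex and closed, hence weakly closed, so $(\rho,\phi)$ is admissible. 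Finally $\int_\Omega\beta[\rho]$ and $\int_\Omega F_\sigma[\phi]$ are convex, hence weakly l.s.c., while the rewritten boundary term is weakly \emph{continuous}, being affine and continuous in $(\phi,\rho)$ against the fixed data $\sigma_k\nabla\tilde g_k$ and $\tilde g_k$ in $L^2$. Therefore $\liminf$ of the cost is at least its value at $(\rho,\phi)$, so $(\rho,\phi)$ is a minimizer.

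For the dual problem $D_{\pi,g}^\chi[\mu]$, the key preliminary observation is that the Legendre transform $\beta^*(x,\cdot)$ has at most quadratic growth — this is precisely dual to the quadratic lower bound (H2) — so $\int_\Omega\beta^*[\eta]<\infty$ for every $\eta\in[H^1(\Omega)]^2$ and the functional is proper and finite. The admissible set $\{\eta\in[H^1(\Omega)]^2:\eta_{/\Gamma_D}=g\}$ is a closed affine subspace; writing $\eta=\tilde g+\zeta$ with $\zeta\in[H^1_{\Gamma_D}(\Omega)]^2$, the Poincaré inequality (valid since $\mathcal H^{N-1}(\Gamma_{D_k})>0$) together with $-\int_\Omega F_\sigma[\nabla\eta]\le-\tfrac{c}{2}\|\nabla\eta\|_{L^2}^2$ and $-\int_\Omega\beta^*[\eta]\le0$ shows the objective is bounded above and that any maximizing sequence is bounded in $[H^1(\Omega)]^2$. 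I would then extract $\eta^n\rightharpoonup\eta$ in $[H^1(\Omega)]^2$, use the Rellich theorem to get $\eta^n\to\eta$ strongly in $[L^2(\Omega)]^2$, pass the trace condition to the limit, and conclude by upper semicontinuity: the linear terms (including $[\pi,\eta]_{\Gamma_N}$) converge, $-\int_\Omega F_\sigma[\nabla\eta]$ is weakly u.s.c. as the negative of a convex continuous functional, and $-\int_\Omega\beta^*[\eta]$ is u.s.c. along the strong $L^2$ convergence since $\int_\Omega\beta^*[\cdot]$ is convex and l.s.c.\ on $L^2$. Hence a maximizer exists; it is moreover unique in $\nabla\eta$ by the strict concavity coming from $F_\sigma$ in \eqref{condF}.

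The step I expect to be the main obstacle is the primal coercivity. Because the internal energy $\int_\Omega\beta[\rho]$ only controls the \emph{total} density $\rho_1+\rho_2$, there is no direct a priori bound on each $\rho_k$; the resolution rests on combining (H2) with the pointwise sign constraint $0\le\rho_k\le\rho_1+\rho_2$, and on rewriting the Dirichlet boundary contribution through the divergence constraint so that it becomes weakly continuous rather than merely bounded — otherwise the normal-trace term $\langle(\sigma_k\phi_k+\chi_k)\cdot\nu,g_k\rangle_{\Gamma_{D_k}}$ could not be handled directly under weak $L^2$ convergence.
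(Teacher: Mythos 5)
Your proposal is correct and follows essentially the same route as the paper: the direct method with a minimizing (resp.\ maximizing) sequence, weak $L^2$/$H^1$ compactness, passage to the limit in the affine divergence constraint, the trace-formula rewriting of the Dirichlet boundary term to make it weakly continuous, and weak lower (resp.\ upper) semicontinuity of the convex integrands. The only difference is one of detail: you spell out the coercivity estimate (absorbing the boundary term via Young's inequality and controlling each $\rho_k$ through $0\le\rho_k\le\rho_1+\rho_2$ and (H2)) and the dual maximization, both of which the paper dispatches with ``it is clear that'' and ``follows more or less the same ideas.''
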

\begin{proof}
	Let $(\rho^n,\phi^n)$ be a minimizing sequence $ \N_{\pi,g}^\chi [\mu]  .$  Thanks to the assumptions on   $\beta$,  it is clear that $\rho^n $ and    $\phi^n$  are bounded in $[L^2_+(\Omega)]^2 $   and $[L^2(\Omega)^N]^2$,   respectively. So, there exists a subsequence that we denote again by  $(\rho^n,\phi^n )\in   [L^2_+(\Omega)]^2\times [L^2(\Omega)^N]^2$, and $(\rho,\phi)\in   [L^2_+(\Omega)]^2\times [L^2(\Omega)^N]^2,$  such that 
	$$\rho^n\to \rho,\quad \hbox{ in } [L^2(\Omega)]^2 \hbox{-weak}^*$$ 
	and   $$\phi^n\to \phi,\quad \hbox{ in } [L^2(\Omega)^N]^2 \hbox{-weak} .$$ 
	Combining, in addition,  \eqref{tracexpression} with the fact that   $-[\nabla \cdot (\sigma\:\phi^n+\chi)]=\mu-\rho^n$ and $(\sigma\:\phi^n+\chi)\cdot \nu=\pi$ on $\Gamma_N,$ we see that 
	$$   [(\sigma\: \phi^n+\chi)\cdot \nu,g]_{\Gamma_D}\to   [(\sigma\:\phi+\chi)\cdot \nu ,g]_{\Gamma_D}. $$  
	Clearly $(\rho,\phi )$ is an admissible test function for the optimization problem $ \N_{\pi,g}^\chi [\mu]  $ ; i.e. $(\rho,\phi )\in \A_{\pi}^\chi [\mu-\rho]$.  Then, using the lsc and convexity of $\beta $ and $F_\sigma $, we deduce that  $(\rho,\phi )$ is a solution of the   optimization problem  $ \N_{\pi,g}^\chi [\mu]  .$   The proof for 
	$ D_{\pi,g}^{\chi}[\mu] $ follows more or less the same ideas.  \end{proof}
   
 \bigskip 
To prove Theorem \ref{tduality1}, let us consider the application $K  \: :\:   [H^{-1}_{\Gamma_D} (\Omega)]^2   \to \RR$ given by 
$$ K[f]  :=    	 \int_\Omega f_0\cdot   \tilde g -\int_\Omega [\overline f \cdot \nabla \tilde g] + 	\inf_{\rho,\phi }\left \{   \int_\Omega  \beta[\rho] +  \int_\Omega   F_\sigma[\phi] \: dx -  [ (\sigma\:\phi+\chi+ \overline f )\cdot  \nu, g]_{\Gamma_D}  \:\right.$$  $$\left. :\:   \rho\in  [L^2_+(\Omega)]^2   ,\:  \phi\in     \A_{\pi}^{\chi+\overline f}[\mu+f_{0}-\rho]    \right\},  $$
where   $f_0=(f_{01},f_{02})$ and $\overline f=(\overline f_1,\overline f_2)$ are given by the decomposition  $f_k=f_{0k}+\nabla \cdot \overline f_k$ in $H^{-1}_{\Gamma_{D_k}}(\Omega),$ for each $k=1,2.$   
 Then, since $ \N_{\pi,g}^\chi [\mu]  = K[0],$ it is enough to prove that $K[0]$ coincides with $D_{\pi,g}^{\chi}[\mu].$ To this aim we use duality techniques which involves  the Legendre transforms of  $K.$  This is the aim of the following lemma.

 \begin{remark} Thank to    \eqref{tracexpression},  we see  that  $K$ may be rewritten by using $\tilde g$ given by \eqref{tildeg}. Indeed, 	 thanks to \eqref{tildeg}, we have 	$$ [  (\sigma\:\phi   +\chi+ \overline f )\cdot  \nu, g]_{\Gamma_D}  
 	= \int_\Omega [(\sigma\:\phi+\chi+ \overline f)  \cdot \nabla \tilde g] -  \int_\Omega  (f_{0}+\mu-  \rho)\cdot  \tilde g     ,$$  
 	for any  $ \rho\in  [L^2_+(\Omega)]^2$ and $  \phi  \in  \A_{\pi}^{\chi+\overline f}[\mu+f_0-  \:  \rho].$ This implies   that  $K $ may be written as 
 	\begin{equation}\label{mofK}
 		\begin{array}{c} 
 			K     [f]   =    	 	\inf_{\rho,\phi }\left \{   \int_\Omega  \beta[\rho] +  \int_\Omega   F_\sigma[\phi] \: dx   - \int_\Omega [ (\sigma\:\phi   +\chi )  \cdot \nabla \tilde g] + \int_\Omega  (\mu-  \rho) \cdot \tilde g  \: \right.  \\  \\  \: 
 			\left. :\:   \rho\in  [L^2_+(\Omega)]^2,\:    \phi  \in  \A_{\pi}^{\chi+\overline f}[\mu+f_0- \rho]    \right\}. 
 	\end{array}  \end{equation} 
 	
 \end{remark}

\begin{lemma}\label{Kconvecl.s.c.}
We have 
	\begin{equation} \label{perturbduality}
		-K  [0] = -K  ^{**}[0] = \min_{ \eta \in [H^1_{\Gamma_D}(\Omega)]^2} K  ^{*}[\eta] ,
	\end{equation} 
\end{lemma}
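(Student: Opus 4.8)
The plan is to read \eqref{perturbduality} as an instance of convex duality: I would show that the value function $K$ is convex on $[H^{-1}_{\Gamma_D}(\Omega)]^2$, finite at the origin, and continuous there, and then let the three asserted equalities fall out of the Fenchel machinery applied to the dual pairing between $[H^{-1}_{\Gamma_D}(\Omega)]^2$ and $[H^1_{\Gamma_D}(\Omega)]^2$. Throughout I would work with the rewritten expression \eqref{mofK} for $K$, in which the boundary term has already been traded for volume integrals against $\tilde g$ (see \eqref{tildeg}), since that form makes the joint dependence on $(f,\rho,\phi)$ transparent.

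First I would establish that $K$ is convex. For fixed $f$, the integrand to be minimized in \eqref{mofK} is jointly convex in $(\rho,\phi)$: the map $\phi\mapsto\int_\Omega F_\sigma[\phi]$ is convex by \eqref{condF}, $\rho\mapsto\int_\Omega\beta[\rho]$ is convex by (H1), and the remaining terms are affine. Moreover the admissible set $\A_\pi^{\chi+\overline f}[\mu+f_0-\rho]$ is cut out by the affine constraint \eqref{div0} (with the Neumann condition on $\Gamma_N$), so it depends jointly affinely on $(f,\rho,\phi)$. Hence $K$ is the partial infimum over $(\rho,\phi)$ of a function jointly convex in $(f,\rho,\phi)$, and such an infimal projection is convex.

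Next I would check properness and continuity at $0$. Since $\A_\pi^\chi[\mu]_k\neq\emptyset$ for every $\mu$ and $\chi$, the constraint set is nonempty for each $f$, so $\mathrm{dom}\,K=[H^{-1}_{\Gamma_D}(\Omega)]^2$. Using $\beta\geq 0$ and $F_\sigma\geq 0$, and absorbing the linear-in-$\phi$ term $-\int_\Omega[(\sigma\phi+\chi)\cdot\nabla\tilde g]$ into the coercive quadratic term via Young's inequality and \eqref{minsigma}, I would bound $K[f]$ from below uniformly on bounded sets of $f$, so $K>-\infty$. For a bound from above near $0$, I would test with $\rho=0$ and a flux $\phi\in\A_\pi^{\chi+\overline f}[\mu+f_0]$ produced by solving \eqref{system1}, whose $L^2$-norm is controlled by $\|f\|$; this makes $K$ locally bounded above, and a finite convex function bounded above on a neighborhood is continuous there. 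Finiteness of $K[0]=\N_{\pi,g}^\chi[\mu]$ itself is guaranteed by Proposition~\ref{Pexistprimal}.

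With $K$ convex and continuous at $0$, the subdifferential $\partial K[0]$ is nonempty; any $\eta\in\partial K[0]$ satisfies the Fenchel equality $K[0]+K^*[\eta]=0$, so $K^*[\eta]=-K[0]$, while Fenchel--Young gives $K^*[\eta']\geq -K[0]$ for all $\eta'$. Thus $\eta$ minimizes $K^*$ over $[H^1_{\Gamma_D}(\Omega)]^2$ and $\min_{\eta'}K^*[\eta']=-K[0]$, which is the outer equality in \eqref{perturbduality} with the infimum attained; taking conjugates back yields $K^{**}[0]=-\min K^*=K[0]$, the middle equality. I expect the genuine work to lie in the continuity-at-$0$ step: finiteness of $K$ does not imply continuity in this infinite-dimensional setting, so the argument must exploit the quadratic coercivity of $F_\sigma$ together with the uniform solvability of the divergence constraint (the nonemptiness of $\A_\pi^\chi[\mu]_k$ with a norm estimate) to produce the local bound above that both closes the duality gap and forces attainment of the dual minimum.
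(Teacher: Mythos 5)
Your argument is correct, but it reaches \eqref{perturbduality} by a different branch of the classical duality theory than the paper does. The paper proves that $K$ is convex (by forming convex combinations of near-minimizers, exploiting the affine dependence of the constraint set on $(f,\rho,\phi)$) and \emph{weakly sequentially lower semi-continuous} (via the direct method: a priori bounds from (H1)--(H2) on minimizing sequences, weak $L^2$ limits, admissibility of the limit, l.s.c.\ of $\beta$ and $F_\sigma$), and then invokes Fenchel--Moreau to get $K[0]=K^{**}[0]$; the attainment of the dual infimum, i.e.\ the ``$\min$'' in \eqref{perturbduality}, is not produced by that route and rests on the separate existence statement for $D^{\chi}_{\pi,g}[\mu]$ in Proposition~\ref{Pexistprimal}. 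You instead establish convexity by infimal projection of a jointly convex function (same substance, cleaner packaging) and replace lower semi-continuity by \emph{continuity of $K$ at $0$}, obtained from the uniform lower bound and the upper bound furnished by the competitor $\rho=0$, $\phi$ solving \eqref{system1} (this uses $\beta(x,0)=0$ from (H1) and the norm-controlled solvability of the divergence constraint, both available). Continuity at $0$ gives $\partial K[0]\neq\emptyset$, and a single subgradient $\eta\in\partial K[0]$ simultaneously closes the duality gap and realizes the dual minimum, so you recover the full statement, attainment included, without appealing to Proposition~\ref{Pexistprimal}. The trade-off is that your route needs the quantitative local boundedness estimate (which the paper never states explicitly), whereas the paper's l.s.c.\ computation recycles compactness estimates used elsewhere; your route has the advantage of delivering the ``$\min$'' rather than an ``$\inf$'' as part of the same argument. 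One point to make explicit if you write this up: the conjugate $K^*$ must be taken with respect to the pairing of $[H^{-1}_{\Gamma_D}(\Omega)]^2$ with $[H^{1}_{\Gamma_D}(\Omega)]^2$, so that the subgradient you produce is indeed an element of $[H^{1}_{\Gamma_D}(\Omega)]^2$ as required by \eqref{perturbduality}.
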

\begin{proof}To prove  \eqref{perturbduality}, it is enough to prove that  	$K  $ is convex and l.s.c.  and      conclude   by classical duality results  (cf.  \cite{Ekeland}).  \\	
	\underline{Convexity :} For any $f,\: h\in [H^{-1} _{\Gamma_D}(\Omega)]^2,$ taking $(\rho_f,\phi _f)$ and $(\rho_h,\phi _h)$ the solutions corresponding to the optimization problems $K  [f]  $ and 	$K  [h]  $ respectively, one sees that  $$\underbrace{ t\phi _h+(1-t)\phi _f}_{\phi _t}\in \A_{\pi}^{\overline h_{t}}[\mu-\alpha\underbrace{(t\rho_h+(1-t)\rho_f}_{\rho_t}+\underbrace{(th_0+(1-t)f_0)}_{h_{0t}} ],$$ where 
	$\overline h_{t}= t\overline h+(1-t)\overline f,$ for any $t\in [0,1].$   Moreover, using the convexity of $\beta (x,.)$ and $F_\sigma,$ we have 
	$$ \int\beta[\rho_t]	+ \:  \int_\Omega F_\sigma[\phi _t] -	   [ (\sigma\:\phi  _t+\chi+\overline f_t)\cdot \nu,g]_{\Gamma_D}     \leq t \left(   \int_\Omega \beta[\rho_h]	+ \:  \int_\Omega F_\sigma[\phi _h]-[\phi _h+\chi+\overline h) \cdot \nu,g]_{\Gamma_D}  \right)$$ 
	$$ +  (1-t)\left(   \int_\Omega\beta[\rho_f]	+ \:  \int_\Omega F_\sigma[\phi_f] -[ (\sigma\:\phi  _f+\chi+\overline f)\cdot \nu,g]_{\Gamma_D} \right),$$ 
	which implies that 
	$$    K  [th+(1-t)f]  \leq  t K  [h]+(1-t)K  [f]  . $$
	\underline{Lower semi-continuity :}  Let us consider $f^n$ a sequence of $[H^{-1}_{\Gamma_D}(\Omega)]^2$ which converges to $f.$ That is a sequence of $[L^2(\Omega)]$ and $[L^2(\Omega)^N]$ functions $f_0^n$ and $\overline f^n$ respectively, such that 
	$$ [f^n,\xi]_\Omega = \int_\Omega f_0^n\cdot \xi -\int_\Omega [\overline f^n\cdot \nabla \xi] \to   \int_\Omega f_{0}\cdot \xi -\int_\Omega [\overline f\cdot \nabla \xi]  = [f,\xi]_\Omega  , \quad \forall \xi\in [H^1_{\Gamma_D}(\Omega)]^2.$$
	Let us prove that $K[f] \leq \liminf_{n\to\infty } K[f^n].$   To this aim, we  consider  $(\rho^n,\phi ^n)$ be  the solution corresponding to $K[f^n]$ ; i.e.  
	\begin{equation}\label{testphin}
		\phi ^n \in  \A_{\pi}^{\chi+\overline f^n}[\mu+f_0^n-  \:  \rho^n].  
	\end{equation}
	and
	$$	 K[f^n]   =    	    \int_\Omega   \beta[\rho^n] +  \int_\Omega F_\sigma[\phi ^n] \: dx   -\int_\Omega [ (\sigma\:\phi  ^n+\chi )  \cdot \nabla \tilde g  ]+ \int_\Omega  (\mu-  \rho^n)\cdot \tilde g, $$
	where we use \eqref{mofK}.    We can assume that $  K[f^n] $ is bounded. So,  there exists $C<\infty $ such that   
	\begin{equation} 
		\int_\Omega   \beta[\rho^n] +  \int_\Omega F_\sigma[\phi ^n] \: dx  \leq C  +\int_\Omega [ (\sigma\:\phi  ^n+\chi )  \cdot \nabla \tilde g  ]- \int_\Omega  (\mu-  \rho^n)\cdot \tilde g
	\end{equation} 
	Using  assumptions  $(H1)$ and $(H2)$ with Young formula,  we see that   $  \rho^n $ and    $\phi^n$  are bounded in $ [L^2_+(\Omega)]^2  $   and $  [L^2(\Omega)^N]^2$,   respectively. So, there exists $(\rho,\phi)\in   [L^2_+(\Omega)]^2\times [L^2(\Omega)^N]^2 $  and a sub-sequence that we denote again by  $(\rho^n,\phi^n )  $,  such that 
	$$\rho^n\to \rho,\quad \hbox{ in } [L^2(\Omega)]^2 \hbox{-weak}$$ 
	and   $$\phi^n\to \phi,\quad \hbox{ in }  [L^2(\Omega)^N]^2 \hbox{-weak} .$$ 
	Using \eqref{testphin}, we have 
	\begin{equation}
		\int_\Omega [ (\sigma\:\phi ^n-\chi)\cdot \nabla \xi ]= \int_\Omega (\mu-\rho^n)\cdot  \xi +[f^n,\xi]_{\Omega} +[ \pi,\xi]_{\Gamma_N},\quad \forall \xi\in [H^1_{\Gamma_D}(\Omega)]^2,   
	\end{equation}
	and, by  letting $n\to \infty,$ we get 
	\begin{equation}
		\int_\Omega [ (\sigma\:\phi -\chi)\cdot \nabla \xi] = \int_\Omega (\mu-\rho )\cdot \xi +[ f,\xi]_{\Omega}+[ \pi,\xi]_{\Gamma_N},  \quad \forall \xi\in [H^1_{\Gamma_D}(\Omega)]^2. 
	\end{equation} 
	This implies that  $  \phi  \in  \A_{\pi}^{\chi+\overline f}[\mu+f_0-  \:  \rho] .$      Then, using the l.s.c. and convexity of $\beta $ and $F  ,$   we have 
	\begin{eqnarray*}
		K[f]&\leq &    \int_\Omega  \beta[\rho] +  \int_\Omega   F_\sigma[\phi]    -\int_\Omega [(\sigma\:\phi+\chi )  \cdot \nabla \tilde g]  + \int_\Omega  (\mu-  \rho) \cdot  \tilde g   \\  \\ 
		& \leq &    \liminf_{n\to\infty } \left\{   \int_\Omega   \beta[\rho^n] +  \int_\Omega F_\sigma[\phi ^n]     -\int_\Omega [ (\sigma\:\phi  ^n+\chi )  \cdot \nabla \tilde g]  +  \int_\Omega  (\mu-  \rho^n) \cdot \tilde g \right\}  \\  \\  &=&      \liminf_{n\to\infty }  K[f^n].\end{eqnarray*} 
	Thus the result.

\end{proof}

\medskip 
\begin{proofth}{Proof of Theorem \ref{tduality1}}   
We only need to compute 
	$K_1^{*}.$      By definition,  for any $\eta \in [H^1_{\Gamma_D}(\Omega)]^2,$  we have

	$$ K^*[\eta] =\max\left\{  [ f,\eta ]_{\Omega} -  K[f ] \: :\:   f\in [H^1_{\Gamma_D}(\Omega)]^2\right\}    $$
	$$ = \max_{f_0,\overline f,\phi  }\Big\{  \underbrace{\int_\Omega  f_0\cdot\eta  -\int _\Omega [\overline f\cdot\nabla\eta]  - \int_\Omega \beta[\rho] 	- \:  \int_\Omega F_\sigma[\phi]  +\int_\Omega [ (\sigma\:\phi    +\chi )  \cdot \nabla \tilde g ] -\int_\Omega  ( \mu -\rho) \cdot \tilde g  }_{H[f_0,\overline f,\rho,\phi  ]}  $$  
	$$    \: :\:  (f_0,\overline f)\in [L^2(\Omega)]^2\times [L^2(\Omega)^N]^2,\: \phi  \in \A_{\pi}^{\chi+\overline f} [\mu-\rho+f_0],\: \rho\in [L^2_+(\Omega)]^2\Big\}.    $$
	Using \eqref{tracexpression}, we have 
	$$\int_\Omega  f_0\cdot\eta  -\int _\Omega [\overline f\cdot\nabla\eta] = \int_\Omega [(\sigma\:\phi+\chi)\cdot \nabla \eta ] -\int_\Omega (\mu-\rho)\cdot \eta -[\pi,\eta]_{\Gamma_N},$$ 
so that 
	\begin{eqnarray*} 
		H[f_0,\overline f,\rho,\phi  ] 
		&=& \int_\Omega ([\sigma\:\phi\cdot \nabla (\eta+\tilde g)]-F_\sigma[\phi]  ) +\int_\Omega (\rho\cdot (\eta+\tilde g) - \beta[\rho] ) \\  \\ 
		&  & - \int_\Omega \mu\cdot(\eta+\tilde g)   + \int_\Omega [\chi\cdot \nabla (\eta+\tilde g) ] -[\pi,\eta]_{\Gamma_N}. \end{eqnarray*}
This implies  that 
	\begin{eqnarray*}
		K^*[\eta] 
		& =& 	\max_{\rho,\phi  }\left\{  \int_\Omega ([\sigma\:\phi\cdot \nabla (\eta+\tilde g)]-F_\sigma[\phi]  ) +\int_\Omega (\rho\cdot (\eta+\tilde g) - \beta[\rho] ) - \int_\Omega \mu\cdot(\eta+\tilde g)] \right.  \\  \\ &  & \hspace*{3cm}\left.   + \int_\Omega \chi\cdot \nabla (\eta+\tilde g)   -[ \pi,\eta]_{\Gamma_N}      \: :\:    (\rho,\phi  )\in [L^2_+(\Omega)]^2\times [L^2(\Omega)^N]^2 \right\}   \\  \\ 
		&=&  \max_{\phi   \in [L^2(\Omega)^N]^2  }  \int_\Omega ([\sigma\:\phi\cdot \nabla (\eta+\tilde g)]-F_\sigma[\phi]  ) +  \max_{\rho \in [L^2_+(\Omega)]^2}  \int_\Omega (\rho\cdot (\eta+\tilde g) - \beta[\rho] )    \\  \\ &  & \left. \hspace*{3cm}- \int_\Omega \mu\cdot(\eta+\tilde g)   + \int_\Omega  [\chi\cdot \nabla (\eta+\tilde g)]   -[ \pi,\eta]_{\Gamma_N}  \right\}   .
	\end{eqnarray*}
	Using the assumptions $(H1)$ and $(H2)$, we deduce    that  
	\begin{eqnarray*}
		K^*[\eta]&= & \int_\Omega F_\sigma[\nabla (\eta+\tilde g)]   +\int_\Omega \beta^*[\eta+\tilde g]  - \int_\Omega \mu\cdot(\eta+\tilde g)   + \int_\Omega [\chi\cdot \nabla (\eta+\tilde g)]   -[ \pi,\eta]_{\Gamma_N} .
	\end{eqnarray*}
	Combining this with \eqref{perturbduality}, we obtain 
	\begin{eqnarray*}
		-K[0]&=&  
		\min_{ \eta \in [H^1(\Omega)]^2} \int_\Omega F_\sigma[\nabla (\eta+\tilde g)]    +\int_\Omega \beta^*[\eta+\tilde g]  - \int_\Omega \mu\cdot(\eta+\tilde g)   + \int_\Omega [\chi\cdot \nabla (\eta+\tilde g) ]  -[ \pi,\eta]_{\Gamma_N} \\  \\
		&=& \min_{ \eta \in [H^1(\Omega)]^2,\: \eta_{/\Gamma_D}=g}   \int_\Omega   F_\sigma[\nabla \eta]  +\int_\Omega  \beta^*[\eta] - \int_\Omega \mu\cdot \eta    + \int [\chi\cdot \nabla \eta ]   -[ \pi,\eta]_{\Gamma_N}. 
	\end{eqnarray*}
	Thus the duality  $\N_{\pi,g}^\chi (\mu) = D_{\pi,g}^\chi( \mu) . $ 
	Now, thanks to Lemma \ref{Pexistprimal}, let us consider $(\rho,\phi  )$ and $\eta$ be the solutions of $\N_{\pi,g}^\chi [\mu] =D_{\pi,g}^{\chi}[\mu] .$  We have 
	$$   \int_\Omega  \mu\cdot\eta-\int_\Omega [\chi\cdot \nabla\eta]  -\int_\Omega F_\sigma[\nabla \eta ]    -\int_{\Omega }  \beta^*[\eta]   + [ \pi,\eta]_{\Gamma_N}  $$ 
	$$=  \int_\Omega   \beta[\rho] +  \int_\Omega F_\sigma [\phi]  -  [  (\sigma\:\phi   +\chi)\cdot  \nu, g]_{\Gamma_D} .$$ 
	Since $ \phi   \in \A_{\pi}^\chi [\mu-\rho] ,$ $\eta \in [H^1(\Omega)]^2$ and $\eta =g$ on $\Gamma_D,$  we also have   
	$$\int_\Omega  \rho\cdot \eta   + \int_\Omega[(\sigma\:\phi+ \chi)   \cdot \nabla \eta]  =\int_\Omega \mu\cdot \eta    +[ \pi , \eta ]_{\Gamma_N}  + [  (\sigma\:\phi   +\chi)\cdot \nu, g ]_{\Gamma_D}   . $$ 
	Combining both equation, we get 
	$$\int_\Omega F_\sigma[\phi]     + \int_\Omega F_\sigma[\nabla  \eta ]   -\int_\Omega [\sigma\:\phi\cdot \nabla \eta]     =  \int_\Omega \eta\cdot  \rho -  \int_{\Omega }  \beta^*[\eta]  -  \int_{\Omega } \beta [\rho] . $$
 	Then using moreover the fact that $ [\phi\cdot \nabla (\eta+\tilde g)] \leq F_\sigma[\nabla (\eta+\tilde g)]+ F_\sigma[\phi] $
 	and $\rho\cdot \eta \leq \beta^*[\eta]  +    \beta [\rho],$ a.e. in $\Omega,$ we deduce that 
 	  $ \int_\Omega F_\sigma[\nabla (\eta+\tilde g)]     + \int_\Omega F_\sigma[\phi]     =   [\phi\cdot  \nabla \eta]$ and $ \beta^*[\eta]  +    \beta [\rho] =\rho\cdot \eta$ ,     a.e. in $\Omega,$ for each $k=1,2.$  Thus   $(\rho,\eta,\phi  )$ is a solution of the PDE \eqref{PDE2}.  For the the  proof of the converse part, one sees first directly that $D_{\pi,g}^{\chi}[\mu] \leq \N_{\pi,g}^\chi[\mu].$  Then, by working with the solution of  \eqref{PDE2} one proves     $D_{\pi,g}^{\chi}[\mu] = \N_{\pi,g}^\chi[\mu] .$

\end{proofth}

\bigskip 
To end up this section, we give the following result  which is a direct consequence of Theorem \ref{tduality1} and which will be useful for the sequel

\begin{corollary}\label{CorEst}
	For any  $\mu=(\mu_1,\mu_2)\in [L^2(\Omega)]^2$, $\pi =(\pi_1,\pi_2)\in [H^{-1/2}(\Gamma_N)]$  and $g =(g_1,g_2)\in [H^{1/2}_{00}(\Gamma_D)]$   the system of PDE 
		\begin{equation}
		\label{PDE2}
		\left\{\begin{array}{ll} 
			\left.  \begin{array}{ll} 
				\rho _k-\nabla \cdot(\sigma_k\:   \nabla \eta_k)  =  \mu _k -\nabla \cdot \chi_k     \\ 
				\\  			   \eta_k \in \partial\beta(.,\rho_1+\rho_2)+\partial I\!\!I_{[0,\infty)}(\rho_k)   
			\end{array}\right\} 	\quad & \hbox{ in } \Omega, \\  \\
			(\sigma_k\: \nabla\eta_k+\chi_k)  \cdot \nu  =  \pi_k   & \hbox{ on }\Gamma_{N _k}\\  \\
			\eta_k  =  g _k ,\quad    & \hbox{ on }\Gamma_{D _k}
		\end{array}
		\right\} \quad  \hbox{ for }k=1,2, 	 
	\end{equation} 
	has a solution $(\rho,\eta)$ in  the sense that, for each $k=1,2, $    	$\rho _k\in L^2_+(\Omega),$  $\eta_k\in H^1 (\Omega),$  $\eta_k=g_k$ on $\Gamma_{D _k},$  
	$$ \eta_1\vee\eta_2 =:\tilde\eta \in \partial \beta(x,   \rho_1+ \rho_2),\: \quad  \eta_k -\tilde \eta    \in  \partial  I\!\! I_{[0,\infty)}(\rho_k),   \quad \hbox{ a.e.  in }\Omega,$$ 
	and 
	$$\int_\Omega  \rho _k\: \xi_k   + \int_\Omega(\sigma_k\:  \nabla\eta_k+\chi_k) \cdot \nabla \xi_k  =\int_\Omega \mu _k\: \xi_k     +\langle \pi_k, \xi_k \rangle_{\Gamma_k} , \quad \forall\:    \xi_k\in  H^1_{\Gamma_{_k} } (\Omega) .$$

	 $$\int_\Omega  \rho _k\: \xi _k    + \int_\Omega \sigma_k\:  \nabla\eta_k\cdot \nabla \xi_k  =\int_\Omega \mu _k\: \xi _k   -\int \chi_k\cdot \nabla \xi_k    +\langle  \pi_k, \xi_k\rangle_{\Gamma_{N_k}}  ,\quad \forall \:  \xi\in H^1_{\Gamma_{D_1} } (\Omega) \times  H^1_{\Gamma_{D_2} } (\Omega). $$ 
	 In particular, for each $k=1,2, $ we have 
	 	\begin{equation}\label{eqextreme}
	 	\int_\Omega  \rho_k\: (\eta_k-\tilde g_k)    + \int_\Omega \sigma_k\:  \nabla\eta_k \cdot \nabla(\eta_k-\tilde g_k)   =\int_\Omega \mu_k  \: (\eta_k-\tilde g_k)     -\int \chi_k\cdot \nabla (\eta_k-\tilde g_k)   +\langle \pi_k , (\eta_k-\tilde g_k) \rangle_{\Gamma_{N }}   . 	\end{equation}	 
	 

\end{corollary}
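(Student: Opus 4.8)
The statement is a direct corollary of Theorem~\ref{tduality1} together with the existence Proposition~\ref{Pexistprimal}, so the plan is to unwind these two results and then read off \eqref{eqextreme} by a single well-chosen test function. The starting observation is that, at the level of the weak formulation and of the boundary data on $\Gamma_{N_k}$ and $\Gamma_{D_k}$, the system \eqref{PDE2} is exactly the system \eqref{PDE1} delivered by Theorem~\ref{tduality1}: the two weak identities written in the statement are algebraically the same, since moving $-\int_\Omega\chi_k\cdot\nabla\xi_k$ to the left reproduces the flux $\sigma_k\nabla\eta_k+\chi_k$. Hence I would first invoke Proposition~\ref{Pexistprimal} to obtain a minimizer $(\rho,\phi)$ of the primal problem $\N_{\pi,g}^\chi[\mu]$ and a minimizer $\eta$ of its dual $D_{\pi,g}^{\chi}[\mu]$.

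The second step is purely to quote the equivalence part of Theorem~\ref{tduality1}. That equivalence says precisely that optimality of $(\rho,\phi)$ and $\eta$ forces $\phi_k=\nabla\eta_k$ and the full list of properties claimed in the corollary: $\rho_k\in L^2_+(\Omega)$, $\eta_k\in H^1(\Omega)$ with $\eta_k=g_k$ on $\Gamma_{D_k}$, the state relations $\eta_1\vee\eta_2=:\tilde\eta\in\partial\beta(x,\rho_1+\rho_2)$ and $\eta_k-\tilde\eta\in\partial I\!\!I_{[0,\infty)}(\rho_k)$ a.e. in $\Omega$, and the weak identity against every $\xi_k\in H^1_{\Gamma_{D_k}}(\Omega)$. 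This already yields the existence assertion verbatim, so no further estimate is required at this stage.

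It then remains only to produce \eqref{eqextreme}, and the plan is to substitute the particular test function $\xi_k=\eta_k-\tilde g_k$ into the weak formulation. The single point to verify is admissibility, namely that $\eta_k-\tilde g_k\in H^1_{\Gamma_{D_k}}(\Omega)$: by \eqref{tildeg} the component $\tilde g_k$ equals $g_k$ on $\Gamma_{D_k}$, while the solution satisfies $\eta_k=g_k$ on $\Gamma_{D_k}$, so the trace of $\eta_k-\tilde g_k$ vanishes on $\Gamma_{D_k}$ and the function lies in the required space. Inserting this choice termwise into the weak identity then reproduces \eqref{eqextreme} directly.

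I do not expect a genuine obstacle here, since all the analytic content---existence of optimizers, the Fenchel duality $\N_{\pi,g}^\chi[\mu]=D_{\pi,g}^{\chi}[\mu]$, and the characterization of optimizers through the subdifferential of $\B$---is already carried by Proposition~\ref{Pexistprimal} and Theorem~\ref{tduality1}. The only matters requiring attention are bookkeeping ones: reconciling the sign convention between the strong form of \eqref{PDE2} and that of \eqref{PDE1}, and confirming that $\eta_k-\tilde g_k$ is an admissible test function. Both are routine, which is exactly why the result is phrased as a corollary.
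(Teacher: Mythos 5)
Your proposal is correct and follows exactly the route the paper intends: the paper gives no separate proof and simply declares the corollary a direct consequence of Theorem~\ref{tduality1}, which is precisely what you unpack — existence of primal/dual optimizers via Proposition~\ref{Pexistprimal}, the characterization of optimizers as solutions of the PDE system, and then the test function $\xi_k=\eta_k-\tilde g_k$ (admissible since $\eta_k=g_k=\tilde g_k$ on $\Gamma_{D_k}$) to obtain \eqref{eqextreme}. Your remark about the sign convention is also apt: the strong forms of \eqref{PDE1} and \eqref{PDE2} differ by a sign in front of $\nabla\cdot\chi_k$, but the weak formulations coincide, so nothing is affected.
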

%
%
%
%
%
%

 \begin{remark}
 \begin{enumerate}
 	\item 	For simplicity, we present in this paper the case where $F_\sigma$ is is quadratic as defined  in  \eqref{condF}. However, the framework can be extended to a more general form: $F [\Phi] = F_1(x,\phi_1)+ F_2(x,\phi_2),$ where each $F_k$ is tailored to capture the specific diffusion behavior  of     the specie $k=1,2.$  Readers interested in exploring   the expected associated dynamic  for more general   $F $  can refer to \cite{IgGF}.
 	
 	\item While assumption \eqref{minsigma} suffices for the proofs in this paper, exploring specific scenarios where either $\sigma_1\equiv 0$ or $\sigma_2\equiv 0$  holds significant interest for applications.  We believe that some of the results might still be valid under the assumptions $\min_{x\in \overline \Omega} \sigma_1(x) \: \min_{x\in \overline \Omega} \sigma_2(x) \neq 0 $, potentially requiring additional compatibility conditions.  We defer the details of this case for future work focused on special applications. 
 \end{enumerate}

 \end{remark}

\section{Existence for evolution problem}  
   \label{Sexistence}

Our objective here is to establish the existence of a weak solution. As mentioned in the introduction,  we proceed by discretizing time with discrete steps of size $\tau$, denoted as $t_0 = 0 < t_1 < \cdots < t_n <T$, and then consider the limit as $\tau$ approaches zero in the sequence of piecewise constant curves 
$$\rho^\tau =\sum_{i=0}^{n-1}\rho^i\:  \chi_{[t _k,t_{i+1}[}  +   \rho_0\:  \chi_{[-\tau,0[}   ,$$
with $\rho^0=\rho_0.$  Here $\rho^i$ is given   by 
 $$\rho^i = 	 	\hbox{argmin}_{\rho}\left \{\int_\Omega   \beta[\rho] +  \tau\: \int_\Omega F_\sigma[  \phi ] \: dx - \tau\:     [ (\sigma\:\phi +\rho^{i-1}V^i+\overline f^i)\cdot  \nu, g]_{\Gamma_D} \right. $$ $$\left.  \: :\:    (\rho,\phi)\in \A_{\pi}^{\rho^{i-1}V^i+\overline f^i}[(\tau f_0^i +\rho^{i-1}-\rho)/\tau ],\:  \rho\in [L^2_+(\Omega)]^2 \right\} $$    
 and  
 $$  f_0^i(.) :=  \left( \frac{1}{\tau} \int_{t_{i-1}}^{t_i} f_{01}(t,.)\: dt ,  \frac{1}{\tau} \int_{t_{i-1}}^{t_i} f_{02}(t,.)\: dt   \right)\hbox{ and } \overline f^i(.) :=  \left( \frac{1}{\tau} \int_{t_{i-1}}^{t_i} \overline f_{1}(t,.)\: dt ,  \frac{1}{\tau} \int_{t_{i-1}}^{t_i} \overline f_{2}(t,.)\: dt   \right),  $$
 In other words, $\rho^i$ is given by 
  $$\rho^i = 	 	\hbox{argmin}_{\rho}\left \{\int_\Omega   \beta[\rho] +  \tau\:   \I_{\pi,g}^{\rho^{i-1}V^i+\overline f^i}[(\tau f_0^i +\rho^{i-1}-\rho)/\tau ]\:  :\:   \rho\in [L^2_+(\Omega)]^2 \right\}. $$     
  Thanks to Theorem \ref{tduality1}, we know that $\rho_i$ satisfies the following system  PDE 
	\begin{equation}
	\label{PDEi}
	\left\{\begin{array}{ll} 
		\left.  \begin{array}{ll} 
			\rho _k^i-\tau\: \nabla\cdot (\sigma_k\:  \nabla \eta _k^i+\rho^{i-1}_kV_k^i  +\overline f_{k}^i  ) = \tau f_{0k}^i   + \rho_k^{i-1} ,    \\ 
			\\  	 \eta_k^i \in \partial\beta(.,\rho_1^i+\rho_2^i)+\partial I\!\!I_{[0,\infty)}(\rho_k^i) 		 
		\end{array}\right\} 	\quad & \hbox{ in } \Omega, \\  \\
		(\sigma_k\:\nabla\eta_k^i+\rho^{i-1}_kV_k^i+\overline f_{k}^i  ) \cdot \nu  =  \pi_k   & \hbox{ on }\Gamma_{N _k}\\  \\
		\eta_k^i  =  g _k ,\quad    & \hbox{ on }\Gamma_{D _k}
	\end{array}
	\right\} \quad  \hbox{ for }k=1,2, 	 
\end{equation} 
in the sense that, for each  $k=1,2,$ for each $k=1,2, $    	$\rho _k^i\in L^2_+(\Omega),$  $\eta_k^i\in H^1 (\Omega),$  $\eta_k^i=g_k$ on $\Gamma_{D _k},$  
$$ \eta_1^i\vee\eta_2^i =:\tilde\eta^i \in \partial \beta(x,   \rho_1^i+ \rho_2^i),\: \quad  \eta_k^i -\tilde \eta    \in  \partial  I\!\! I_{[0,\infty)}(\rho_k^i),   \quad \hbox{ a.e.  in }\Omega,$$ 
and  	$$\int_\Omega ( \rho _k^i-\rho _k^{i-1})\: \xi_k  \: dx+\tau\:  \int_\Omega (\sigma_k\:  \nabla\eta _k^i+ \rho^{i-1}_kV_k^i+\overline f_{k}^i  ) \cdot \nabla \xi_k  \: dx =\tau\:  \int_\Omega f^i_{0k}\: \xi _k  \: dx + \tau\: \langle \pi_k^i , \xi _k\rangle_{\Gamma_{N_{k}}} , $$	
		 for any $\xi\in H^1_{\Gamma_{D_1} } (\Omega) \times  H^1_{\Gamma_{D_2} } (\Omega).$ 	

 \medskip 
    Let us define the sequence 
 $$\tilde \rho^{\tau}(t) =\frac{(t-t_i)\rho^{i+1} -(t-t_{i+1}) \rho^{i}}{ \tau},\hbox{ a.e. in }\Omega, \quad \hbox{ for any }t\in [t_i,t_i+1),\quad i=0,.... n-1.$$ 
 In particular one sees that 
\begin{equation}\label{tilderhoEst}
	 \tilde \rho^k(t)-\rho^\tau(t)= (t-t_i)\partial_t \rho^\tau(t),\hbox{ a.e. in }\Omega, \quad \hbox{ for any }t\in [t_i,t_{i+1}),\: i=0,1,... n-1,  
\end{equation}
and for a.e. $t\in (0,T),$  the triplet $(\tilde \rho^\tau,\rho^\tau,p^\tau)$ satisfies the following PDE  (in the sense of Corollary \ref{CorEst}) 
 	\begin{equation}
 	\label{PDEtau}
 	\left\{\begin{array}{ll} 
 		\left.  \begin{array}{ll} 
 			\partial_t 	\tilde \rho_k^\tau -  \nabla\cdot (\sigma_k\:  \nabla \eta _k^ \tau+\rho^{i-1}_kV_k^\tau +\overline f_{k}^\tau ) =   f_{0k}^\tau   ,    \\ 
 			\\  			   \eta_k^\tau \in \partial\beta(.,\rho_1^\tau+\rho_2^\tau)+\partial I\!\!I_{[0,\infty)}(\rho_k^\tau)    
 		\end{array}\right\} 	\quad & \hbox{ in } \Omega, \\  \\
 		(\sigma_k\: \nabla\eta_k^\tau+\rho^{i-1}_kV_k^\tau+\overline f_{k}^\tau  ) \cdot \nu  =  \pi_k   & \hbox{ on }\Gamma_{N _k}\\  \\
 		\eta_k^\tau  =  g _k ,\quad    & \hbox{ on }\Gamma_{D _k}
 	\end{array}
 	\right\} \quad  \hbox{ for }k=1,2, 	 
 \end{equation} 
  The next step is to analyze the behavior of the approximation  triplet $(\rho^\tau,\tilde \rho^\tau,\eta^\tau)$   as the parameter $\tau\to 0.$ We aim to proof that,   selecting a subsequence if necessary, the limit of this triplet converges to a solution of the system of PDEs given by equation \eqref{PDEevol}. This objective motivates the following proposition, whose proof is deferred to the end of this section.
 
\begin{proposition}\label{Pcarcterisation}
	There exists   $\rho=(\rho_1,\rho_2)\in L^\infty\left(0,T;[L^2(\Omega)]^2\right)\cap W^{1,2}(0,T; [H^{-1}_{\Gamma_D}(\Omega)]^2)$,  $\eta=(\eta_1,\eta_2)\in L^2\left(0,T; [H^1(\Omega)]^2 \right),$  such that $\eta=g$ on $\Gamma_D,$ and there exists sub-sequences that we denote by again $(\rho^\tau_1, \rho^\tau_2)$, $(\tilde \rho^\tau_1, \tilde \rho^\tau_2)$  and $(\eta^\tau_1,\eta^\tau_1),$  such that for each $k=1,2,$ 
	\begin{equation}\label{limrhotau}
		\rho^\tau_k \to \rho_k,\quad \hbox{ in }  [L^2(Q)]^2-\hbox{weak}, 
	\end{equation} 
		\begin{equation}\label{limrhotildetau}
		\tilde \rho^\tau_k \to \rho_k,\quad \hbox{ in }  [L^2(Q)]^2-\hbox{weak}, 
	\end{equation} 
\begin{equation}\label{limrhotildetaut}
\partial_t\tilde \rho^\tau_k \to \partial_t \rho_k,\quad \hbox{ in }  L^2(0,T; [H^{-1}_{\Gamma_D}(\Omega)]^2) -\hbox{weak}, 
\end{equation} 
and 
	\begin{equation}\label{limetatau}
		\eta^\tau_k \to \eta_k,\quad \hbox{ in } L^2\left(0,T; [H^1(\Omega)]^2  \right)-\hbox{weak}.
	\end{equation} 
	Moreover,     $(\rho,\eta)$ satisfies the state equations \eqref{Stateequations}, and  $\rho_k(0)=\rho_{0k}.$ 
\end{proposition}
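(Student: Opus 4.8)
The plan is to let $\tau\to0$ in the one-step system \eqref{PDEi} (equivalently in \eqref{PDEtau}), in the classical spirit of implicit-Euler / minimizing-movement schemes: first derive $\tau$-uniform bounds, extract weak limits, pass to the limit in the linear balance law, and finally recover the nonlinear constitutive relation by monotonicity. \textbf{A priori estimates.} Using that $\eta^i\in\partial\B(x,\rho^i)$ and the subgradient inequality $\eta^i\cdot(\rho^i-\rho^{i-1})\ge\B(x,\rho^i)-\B(x,\rho^{i-1})$, together with the minimizing property defining $\rho^i$, I would sum over $i$ to obtain the discrete energy--dissipation bound $\sup_i\E(\rho^i)\le C$ and $\sum_i\tau\int_\Omega F_\sigma[\nabla\eta^i]\le C$. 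By assumption (H2) and $0\le\rho_k\le\rho_1+\rho_2$, the first bound gives $(\rho^\tau_1,\rho^\tau_2)$ bounded in $L^2(Q)$; the second, combined with the coercivity \eqref{minsigma}, the Dirichlet condition $\eta^\tau=g$ on $\Gamma_D$ and a Poincaré inequality (valid since $\mathcal H^{N-1}(\Gamma_{D_k})>0$), gives $\eta^\tau$ bounded in $L^2(0,T;[H^1(\Omega)]^2)$. Reading off \eqref{PDEtau} then bounds the fluxes in $L^2(Q)$, hence $\partial_t\tilde\rho^\tau$ in $L^2(0,T;[H^{-1}_{\Gamma_D}(\Omega)]^2)$.

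\textbf{Compactness and the linear passage.} Reflexivity yields subsequences realizing \eqref{limrhotau}--\eqref{limetatau}. That $\rho^\tau$ and $\tilde\rho^\tau$ share the same limit follows from \eqref{tilderhoEst}: since $|t-t_i|\le\tau$, one has $\tilde\rho^\tau-\rho^\tau\to0$ in $L^2(0,T;[H^{-1}_{\Gamma_D}(\Omega)]^2)$, and as both are bounded in $L^2(Q)$ the injectivity of $L^2\hookrightarrow H^{-1}_{\Gamma_D}$ forces the two weak $L^2$-limits to coincide. The initial condition $\rho_k(0)=\rho_{0k}$ comes from $\tilde\rho^\tau\in C([0,T];[H^{-1}_{\Gamma_D}(\Omega)]^2)$ with $\tilde\rho^\tau(0)=\rho_0$ and the weak convergence of $\partial_t\tilde\rho^\tau$. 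In the weak form \eqref{weakform} every term is linear in the unknowns: the diffusive flux $\sigma\nabla\eta^\tau$ and the data $\overline f^\tau,f_0^\tau,\pi$ pass by weak convergence, while the drift $\rho^{i-1}V^\tau\rightharpoonup\rho V$ because multiplication by the fixed $L^\infty$ field $V$ preserves weak $L^2$-convergence and the time shift $\tau\to0$ is harmless; hence the limit $(\rho,\eta)$ satisfies \eqref{weakform}.

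\textbf{The nonlinear relation.} By the maximal monotonicity of $\partial\B(x,\cdot)$ established earlier, the state equations \eqref{Stateequations} are equivalent to $\eta^\tau\in\partial\B(x,\rho^\tau)$ a.e., which induces a maximal monotone operator $\mathcal A$ on $L^2(Q)^2$. Since only weak convergence of both $\rho^\tau$ and $\eta^\tau$ is available --- and neither the pointwise graph nor the map $(\eta_1,\eta_2)\mapsto\eta_1\vee\eta_2$ is weakly closed --- I would invoke Minty's device: it suffices to show the energy inequality $\limsup_\tau\int_Q\eta^\tau\cdot\rho^\tau\le\int_Q\eta\cdot\rho$. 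Testing the monotonicity inequality $\int_Q(\eta^\tau-z)\cdot(\rho^\tau-w)\ge0$ against an arbitrary pair $(w,z)$ with $z\in\partial\B(\cdot,w)$ and using weak$\times$fixed convergence of the cross terms, this limsup yields $\int_Q(\eta-z)\cdot(\rho-w)\ge0$ for all such pairs, hence $\eta\in\partial\B(\rho)$, i.e. \eqref{Stateequations}.

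\textbf{Main obstacle.} The crux is precisely the energy (limsup) inequality above. The lower bound is free: the pointwise Fenchel identity $\eta^\tau\cdot\rho^\tau=\B(x,\rho^\tau)+\B^*(x,\eta^\tau)$ and the weak lower semicontinuity of the convex integrals $\int\B(x,\cdot)$ and $\int\B^*(x,\cdot)$ give $\liminf_\tau\int_Q\eta^\tau\cdot\rho^\tau\ge\int_Q\B(x,\rho)+\int_Q\B^*(x,\eta)\ge\int_Q\eta\cdot\rho$. The matching upper bound is the delicate point: I would obtain it from the discrete dissipation identity produced by testing \eqref{PDEtau} with $\eta^\tau-\tilde g$ (cf. \eqref{eqextreme}), in which the nonnegative quadratic term $\int_Q F_\sigma[\nabla\eta^\tau]$ appears, together with the convexity bound $\int_0^T\langle\partial_t\tilde\rho^\tau,\eta^\tau-\tilde g\rangle\ge\E(\rho^\tau(T))-\E(\rho_0)-\int_\Omega\tilde g\,(\rho^\tau(T)-\rho_0)$. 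Passing to the limit with the lower semicontinuity of $\E$ and of $\int F_\sigma[\nabla\,\cdot\,]$, and comparing with the same identity written for the limit equation, should sandwich the two bounds and force energy convergence. The genuinely subtle issue --- and the one I expect to cost the most work --- is that the chain rule for the limit curve presupposes $\eta\in\partial\B(\rho)$, so the argument must be organised as a simultaneous lsc-liminf / dissipation-limsup sandwich rather than a direct computation, thereby closing the loop without circularity.
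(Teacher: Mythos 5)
Your a priori estimates and the linear passage to the limit match the paper's route (Lemmas \ref{Lrhotproperty} and \ref{Propcompacttau}, the only omission being the Gr\"onwall step needed to absorb the drift term $\rho^\tau(\cdot-\tau)V^\tau$, which is routine). The divergence --- and the gap --- is in the identification of the nonlinear relation \eqref{Stateequations}.

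Your Minty plan needs $\limsup_\tau\int_Q\rho^\tau\cdot\eta^\tau\le\int_Q\rho\cdot\eta$ for the \emph{space-time $L^2(Q)$ pairing}, since the graph $\partial \B(x,\cdot)$ acts pointwise in $(t,x)$. But the dissipation identity you propose to extract it from (testing \eqref{PDEtau} with $\eta^\tau-\tilde g$ and telescoping the subgradient inequality) controls a different quantity, namely the pairing $\sum_i\int_\Omega(\rho^i-\rho^{i-1})\cdot\eta^i$ of the discrete \emph{time derivative} with $\eta^\tau$; it never produces $\int_Q\rho^\tau\cdot\eta^\tau$. And the step ``compare with the same identity written for the limit equation'' is exactly the circularity you flag: writing $\int_0^T\langle\partial_t\rho,\eta\rangle$ as $\E(\rho(T))-\E(\rho_0)+\dots$ is the chain rule along the limit curve, which presupposes $\eta\in\partial\E(\rho)$. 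Declaring that the argument ``must be organised as a simultaneous sandwich'' does not close this loop; as written, the proof of the limsup inequality --- which you yourself identify as the crux --- is missing.

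The paper sidesteps the energy limsup entirely. It invokes the compensated-compactness / nonlinear Aubin--Lions lemma of \cite{Moussa,ACM}: since $\partial_t\tilde\rho^\tau$ is bounded in $L^2(0,T;[H^{-1}_{\Gamma_D}(\Omega)]^2)$ and $\eta^\tau$ in $L^2(0,T;[H^1(\Omega)]^2)$, the products $\rho_k^\tau\,\eta_k^\tau$ and $(\rho_1^\tau+\rho_2^\tau)\,(\eta_1^\tau\vee\eta_2^\tau)$ converge in $\D'(Q)$ to the products of the weak limits. The weak Aubin-type lemma (Prop.~1.4 of \cite{ACM}) then yields $\overline\eta\in\partial\beta(\cdot,\rho_1+\rho_2)$ for the weak limit $\overline\eta$ of $\eta_1^\tau\vee\eta_2^\tau$ (which, as you rightly note, need not coincide with $\eta_1\vee\eta_2$), and the algebraic identity $\rho_1^\tau\eta_1^\tau+\rho_2^\tau\eta_2^\tau=(\rho_1^\tau+\rho_2^\tau)\,\eta_1^\tau\vee\eta_2^\tau$ passes to the limit to give $\rho_1\eta_1+\rho_2\eta_2=(\rho_1+\rho_2)\,\overline\eta$; combined with $\eta_1\vee\eta_2\le\overline\eta$ this forces $\eta_1\vee\eta_2=\overline\eta$ wherever $\rho_1+\rho_2>0$, the set $[\rho_1+\rho_2=0]$ being handled by the interval structure of $\partial\beta(x,0)$. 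If you wish to keep a Minty-flavoured argument, the product convergence supplied by this lemma is precisely the input that would make it run --- but at that point the direct identification is already done.
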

\medskip  		
Then, the proof of Theorem \ref{texistuniq} follows simply by passing to the limit in the weak formulation of \eqref{PDEtau}. 
\begin{proofth}{Proof of Theorem \ref{texistuniq}}
	Thanks to Proposition \ref{Pcarcterisation}, it remains to proof that the couple $(\rho,\eta)$ satisfies the weak formulation \eqref{weakform}. 	Remember that, for any $t\in (0,T)$ and $\xi\in [H^{1}_{\Gamma_D}(\Omega)]^2,$ we have  
	$$  \int _\Omega 	\partial_t 	\tilde \rho ^\tau_k (t)\cdot  \xi +\int_\Omega  \sigma_k \: \nabla\eta ^\tau_k  (t) \cdot \nabla \xi_k  = \langle f  ^\tau_k(t) ,\xi_k\rangle _{k,\Omega} - \int_\Omega   (\rho^\tau_k (t-\tau)V^\tau_k (t)\cdot \nabla \xi_k   + \langle \pi_k ,\xi_k \rangle _{\Gamma_{N_k}}  ,$$
	and then 
	\begin{equation}\label{eqtau1}
		\begin{array}{c} 	  \frac{d}{dt} \int _\Omega  	\tilde \rho ^\tau_k  (t) \cdot \xi_k  +\int_\Omega  (\sigma_k \:  \nabla\eta   ^\tau_k  (t) +  \rho^\tau_k (t) V^\tau_k (t)) \cdot \nabla \xi_k   = \langle f  ^\tau_k(t) ,\xi_k\rangle _{k,\Omega}   + \langle \pi_k ,\xi_k \rangle _{\Gamma_{N_k}}   \\ \\ + \underbrace{\int_\Omega  (\rho^\tau_k(t) - \rho^\tau_k(t-\tau) )V^\tau_k(t) \cdot \nabla \xi _k]}_{J_\tau(t)} ,\quad \hbox{ in }\D'([0,T))  .
		\end{array} 
	\end{equation}   
	Using the fact that  $\rho^\tau_k$ is bounded in $ L^\infty(0,T;L^2(\Omega)) $ and $V^\tau_k$ is relatively compact in $ L^2(Q)^N ,$ we see that  $J_\tau\to 0$ weakly. 
	So, passing to the limit in \eqref{eqtau1}, we deduce that   the couple $(\rho,\eta )$  satisfies \eqref{weakform}. Thus, $(\rho,\eta)$ is a weak solution of the problem \eqref{weakform}. 
	
\end{proofth}

\bigskip 
Now, let us prove Proposition \ref{Pcarcterisation}. We begin by to prove the following result.

 \begin{lemma}\label{Lrhotproperty}
 	For any $t\in [0,T) ,$ we have 
 	\begin{equation}\label{rhotproperty}
 		\begin{array}{c}
 		 	\int_\Omega ( \beta[\rho^{\tau}(t)]  -   \rho^{\tau}(t)\cdot  \tilde g  )  +   \int_0^t \!\!    \int_\Omega  F_\sigma[ \nabla(\eta^\tau -\tilde g) ]    \leq 	\int_\Omega ( \beta[\rho_0]  -   \rho_0\cdot  \tilde g  )  +  \int_0^t 	 [ f^{\tau},    \eta^{\tau}-\tilde g ]_{\Omega}  \\  \\  
 			\hspace*{3cm}	-    \int_0^t \!\! \int_\Omega  [\rho^{\tau}(t-\tau) V^{\tau}   \cdot \nabla(\eta^{\tau}-\tilde g)]  +T\:  \int_\Omega  F_\sigma[\nabla  \tilde g]      +  \int_0^t   [ \pi,\eta^{\tau}]_{\Gamma_N},\quad \hbox{ for any  } t\in [0,T)  .  
 		\end{array}
 	\end{equation}	   		 
 \end{lemma}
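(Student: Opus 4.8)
The plan is to prove the discrete energy balance first at the level of a single step of the scheme \eqref{PDEi}, and then sum over the steps and telescope. The engine of the per-step estimate is the convexity of the composite functional $\B(x,\cdot)$: by the Proposition describing $\partial\B(x,\cdot)$, the state relations in \eqref{PDEi} are precisely the statement that $\eta^i=(\eta_1^i,\eta_2^i)\in\partial\B(x,\rho^i)$ a.e. in $\Omega$, where $\rho^i=(\rho_1^i,\rho_2^i)$. Since $\rho^i,\rho^{i-1}\in[L^2_+(\Omega)]^2$, we have $\B(x,\rho^i)=\beta[\rho^i]$ and $\B(x,\rho^{i-1})=\beta[\rho^{i-1}]$ pointwise, so the subgradient inequality for $\B(x,\cdot)$ yields
\[
\beta[\rho^i]-\beta[\rho^{i-1}]\leq \eta^i\cdot(\rho^i-\rho^{i-1})=\sum_{k=1,2}\eta_k^i\,(\rho_k^i-\rho_k^{i-1})\quad\hbox{a.e. in }\Omega .
\]
This is the one place convexity is used, and it is what lets us avoid differentiating $\beta$.

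Next I would feed the right-hand side into the weak form of \eqref{PDEi}. The function $\xi_k:=\eta_k^i-\tilde g_k$ is admissible, since $\eta_k^i=g_k=\tilde g_k$ on $\Gamma_{D_k}$ forces $\xi_k\in H^1_{\Gamma_{D_k}}(\Omega)$. Testing \eqref{PDEi} with $\xi_k$, summing over $k$, and solving for $\sum_k\int_\Omega(\rho_k^i-\rho_k^{i-1})\eta_k^i$ expresses that quantity through the term $\sum_k\int_\Omega(\rho_k^i-\rho_k^{i-1})\tilde g_k$, the diffusion term $-\tau\sum_k\int_\Omega\sigma_k\nabla\eta_k^i\cdot\nabla(\eta_k^i-\tilde g_k)$, the drift/forcing terms $-\tau\sum_k\int_\Omega(\rho_k^{i-1}V_k^i+\overline f_k^i)\cdot\nabla(\eta_k^i-\tilde g_k)+\tau\sum_k\int_\Omega f_{0k}^i(\eta_k^i-\tilde g_k)$, and the boundary term $\tau\sum_k\langle\pi_k,\eta_k^i-\tilde g_k\rangle_{\Gamma_{N_k}}$. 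Combining this identity with the convexity bound gives the per-step inequality.

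The two bookkeeping points I would treat with care are the diffusion term and the forcing term. For the diffusion term I split $\nabla\eta_k^i=\nabla(\eta_k^i-\tilde g_k)+\nabla\tilde g_k$, which produces $-2\tau\int_\Omega F_\sigma[\nabla(\eta^i-\tilde g)]$ together with a cross term $-\tau\sum_k\int_\Omega\sigma_k\nabla\tilde g_k\cdot\nabla(\eta_k^i-\tilde g_k)$; applying Young's inequality to the cross term absorbs one copy of $F_\sigma[\nabla(\eta^i-\tilde g)]$ and leaves the positive remainder $\tau\int_\Omega F_\sigma[\nabla\tilde g]$, so that the net contribution is exactly $-\tau\int_\Omega F_\sigma[\nabla(\eta^i-\tilde g)]+\tau\int_\Omega F_\sigma[\nabla\tilde g]$, landing the correct coefficient on the dissipation. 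For the forcing, the pieces $\tau\sum_k\int_\Omega f_{0k}^i(\eta_k^i-\tilde g_k)-\tau\sum_k\int_\Omega\overline f_k^i\cdot\nabla(\eta_k^i-\tilde g_k)$ reassemble, via the $H^{-1}_{\Gamma_{D_k}}$ decomposition \eqref{decompositionH}, into $\tau\,[f^i,\eta^i-\tilde g]_\Omega$; and since $\tilde g=0$ on $\Gamma_N$ by \eqref{tildeg}, the boundary term reduces to $\tau\,[\pi,\eta^i]_{\Gamma_N}$.

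It then remains to sum the per-step inequality over $i=1,\dots,m$ (with $t\in[t_{m-1},t_m)$) and identify the sums. The left side telescopes to $\int_\Omega\beta[\rho^\tau(t)]-\int_\Omega\beta[\rho_0]$, the term $\sum_i\sum_k\int_\Omega(\rho_k^i-\rho_k^{i-1})\tilde g_k$ telescopes to $\int_\Omega\rho^\tau(t)\cdot\tilde g-\int_\Omega\rho_0\cdot\tilde g$, and each $\tau\sum_i(\cdots^i)$ is the time integral $\int_0^t(\cdots^\tau)$ of the corresponding piecewise-constant interpolant, where $\rho^{i-1}$ paired with $V^i$ becomes the shift $\rho^\tau(\cdot-\tau)V^\tau$. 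The only place where an inequality rather than an identity is retained is $\sum_i\tau\int_\Omega F_\sigma[\nabla\tilde g]\leq T\int_\Omega F_\sigma[\nabla\tilde g]$, valid since $F_\sigma[\nabla\tilde g]\geq0$ and $m\tau\leq T$. Moving the $\rho^\tau\cdot\tilde g$ contribution to the left then gives \eqref{rhotproperty}. The main obstacle is not conceptual but organizational: keeping the time indices of $V^i$, $f^i$ and the shift $\rho^{i-1}$ consistent with the interpolation conventions so that the discrete sums coincide with the stated continuous integrals, together with the Young step that fixes the coefficient of the dissipation term.
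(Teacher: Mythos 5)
Your proposal is correct and follows essentially the same route as the paper: the subgradient inequality $\beta[\rho^i]-\beta[\rho^{i-1}]\leq \eta^i\cdot(\rho^i-\rho^{i-1})$ combined with the discrete weak formulation tested against $\eta^i-\tilde g$ (the paper's \eqref{eqextremei}), Young's inequality on the cross term $\int_\Omega[\sigma\,\nabla\tilde g\cdot\nabla(\eta^i-\tilde g)]$ to isolate $\tau\int_\Omega F_\sigma[\nabla(\eta^i-\tilde g)]$, and a telescoping sum over the time steps. Your phrasing of the convexity step via $\eta^i\in\partial\B(x,\rho^i)$ is, if anything, slightly more complete than the paper's appeal to $\eta_1^i\vee\eta_2^i\in\partial\beta(\cdot,\rho_1^i+\rho_2^i)$ alone, since it supplies the sign conditions needed to pass from the total density to the componentwise pairing.
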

 \begin{proof} 
Thanks to \eqref{eqextreme}, we know that 
 \begin{equation}   \label{eqextremei}
 	\begin{array}{c}
 		\int_\Omega  \rho^{i} \cdot (\eta^{i}-\tilde g)+  \tau\: \int_\Omega  [\sigma\: \nabla\eta^{i}\cdot \nabla (\eta^{i} -\tilde g)]  
 		=  	\int_\Omega  \rho^{i-1} \cdot (\eta^{i}-\tilde g) +	\tau \: [  f^{i},\eta^{i}-\tilde g]_{\Omega} \\  \\  -\tau  \int_\Omega [\rho^{i-1}\: V^{i}\cdot \nabla(\eta^{i}-\tilde g)]     + \tau [ \pi,\eta^{i}]_{\Gamma_N}     .    
 	\end{array} 
 \end{equation}   	Remember that $\ \eta_1^{i} \vee \eta_2^{i}  \in \partial \beta(.,\rho_1^{i}+\rho_2^{i}).$  This implies that   $(\rho^{i}-\rho^{i-1})\cdot \eta^{i}\geq \beta[\rho^{i}]-  \beta[\rho^{i-1}],$ and then 
 	\begin{equation}\label{eqextreme2}
 		\begin{array}{c}
 			\int_\Omega  ( \beta[\rho^{i}] - 	  \rho^{i} \cdot \tilde g ) -  	\int_\Omega  ( \beta[\rho^{i-1}]- 	  \rho^{i-1} \cdot  \tilde g ) + \tau\:   \int_\Omega  F_\sigma [\nabla(\eta^i-\tilde g) ]   \leq     \tau \: [  f^{i},\eta^{i}-\tilde g]_{\Omega} \\  \\  - \tau \:  \int_\Omega [\rho^{i-1}\: V^{i}\cdot \nabla(\eta^{i}-\tilde g)]    + \tau  \int_\Omega  F_\sigma[\sigma\:\nabla  \tilde g]     + \tau \:  [ \pi,\eta^{i}]_{\Gamma_N}   .	\end{array} 
 	\end{equation}	
   	  Adding this inequality for $i=1...l\: \leq n,$  we obtain    
   	 	\begin{equation} 
   	  	\begin{array}{c}
   	  		\int_\Omega ( \beta[\rho^{\tau}(t_l)]  -   \rho^{\tau}(t_l)\cdot  \tilde g  )  +    \sum_{i=1}^{l}   \int_{t_{i-1}}^{t_i}  \!\!    \int_\Omega  F_\sigma[\nabla(\eta^\tau -\tilde g) ]    \leq 	\int_\Omega ( \beta[\rho_0]  -   \rho_0\cdot  \tilde g  )  +  \sum_{i=1}^{l}   \int_{t_{i-1}}^{t_i}    [ f^{\tau},    \eta^{\tau}-\tilde g ]_{\Omega}  \\  \\  
   	  	 	-    \sum_{i=1}^{l}   \int_{t_{i-1}}^{t_i}   \!\! \int_\Omega  [\rho^{\tau}(t-\tau) V^{\tau}   \cdot \nabla(\eta^{\tau}-\tilde g)]  + T\:  \int_\Omega  F_\sigma[\sigma\:\nabla  \tilde g]     +  \sum_{i=1}^{l}   \int_{t_{i-1}}^{t_i}    [ \pi,\eta^{\tau}]_{\Gamma_N},\quad \hbox{ in }\D'([0,T) , 
   	  	\end{array}
   	  \end{equation}
   	  which implies
   	  	\begin{equation} 
   	  	\begin{array}{c}
   	  		\int_\Omega ( \beta[\rho^{\tau}(t_l)]  -   \rho^{\tau}(t_l)\cdot  \tilde g  )  +   \int_{0}^{t_l}  \!\!    \int_\Omega  F_\sigma[\nabla(\eta^\tau -\tilde g) ]    \leq 	\int_\Omega ( \beta[\rho_0]  -   \rho_0\cdot  \tilde g  )  +    \int_{0}^{t_l}     [ f^{\tau},    \eta^{\tau}-\tilde g ]_{\Omega}  \\  \\  
   	  	 	-   \int_{0}^{t_l}    \!\! \int_\Omega  [\rho^{\tau}(t-\tau) V^{\tau}   \cdot \nabla(\eta^{\tau}-\tilde g)]  + T\:  \int_\Omega  F_\sigma[\nabla  \tilde g]     +   \int_{0}^{t_l}     [ \pi,\eta^{\tau}]_{\Gamma_N},\quad \hbox{ in }\D'([0,T) .
   	  	\end{array}
   	  \end{equation} 
   	Since $l$ is arbitrary, we deduce     the result of the lemma. 
 
 \end{proof}
 
 \begin{lemma}\label{Propcompacttau}
  There exists a constant $C=C(p,N,\Omega)$ such that :  
 		\begin{equation}\label{Estpple}
 			\begin{array}{c}
 				\int_\Omega  (  \rho_1^{\tau}(t)+  \rho_2^{\tau}(t)  -M)^{+2}  +\int_0^T\!\! \int_\Omega   F_\sigma[\nabla(\eta^{\tau}-\tilde g)]  \\  \\  \leq   	\frac{B}{C}\left\{  1+      \frac{  T   \Vert V^{\tau}\Vert_\infty^2}{C} \:  \exp\left( \frac{T  \Vert V^{\tau}\Vert_\infty^2}{C}\right)  \right\} ,
 			\end{array}  
 		\end{equation}	 
 		where $B$ is given by 
 		$$	 
 		B=  \Vert \tilde g\Vert_{[L^2(\Omega)]^2}^2+   \int_\Omega  F_\sigma[\nabla  \tilde g]    +  \int_\Omega ( \beta[\rho_0]  -   \rho_0\cdot  \tilde g  )   	     
 		+    T\: \int_\Omega  (\rho_{01}+\rho_{02}-M)^{+2}  $$
 		$$  +    \sup_{0<\tau<1}   \Vert V^{\tau} \Vert_\infty^2  +  \int_0^T\!\!   \Vert \pi\Vert_{[H^{-1/2}_{\Gamma_{N}}]^2}^2    +  \sup_{0<\tau<1}   \int_0^T\!\!  \left(   \Vert   f_{0}^\tau \Vert_{[L^2(\Omega)]^2}^2  + \Vert \overline f^\tau  \Vert_{[L^2(\Omega)]^2}^2 \right)  $$
 \end{lemma}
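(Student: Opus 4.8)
The plan is to turn the energy inequality of Lemma~\ref{Lrhotproperty} into the closed a priori bound \eqref{Estpple} by estimating each term on the right-hand side of \eqref{rhotproperty}, extracting the two coercive quantities, and then closing with Grönwall's inequality.

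First I would extract coercivity from both sides of \eqref{rhotproperty}. On the left, assumption $(H2)$ together with $\rho^\tau\geq 0$ gives $C\,(\rho_1^\tau+\rho_2^\tau-M)^{+2}\leq \beta[\rho^\tau]$, while \eqref{minsigma} yields $F_\sigma[\nabla(\eta^\tau-\tilde g)]\geq \tfrac12\sigma_{\min}\,|\nabla(\eta^\tau-\tilde g)|^2$ with $\sigma_{\min}=\min_{\overline\Omega}\min(\sigma_1,\sigma_2)>0$. The linear term $-\int_\Omega\rho^\tau(t)\cdot\tilde g$ on the left is split according to whether $\rho_1^\tau+\rho_2^\tau\lessgtr M$ and estimated by Young's inequality, so that a small multiple of $(\rho_1^\tau+\rho_2^\tau-M)^{+2}$ is absorbed and the remainder contributes the $\Vert\tilde g\Vert^2$ and lower-order terms of $B$.

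Next I would bound the four terms on the right of \eqref{rhotproperty}. Since $\eta^\tau-\tilde g\in H^1_{\Gamma_D}(\Omega)$ (as $\eta^\tau=g=\tilde g$ on $\Gamma_D$ and $\mathcal H^{N-1}(\Gamma_{D_k})>0$), Poincaré's inequality controls $\Vert\eta^\tau-\tilde g\Vert_{L^2}$, and through the trace theorem the boundary norm on $\Gamma_N$, both by $\Vert\nabla(\eta^\tau-\tilde g)\Vert_{L^2}$. Writing $[f^\tau,\eta^\tau-\tilde g]_\Omega=\int_\Omega f_0^\tau(\eta^\tau-\tilde g)-\int_\Omega\overline f^\tau\cdot\nabla(\eta^\tau-\tilde g)$ and applying Young's inequality splits the $f$-term into a piece absorbed by the coercive gradient term and the data contribution $\int_0^T(\Vert f_0^\tau\Vert^2+\Vert\overline f^\tau\Vert^2)$; the boundary term $[\pi,\eta^\tau]_{\Gamma_N}$ is handled likewise, producing $\int_0^T\Vert\pi\Vert^2$, and $T\int_\Omega F_\sigma[\nabla\tilde g]$ is already data. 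The decisive term is the convection integral $-\int_0^t\!\int_\Omega\rho^\tau(\cdot-\tau)\,V^\tau\cdot\nabla(\eta^\tau-\tilde g)$, which I would bound by $\Vert V^\tau\Vert_\infty\,\Vert\rho^\tau(\cdot-\tau)\Vert_{L^2}\,\Vert\nabla(\eta^\tau-\tilde g)\Vert_{L^2}$ and split by Young's inequality, absorbing a small multiple of $\Vert\nabla(\eta^\tau-\tilde g)\Vert^2$ against $C_\varepsilon\,\Vert V^\tau\Vert_\infty^2\,\Vert\rho^\tau(\cdot-\tau)\Vert_{L^2}^2$.

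The remaining work is to close the loop. Using $\Vert\rho^\tau\Vert_{L^2}^2\leq\int_\Omega(\rho_1^\tau+\rho_2^\tau)^2\leq 2\int_\Omega(\rho_1^\tau+\rho_2^\tau-M)^{+2}+2M^2|\Omega|$, the convection term is controlled by the same coercive quantity $\Phi(s):=\int_\Omega(\rho_1^\tau(s)+\rho_2^\tau(s)-M)^{+2}$ that appears on the left, but evaluated at the shifted time $s-\tau$. Substituting $u=s-\tau$ and using $\rho^\tau\equiv\rho_0$ on $[-\tau,0)$ turns $\int_0^t\Phi(s-\tau)\,ds$ into $\int_0^t\Phi(u)\,du$ plus the initial-slab term $\tau\,\Phi(0)\leq T\int_\Omega(\rho_{01}+\rho_{02}-M)^{+2}$, which is absorbed into $B$. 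I thus arrive at an inequality of the form $\Phi(t)+\int_0^t(\text{coercive})\leq B'+(C_\varepsilon/C)\,\Vert V^\tau\Vert_\infty^2\int_0^t\Phi(u)\,du$; letting $t\uparrow T$ for the coercive integral and applying Grönwall's inequality, together with the elementary bound $e^x\leq 1+x\,e^x$, yields precisely the claimed form with the constant $C$ traced back to $(H2)$ and $\sigma_{\min}$. I expect the main obstacle to be exactly this convection term and the Grönwall closure: it is the only place where $\rho^\tau$ and $\nabla\eta^\tau$ are genuinely coupled, so one must simultaneously split it so the gradient part is absorbed by coercivity while the density part reproduces $\Phi$, and handle the backward shift $s\mapsto s-\tau$ so that the discrete memory does not obstruct Grönwall. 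All the other estimates are routine applications of Young's inequality, Poincaré's inequality, and the trace theorem.
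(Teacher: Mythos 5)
Your proposal is correct and follows essentially the same route as the paper: starting from Lemma~\ref{Lrhotproperty}, Young-splitting the convection, source and boundary terms so that the gradient pieces are absorbed by the coercivity of $F_\sigma$, converting $(\rho_1^\tau+\rho_2^\tau)^2$ into $(\rho_1^\tau+\rho_2^\tau-M)^{+2}$ plus constants via $(H2)$, handling the backward time shift by the change of variables producing the $\tau\int_\Omega(\rho_{01}+\rho_{02}-M)^{+2}$ slab term, and closing with Grönwall. The only cosmetic difference is that you invoke Poincar\'e and trace inequalities explicitly where the paper leaves those routine bounds implicit.
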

 \begin{proof} 
 	Thanks to  Lemma \ref{Lrhotproperty},   for any  $t\in [0,T) ,$  we have 
 	\begin{equation} \label{esttau}
 		\begin{array}{c}
 			\int_\Omega ( \beta[\rho^{\tau}(t)]  -   \rho^{\tau}(t)\cdot  \tilde g  )+ \int_0^t\!\! \int_\Omega    F_\sigma[\nabla(\eta^{\tau} -\tilde g)]\leq 	  \int_\Omega ( \beta[\rho_0]  -   \rho_0\cdot  \tilde g  )  +   T\:  \int_\Omega  F_\sigma[\sigma\:\nabla  \tilde g]  	  \\  \\  +  \underbrace{\int_0^t\!\! \int_\Omega\vert   [\rho^{\tau}(.-\tau) V^{\tau}   \cdot \nabla(\eta^{\tau}-\tilde g)]  \vert }_{J_1^\tau }    +    \underbrace{ \int_0^t  \vert [ \pi,\eta^{\tau}]_{\Gamma_N}	  +  [ f^{\tau},    \eta^{\tau}-\tilde g ]_{\Omega}  \vert  }_{J_2^\tau } .
 		\end{array}
 	\end{equation}	   
 Using Young inequality with $\epsilon$ and $C_\epsilon,$ we see that 
 	\begin{eqnarray*}
J_1 ^\tau &\leq&   \int_0^t\!\! \int_\Omega  \vert   \rho^{\tau}_1(.-\tau) V^{\tau}_1   \cdot \nabla(\eta^{\tau}_1-\tilde g_1)   +    \rho^{\tau}_2(.-\tau) V^{\tau}_2  \cdot \nabla(\eta^{\tau}_2-\tilde g_2) \vert   \\  \\   & \leq & 	  \int_0^t\!\! \int_\Omega    (\rho^{\tau}_1(.-\tau)  +    \rho^{\tau}_2(.-\tau) )\: \left( \vert V^{\tau}_1\vert  \vee  \vert V^{\tau}_2  \vert \right) \:  ( \vert \nabla(\eta^{\tau}_1-\tilde g_1) \vert \vee 
 		\vert \nabla(\eta^{\tau}_2-\tilde g_2) \vert )   \\  \\    & \leq &  C_\epsilon  \: \frac{\Vert  V^{\tau} \Vert_\infty^2}{2 } 		  \int_0^t\!\!   \int_\Omega  (\rho_1(.-\tau)+\rho_2(.-\tau))^2 +   \epsilon	   \int_0^t\!\!   \int_\Omega   F_\sigma  [\nabla(\eta^{\tau}-\tilde g)]  .
 		 	\end{eqnarray*}
 		 	Moreover, 	since $\frac{1}{2}\vert a\vert^2 \leq  (\vert a\vert-M)^{+2}+ M^2,$  we get 
 		 	 	\begin{eqnarray*}  J_1^\tau  
 		&\leq&    C_\epsilon  \Vert V^{\tau} \Vert_\infty^2 		  \int_0^t\!\!   \int_\Omega  (\rho_1(.-\tau)+\rho_2(.-\tau)-M)^{+2} +    \epsilon	   \int_0^t\!\!   \int_\Omega   F_\sigma  [\nabla(\eta^{\tau}-\tilde g)]     \\  \\ & &   + C_\epsilon  M^2 \: T \:  \Vert V^{\tau} \Vert_\infty^2\:   \vert \Omega\vert\\  \\
 	&	\leq &    C_\epsilon  \:  \Vert V^{\tau} \Vert_\infty^2 		  \int_{-\tau}^{t-\tau }\!\!   \int_\Omega  (\rho_1+\rho_2-M)^{+2} +   \epsilon	   \int_0^t\!\!   \int_\Omega   F_\sigma  [\nabla(\eta^{\tau}-\tilde g)]        + C_\epsilon \: M^2 \: T \:  \Vert V^{\tau} \Vert_\infty^2  \vert \Omega\vert\\  \\
 		&\leq &   C_\epsilon  \:  \Vert V^{\tau} \Vert_\infty^2 		  \int_{0}^{t }\!\!   \int_\Omega  (\rho_1+\rho_2-M)^{+2} +   \epsilon	   \int_0^t\!\!   \int_\Omega   F_\sigma  [\nabla(\eta^{\tau}-\tilde g)]  + C_\epsilon \: M^2 \: T \:  \Vert V^{\tau} \Vert_\infty^2  \vert \Omega\vert    \\ \\ &  & +  \tau\: \int_\Omega  (\rho_{01}+\rho_{02}-M)^{+2}   .
 	\end{eqnarray*}
 		It is not difficult to see also that, for any $\epsilon >0,$ we can find $C_\epsilon >0,$ such that 
 	$$J_2^\tau   \leq C_\epsilon\:  \int_0^T\!\!  \left(   \Vert \pi\Vert_{[H^{-1/2}_{\Gamma_{N}}]^2}^2   +\Vert   f_{0}^\tau  \Vert_{[L^2(\Omega)]^2}^2  + \Vert \overline f ^\tau \Vert_{[L^2(\Omega)]^2}^2 \right)  +    \epsilon	   \int_0^t\!\!   \int_\Omega   + C_\epsilon \: M^2 \: T \:  \Vert V^{\tau} \Vert_\infty^2  \vert \Omega\vert    [\nabla(\eta^{\tau}-\tilde g)]  . $$
 Using moreover, the fact that    $$  \beta[\rho^{\tau}]  -   \rho^{\tau}\cdot  \tilde g   \geq \frac{C}{2} (\rho^{\tau}_1+\rho^{\tau}_2 -M)^{+2} -\frac{1}{2C} \vert \tilde g_1\vee \tilde g_2\vert^2 \geq  \frac{C}{2} (\rho^{\tau}_1+ \rho^{\tau}_2 -M)^{+2} -\frac{1}{2C} \vert \tilde g \vert^2,$$ 
 \eqref{esttau}  implies that  
 	\begin{equation}  
 		\begin{array}{c}
 			\frac{C}{2} \int_\Omega  (\rho^{\tau}_1(t)+\rho^{\tau}_2(t) -M)^{+2} + (1-2\epsilon) \int_0^t\!\! \int_\Omega    F_\sigma[\nabla(\eta^{\tau} -\tilde g)]\leq 	 \frac{1}{2C} \Vert \tilde g\Vert_{[L^2(\Omega)]^2}^2  
 			+  T \int_\Omega  F_\sigma[\nabla  \tilde g]  \\  \\ 
 			+	 \int_\Omega ( \beta[\rho_0]  -   \rho_0\cdot  \tilde g  )       +   \tau\: \int_\Omega  (\rho_{01}+\rho_{02}-M)^{+2}    +  C_\epsilon  \:  \Vert V^{\tau} \Vert_\infty^2 		  \int_{0}^{t }\!\!   \int_\Omega  (\rho_1+\rho_2-M)^{+2}  \\  \\ 
 			+  \int_0^T\!\!  \left(   \Vert \pi\Vert_{[H^{-1/2}_{\Gamma_{N}}]^2}^2   +\Vert   f_{0}^\tau  \Vert_{[L^2(\Omega)]^2}^2  + \Vert \overline f ^\tau \Vert_{[L^2(\Omega)]^2}^2 \right) 
 			+ C_\epsilon \: M^2 \: T \:  \Vert V^{\tau} \Vert_\infty^2  \vert \Omega\vert   ,\quad \hbox{ for any }t\in [0,T)  .  
 		\end{array}
 	\end{equation}	 
 	Working with a fixed small $\epsilon,$  we can find  a constant   $C=C(p,N,T,\Omega )>0,$ such that   
 	\begin{equation}\label{Esttau1}
 		\begin{array}{c}
 		 \int_\Omega  (\rho^{\tau}_1(t)+ \rho^{\tau}_2(t) -M)^{+2}  +\int_0^t\!\! \int_\Omega   F_\sigma[\nabla\eta^{\tau}-\tilde g ]  \\  \\  \leq   	\frac{B}{C}+     \frac{  \Vert V^{\tau} \Vert_\infty^2 }{C} \: 	 \int_0^t\!\!   \int_\Omega   (\rho^{\tau}_1(t)+ \rho^{\tau}_2(t) -M)^{+2}     .
 		\end{array}  
 	\end{equation}	 	    
 	This implies that  
 	\begin{equation}
 	 \int_\Omega  (\rho^{\tau}_1(t)+ \rho^{\tau}_2(t) -M)^{+2}    \leq  \frac{B}{C} \: \exp\left( T  \Vert V^{\tau}\Vert_\infty^2/C\right) ,
 	\end{equation}
  and   \eqref{Estpple} follows by \eqref{Esttau1}.  
 	 \end{proof}

 \begin{proofth}{Proof of Proposition \ref{Pcarcterisation}} Thanks to \eqref{Estpple} it is clear that  $\rho^\tau$ and $\tilde \rho^\tau$ are    bounded in $L^\infty\left(0,T; [L^2(\Omega)]^2 \right),$     and $\eta^\tau$   is bounded in  $L^2\left(0,T; [H^1(\Omega)]^2  \right).$ This implies  \eqref{limrhotau} and   \eqref{limetatau}.    On the other hand,  since 
 	$$	\partial_t 	\tilde \rho_k ^\tau =     f_{k0} ^\tau +\nabla \cdot (\rho_k^\tau(.-\tau)V_k^{\tau} + \sigma_k\:  \nabla\eta_k ^\tau+   \overline f_k^\tau)  \quad \hbox{ in }\Omega, $$
 	with   $  ( \sigma_k\: \nabla\eta_k^\tau+\rho^\tau(.-\tau)V_k^{\tau}  +\overline f_k^\tau)  \cdot \nu  =  \pi_k $  on  $\Sigma_{N_k  },$ and by   \eqref{Estpple},  $ \sigma_k\:  \nabla\eta_k    ^\tau+\rho_k^\tau(.-\tau)V_k^{\tau}  +\overline f_k^\tau $ is bounded in $L^2(Q),$ we see that  $	\partial_t 	\tilde \rho_k^\tau $ is bounded in $L^2\left(0,T;H^{-1}_{\Gamma_{D_k}}(\Omega) \right).$    	Combining this with \eqref{tilderhoEst} we deduce that $\tilde \rho^\tau$ and $\rho^\tau$ have the same $[L^2(Q)]-$weak limit. Thus   \eqref{limrhotildetau} and   \eqref{limrhotildetaut}.    Let us prove now that the couple $(\rho,\eta)$ satisfies the state equations \eqref{Stateequations}.  Thanks again to \eqref{Estpple},  we see that  $\eta_1^\tau\vee \eta_2^\tau$ is bounded in $L^2\left(0,T; H^1(\Omega)\right),$ and      there exists  $\overline \eta \in L^2\left(0,T; [H^1 (\Omega)]^2 \right) $ such that
 	 \begin{equation}\label{ineqeta}
 		0\leq {\eta_1}\vee \eta_2  \leq \overline \eta ,\quad \hbox{ a.e. in }Q, 
 	\end{equation}  
 	and,   by taking a sub-sequence if necessary,  
 	\begin{equation}
 		\eta_1^\tau\vee \eta_2^\tau \to \overline \eta ,\quad \hbox{ in } L^2\left(0,T;  H^{1} (\Omega) \right)-\hbox{weak},
 	\end{equation}  
Now,  using  weak compensated compactness results (cf. \cite{Moussa,ACM}),   we get 
 	\begin{equation} 
 		\int_0^T\!\!\int_\Omega  \tilde  \rho_k^\tau \: \eta_k^\tau \: \varphi \quad \xrightarrow[\tau \to 0 ]{}  	\quad \int_0^T\!\!\int_\Omega \rho_k\: \eta_k \: \varphi,  \hbox{ and }k=1,2, 
 	\end{equation}  
and  also   
  		\begin{equation} 
  		\int_0^T\!\!\int_\Omega    (\tilde \rho_1^\tau +\tilde \rho_2^\tau) \: 	\eta_1^\tau\vee \eta_2^\tau  \: \varphi \quad \xrightarrow[\tau \to 0 ]{}  	\quad 	\int_0^T\!\!\int_\Omega (\rho_1+\rho_2)\: \overline  \eta    \: \varphi ,
  	\end{equation} 
  	 for any $\varphi\in \D(Q).$   	Combining this with  \eqref{tilderhoEst} and the fact that $\partial_t\tilde \rho^\tau$ and $\eta^\tau$ are   bounded in 
  	 $L^2\left(0,T;H^{-1}_{\Gamma_{D_k}}(\Omega) \right)$     and 
  	$L^2\left(0,T; H^1(\Omega)\right),$  respectively, we deduce that 
  	 	\begin{equation}\label{convetak}
  	 	\int_0^T\!\!\int_\Omega     \rho_k^\tau \: \eta_k^\tau \: \varphi \quad \xrightarrow[\tau \to 0 ]{}  	\quad \int_0^T\!\!\int_\Omega \rho_k\: \eta_k \: \varphi,  \hbox{ and }k=1,2, 
  	 \end{equation}  
  	 and      
  	 \begin{equation}\label{convetak2}
  	 	\int_0^T\!\!\int_\Omega    (  \rho_1^\tau +  \rho_2^\tau) \: 	\eta_1^\tau\vee \eta_2^\tau  \: \varphi \quad \xrightarrow[\tau \to 0 ]{}  	\quad 	\int_0^T\!\!\int_\Omega (\rho_1+\rho_2)\: \overline  \eta    \: \varphi ,
  	 \end{equation} 
  	 for any $\varphi\in \D(Q).$    
     Moreover, since  $\eta_1^\tau\vee \eta_2^\tau   \in \partial \beta( .,   \rho_1^\tau +\rho_2^\tau), $ a.e. in $Q,$  by using  Weak Aubin's type Lemma  (cf.  Proposition 1.4 in \cite{ACM}), we see that  $$\overline \eta  \in    \partial \beta( .,  \rho_1 + \rho_2),\quad \hbox{ a.e. in }Q.$$   To finish the proof, let us  justify that ${\eta_1}\vee \eta_2       \in    \partial \beta( .,   \rho_1 + \rho_2),$ a.e. in $Q.$  First, one sees that since   $ (\rho_1^\tau +\rho_2^\tau)\:  \eta_1^\tau\vee \eta_2^\tau=\rho_1^\tau \: \eta_1^\tau+ \rho_2^\tau \: \eta_2^\tau,$   \eqref{convetak} and \eqref{convetak2} imply 
     \begin{equation}\label{carp}
     	\rho_1 \: \eta_1+ \rho_2 \: \eta_2 = (\rho_1 +\rho_2)  \:\overline  \eta   ,\quad \hbox{ a.e. in }Q, 
     \end{equation} 
     and, using \eqref{ineqeta},  we deduce that 
     \begin{equation}\label{carp1}
     	\rho_1 \: \eta_1+ \rho_2 \: \eta_2 = (\rho_1 +\rho_2)  \:   {\eta_1}\vee \eta_2   ,\quad \hbox{ a.e. in }Q. 
     \end{equation}  
     So, if $\rho_1+\rho_2 \neq 0,$ \eqref{carp} and  \eqref{carp1}  imply   clearly that  $ {\eta_1}\vee \eta_2   =\overline \eta \in  \partial \beta( .,  \rho_1 +\rho_2),$ a.e. in $Q.$   And, if     $\rho_1+\rho_2 = 0,$ the result follows from the fact that   $ \partial \beta(.,0)\ni 0\leq {\eta_1}\vee \eta_2  \leq  \overline \eta\in  \partial \beta(.,0) .$

 \end{proofth}

  \begin{remark}
 Thanks to compensated compactness results of  Proposition 1.4 in \cite{ACM},   if  $\beta^{-1}(x,.)$      is single valued then we have $\rho_1^\tau + \rho_2^\tau \to \rho_1+\rho_2$   in $L^2(Q).$   However, it is not clear if this remains to be true for each density $\rho_k.$ 
  	\end{remark}

\section{Connection with $H^{-1}$-like theory in the homogeneous case}
    \label{SH-1}

 The aim of this section is to prove Theorem \ref{ThH-1}. So,  we assume that 
$$\pi=0  \hbox{ on }\Gamma_N   \quad  \hbox{ and } \quad  g=0   \hbox{ on }\Gamma_D .$$
   Let us consider the functional $\E$ as defined in Section \ref{Smain}.   Then, we consider  the Hilbert space  
$[H^{-1}_{\Gamma_{D}}(\Omega)]^2,$ endowed with the norm 
$$\Vert f\Vert_{H^{-1}_{\Gamma_{D}}(\Omega)}  =\left( \Vert f_1\Vert _{H^{-1}_{\Gamma_{D_1}}}^2  +\Vert f_2\Vert _{H^{-1}_{\Gamma_{D_2}}} ^2   \right)^{1/2}, \quad \hbox{ for any }f=(f_1,f_2)\in  [H^{-1}_{\Gamma_{D}}(\Omega)]^2, $$
and its  associate  inner product  
$$[ f,g]_{\Omega} =  \langle f_1,g_1\rangle_{1,\Omega}  + \langle f_2,g_2\rangle_{2,\Omega} ,\quad \hbox{ for any }f,g\in   [H^{-1}_{\Gamma_{D}}(\Omega)]^2.$$

\bigskip 
 Our   main result  in this section concerns  the   characterization of $\partial E$ in terms of cross-diffusion system like in \eqref{PDE1}.  As we'll see this operator is closely connected to the stationary problem 
 \begin{equation}  \label{PDEstat}                                                                                                                 
 	\left\{\begin{array}{ll}                                                                                                
 		\left.  \begin{array}{ll}                                                                                                      
 			- \nabla \cdot(\sigma_k\:  \nabla \eta_k+ \overline f_k) = f_{0 k}   ,  \quad  \hbox{ for }k=1,2 \\               
 			\\  			    \eta_k \in \partial\beta(.,\rho_1+\rho_2)+\partial I\!\!I_{[0,\infty)}(\rho_k)  \\    	 
 		\end{array}\right\} 	\quad & \hbox{ in } \Omega, \\  \\                                                                        
 		(\sigma_k\:\nabla\eta_k+\overline f_k)  \cdot \nu  = 0 & \hbox{ on }\Gamma_{N _k}\\  \\                                        
 		\eta_k  =  0 \quad    & \hbox{ on }\Gamma_{D_k},                                                                            
 	\end{array}  	\right. 	\quad k=1,2,                                                                                                                       
 \end{equation}           
for a given $f=(f_1,f_2) \in H^{-1}_{\Gamma_{D_1}}(\Omega)\times  H^{-1}_{\Gamma_{D_2}}(\Omega) ,$  and for each $k=1,2,$ $(f_{0k},\overline f_k)\in L^2(\Omega)\times L^2(\Omega)^N$ is the couple associated with $f_k\in H^{-1}_{\Gamma_{D_k}}(\Omega),$ $f_k=f_{0k}+\nabla \cdot \overline f_k.$

  \begin{proposition}\label{PH-1}
  	For any $(\rho,f)\in [H^{-1}_{\Gamma_{D}}(\Omega)]^2 \times [H^{-1}_{\Gamma_{D}}(\Omega)]^2,$  $f\in \partial E(\rho)$ if and only if  $\rho:=(\rho_1,\rho_2)\in [L^2_+(\Omega)]^2, $  and for each $k=1,2,$ there exists  $\eta_k\in H^1 _{\Gamma_{D_k}}(\Omega),$  such that 
  		$$ \eta_1\vee\eta_2 =:\tilde\eta \in \partial \beta(x,   \rho_1+ \rho_2),\: \quad  \eta_k -\tilde \eta    \in  \partial  I\!\! I_{[0,\infty)}(\rho_k),   \quad \hbox{ a.e.  in }\Omega,$$ 
  		and 
  		$$  \int_\Omega\sigma_k\:\nabla\eta_k \cdot \nabla \xi_k  =\langle f_k ,\xi_k\rangle_{k,\Omega}     -\langle \pi_k, \xi_k \rangle_{\Gamma_k} ,\quad  \hbox{ for any }\xi_k\in  H^1_{\Gamma_{_k} } (\Omega).$$

  	\end{proposition}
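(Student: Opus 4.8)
The plan is to compute $\partial\E$ directly from its definition in the Hilbert space $\left([H^{-1}_{\Gamma_D}(\Omega)]^2,[\cdot,\cdot]_\Omega\right)$ and to reduce the resulting variational inequality, which a priori involves the $H^{-1}$ inner product, to a pointwise condition on the graph $\partial\B(x,\cdot)$. Recall that $f\in\partial\E(\rho)$ means $\rho\in\mathrm{dom}(\E)$ and $\E(\tilde\rho)-\E(\rho)\geq[f,\tilde\rho-\rho]_\Omega$ for every $\tilde\rho\in[H^{-1}_{\Gamma_D}(\Omega)]^2$. Since, by \eqref{enrgy}, $\E(\tilde\rho)=+\infty$ unless $\tilde\rho\in[L^2_+(\Omega)]^2$ with $\beta(\cdot,\tilde\rho_1+\tilde\rho_2)\in L^1(\Omega)$, the inequality is nontrivial only for such $\tilde\rho$; in particular $\rho\in[L^2_+(\Omega)]^2$ and $\E(\rho)=\int_\Omega\B(x,\rho)$. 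First I would attach to $f=(f_1,f_2)$ its potentials: for each $k$ let $\eta_k\in H^1_{\Gamma_{D_k}}(\Omega)$ be the unique solution of $\int_\Omega\sigma_k\nabla\eta_k\cdot\nabla\xi_k=\langle f_k,\xi_k\rangle$ for all $\xi_k\in H^1_{\Gamma_{D_k}}(\Omega)$ (existence and uniqueness by Proposition~\ref{Lduals} with $\chi=0$, $g=0$, $\pi=0$). This is precisely the PDE appearing in the statement, so it remains only to identify the state equations.

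The key reduction is the identity
\begin{equation}\label{sketchkey}
[f,h]_\Omega=\int_\Omega \eta^f\cdot h,\qquad\text{for every }h=(h_1,h_2)\in[L^2(\Omega)]^2,
\end{equation}
where $\eta^f=(\eta_1,\eta_2)$ are the potentials of $f$. To see this, note that for $h_k\in L^2(\Omega)$ the associated potential $\eta^{h_k}_k$ solves $-\nabla\cdot(\sigma_k\nabla\eta^{h_k}_k)=h_k$, so by the definition of the inner product $\langle f_k,h_k\rangle_{k,\Omega}=\int_\Omega\sigma_k\nabla\eta_k\cdot\nabla\eta^{h_k}_k$; testing the equation for $\eta^{h_k}_k$ with $\eta_k$ gives $\int_\Omega\sigma_k\nabla\eta_k\cdot\nabla\eta^{h_k}_k=\int_\Omega h_k\,\eta_k$, whence \eqref{sketchkey}. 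Applying \eqref{sketchkey} with $h=\tilde\rho-\rho\in[L^2(\Omega)]^2$, the subdifferential inequality becomes
\begin{equation}\label{sketchpoint}
\int_\Omega\B(x,\tilde\rho)-\int_\Omega\B(x,\rho)\geq\int_\Omega \eta^f\cdot(\tilde\rho-\rho),\qquad\forall\,\tilde\rho\in[L^2_+(\Omega)]^2.
\end{equation}

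Finally I would recognize \eqref{sketchpoint} as the statement that $\eta^f$ belongs to the subdifferential, computed in $[L^2(\Omega)]^2$, of the integral functional $\rho\mapsto\int_\Omega\B(x,\rho)$. Since $\B(x,\cdot)$ is a convex normal integrand and $\eta^f\in[L^2(\Omega)]^2$ (as $\eta_k\in H^1\subset L^2$), the classical pointwise characterization of subdifferentials of integral functionals (cf. \cite{Br}) gives $\eta^f(x)\in\partial\B(x,\rho(x))$ for a.e. $x\in\Omega$; conversely, such a pointwise inclusion yields \eqref{sketchpoint} by integrating the defining inequality of $\partial\B(x,\cdot)$ (this also shows $\beta(\cdot,\rho_1+\rho_2)\in L^1$, hence $\rho\in\mathrm{dom}(\E)$). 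It then suffices to invoke the proposition characterizing the graph $\partial\B(x,\cdot)$ (stated just after Theorem~\ref{texistuniq}) to rewrite $\eta^f\in\partial\B(x,\rho)$ in the equivalent form $\eta_1\vee\eta_2\in\partial\beta(x,\rho_1+\rho_2)$ and $\eta_k-\eta_1\vee\eta_2\in\partial I\!\!I_{[0,\infty)}(\rho_k)$ a.e. in $\Omega$, which is exactly the asserted state equations; together with the defining PDE for the potentials this establishes both implications.

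The main obstacle I anticipate is the equivalence \eqref{sketchpoint} $\Leftrightarrow$ pointwise inclusion, i.e. interchanging the pointwise subdifferential of $\B(x,\cdot)$ with the subdifferential of its integral: one must check the measurability and normal-integrand hypotheses on $\B$, and a qualification/properness condition ensuring that $\mathrm{dom}(\E)$ is rich enough for the classical characterization to apply. Here assumptions $(H1)$–$(H2)$ on $\beta$, together with $\beta(\cdot,0)=0$, provide the needed coercivity and properness. The other point requiring care is the conversion of the $H^{-1}$ inner product into the $L^2$ pairing; this is exactly where identity \eqref{sketchkey}, and hence the variational definition of the potentials through Proposition~\ref{Lduals}, is essential, and where the restriction of the admissible directions to the effective domain $[L^2_+(\Omega)]^2$ is used.
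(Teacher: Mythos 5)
Your proposal is correct, and its forward half --- the identity $[f,h]_\Omega=\int_\Omega\eta^f\cdot h$ converting the $H^{-1}_{\Gamma_D}$ pairing into the $L^2$ pairing against the potentials, followed by integration of the pointwise inequality defining $\partial\B(x,\cdot)$ --- is exactly the computation the paper performs. Where you genuinely diverge is in the inclusion $\partial \E\subset\A$: you obtain it directly by invoking the Rockafellar-type characterization of the subdifferential, in $[L^2(\Omega)]^2$, of the convex integral functional $\rho\mapsto\int_\Omega\B(x,\rho)$, whereas the paper never proves this direction pointwise. Instead it only establishes $\A\subset\partial\E$ (your easy direction), observes that Corollary \ref{CorEst} makes $I+\A$ surjective, concludes by Minty's theorem that the monotone graph $\A$ is maximal monotone, and hence that $\partial\E$, being a monotone graph containing it, must coincide with it. The paper's route buys freedom from the measurable-selection and normal-integrand machinery, at the price of leaning on the existence result for the resolvent equation; your route is self-contained at the level of the stationary problem but hinges on the hypotheses of the integral-functional theorem, which you leave as an ``anticipated obstacle.'' They do hold here: since $\beta\ge 0$ and $\beta(x,0)=0$ one has $\B(x,0)=0$ and $\B^*(x,0)=\sup_{s_1,s_2\ge0}\{-\beta(x,s_1+s_2)\}=0$, so both the integral functional and its conjugate are proper and the classical two-sided pointwise characterization applies. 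You should make that verification explicit, and cite a source that actually covers $x$-dependent normal integrands (e.g.\ Ekeland--T\'emam \cite{Ekeland} or Rockafellar's papers on convex integral functionals) rather than \cite{Br}, whose corresponding result is stated for $x$-independent integrands.
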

  \begin{proof}    Our aim is to prove that  
  	\begin{equation}\label{identificationA}
  	 \partial \E=	\A ,
  	\end{equation}
  	where $\A$ is the operator define in $[H^{-1}_{\Gamma_{D}}(\Omega)]^2,$ by 
  	$f\in \A(\rho)$ if and only if $f\in  [H^{-1}_{\Gamma_{D}}(\Omega)]^2,$  $\rho\in [L^2(\Omega)_+]^2$ and $\rho$ is a solution of \eqref{PDEstat}, in the sense of Proposition \ref{PH-1}.  
  	
  		Thanks to classical theory of maximal monotone graphs (cf. \cite{Br}), since $\E$ is   convex and l.s.c.   in $[H^{-1}_{\Gamma_{D}}(\Omega)]^2,$ the operator $\partial E$ defines a maximal monotone graph in $[H^{-1}_{\Gamma_{D}}(\Omega)]^2.$   Since E is both convex and lower semi-continuous in $[H^{-1}_{\Gamma_{D}}(\Omega)]^2,$ $\partial E$  itself defines a maximal monotone graph within the same space. Furthermore, Corollary \ref{CorEst} guarantees that for any $f\in [H^{-1}_{\Gamma_{D}}(\Omega)]^2,$  there exists $\rho\in  [L^2_+(\Omega)]^2,$   such that $f=\rho+\A(\rho).$   
  	 	To establish the identification in equation (\ref{identificationA}), it suffices to prove  that $\A\subset \partial \E$. Indeed,  proving this inclusion, implies that  $\A$ is a maximal monotone graph contained within $\partial \E.$  Consequently, the two graphs must coincide.
  	  To this aim, we  consider $f\in  [H^{-1}_{\Gamma_{D}}(\Omega)]^2 $  and $\rho\in [L^2(\Omega)_+]^2$ be such that $f\in \A(\rho),$ and we  prove that  
\begin{equation}\label{eqsub}
	  	   [f,z-\rho]_{\Omega} \leq \int_\Omega \beta[z] - \int_\Omega \beta[\rho]    ,\quad \hbox{ for any }z\in  [L^2(\Omega)_+]^2. 
\end{equation}
Let us consider $\phi_z$  and $\phi_\rho $ given by 
$$\phi^z_k =  \argmin_{\omega \in L^2(\Omega)^N}\left \{   \int_\Omega    \sigma_k\:\vert  \omega   \vert^2    \:  :\:  - \nabla \cdot (\sigma_k\:\omega)   =z_{k}  \hbox{ in }\Omega \hbox{ and }  \sigma_k\: \omega  \cdot \nu =0 \hbox{ on }\Gamma_{N_k }  \right\}  , $$   and 
$$\phi^\rho_k =  \argmin_{\omega \in L^2(\Omega)^N}\left \{   \int_\Omega  \sigma_k\: \vert \omega   \vert^2    \:  :\:  - \nabla \cdot (\sigma_k\:\omega)   =\rho_{k}  \hbox{ in }\Omega \hbox{ and }    \sigma_k\: \omega  \cdot \nu =0 \hbox{ on }\Gamma_{N_k }  \right\} .$$
Since $f_k$ satisfies \eqref{PDEstat},  using the definition of $\langle .,.\rangle_{k,\Omega}$, we have 
\begin{eqnarray*}
 [f,z-\rho]_{\Omega}  &=& \langle f_1,z_1-\rho_1\rangle_\Omega +  \langle f_2,z_2-\rho_2\rangle_\Omega \\  \\
&=& \int_\Omega \sigma_1 \: \nabla \eta_1\cdot( \phi^z_1 -  \phi^\rho_1)+ \int_\Omega \sigma_2 \: \nabla \eta_2\cdot( \phi^z_2 -  \phi^\rho_2)\\ \\ 
&=& \int_\Omega  \eta_1\: (z_1 - \rho_1)  + \int_\Omega   \eta_2\:(z_2- \rho_2) \\ \\ 
&=& \int_\Omega (\eta_1\: z_1 +   \eta_2\: z_2)  -  \int_\Omega (\eta_1\: \rho_1 +   \eta_2\: \rho_2).
\end{eqnarray*}
Remember that $(\eta_1,\eta_2)\in \partial \beta(.,(\rho_1,\rho_2)),$ a.e. in $Q.$  So $\eta_1\rho_1+\rho_2\eta_2= (\rho_1+\rho_2)\eta_1\vee \eta_2$ and  ${\eta_1}\vee \eta_2 \in \partial \beta(x,\rho_1 +    \rho_2),$ a.e. in $\Omega$. This implies that      
\begin{eqnarray*}
	[f,z-\rho]_{\Omega}
&=&    \int_\Omega (\eta_1\: z_1 +   \eta_2\: z_2)  -  \int_\Omega (\rho_1 +    \rho_2) {\eta_1}\vee \eta_2 \\ \\ 
&\leq & \int_\Omega  \int_\Omega (z_1 +    z_2) {\eta_1}\vee \eta_2    -  \int_\Omega (\rho_1 +    \rho_2) {\eta_1}\vee \eta_2 \\  \\ 
&\leq& \int_\Omega \beta[z] - \int_\Omega \beta[\rho]\quad = \E(z) -\E(\rho),
\end{eqnarray*}
 Thus \eqref{eqsub}. 
  
 	\end{proof}

\begin{proofth}{Proof of Theorem \ref{ThH-1}} The proof follows directly from Proposition \ref{PH-1} and the fact that   $h=f-[\nabla \cdot (\rho\: V)] \in L^2(0,T;[H^{-1}_{\Gamma_D}(\Omega)]^2).$  Then, the proof is a direct consequence of the definition of weak solution of   \eqref{PDEevol} and the characterization of $\partial \E$ in terms of PDE in Proposition \ref{PH-1}. 
\end{proofth}  
%
%
%
  
  \vspace*{10mm}

\end{document}